\newtheorem{lemma}{Lemma}[section]
\newtheorem{theorem}[lemma]{Theorem}
\newtheorem{corollary}[lemma]{Corollary}
\newtheorem{proposition}[lemma]{Proposition}
\newtheorem{remark}[lemma]{Remark}
\def\Aut{\mbox{\rm Aut}}
\def\diam{\mbox{\rm diam}}
\begin{document}
\title{\bf Proper divisor graph of a positive integer}
\author{Hitesh Kumar\footnote{The author is supported by Scholarship for Higher Education (SHE) under Innovation in Science Pursuit for Inspired Research (INSPIRE) programme of the Department of Science and Technology, Government of India.} \and Kamal Lochan Patra\and Binod Kumar Sahoo\footnote{The author is partially supported by Project No. MTR/2017/000372 of the Science and Engineering Research Board (SERB), Department of Science and Technology, Government of India.}}
\maketitle

\begin{abstract}
The proper divisor graph $\Upsilon_n$ of a positive integer $n$ is the simple graph whose vertices are the proper divisors of $n$, and in which two distinct vertices $u, v$ are adjacent if and only if $n$ divides $uv$. The graph $\Upsilon_n$ plays an important role in the study of the zero divisor graph of the ring $\mathbb{Z}_n$. In this paper, we study some graph theoretic properties of $\Upsilon_n$ and determine the graph parameters such as clique number, chromatic number, chromatic index, independence number, matching number, domination number, vertex and edge covering numbers of $\Upsilon_n$. We also determine the automorphism group of $\Upsilon_n$.
\end{abstract}

\noindent{\bf Keywords:} Graph parameters, automorphism, vertex degree, pendant vertex, cut vertex\\
{\bf AMS 2010 subject classification:} 05C07, 05C15, 05C25, 05C69, 05C70

\section{Introduction}

All graphs considered in this paper are finite and simple. We refer to the book \cite{west} for unexplained graph terminology and the basics on graph theory.
Let $G$ be a graph with vertex set $V(G)$. If $V(G)=\Phi$, then $G$ is called the {\it empty graph}. If $V(G)\neq \Phi$ but $G$ has no edge, then $G$ is called a {\it null graph}. We write $u\sim v$ for two distinct vertices $u,v\in V(G)$ if they are adjacent in $G$. The {\it degree} of a vertex $v\in V(G)$ is denoted by $\deg(v)$. A vertex of degree one is called a {\it pendant vertex} of $G$. The automorphism group of $G$ is denoted by $\Aut(G)$. A vertex $v\in V(G)$ is called a {\it cut vertex} if the induced subgraph $G-v$ of $G$ with vertex set $V(G-v)=V(G)\setminus \{v\}$ has more components than $G$.

A {\it clique} in $G$ is a subset $K$ of $V(G)$ such that the induced subgraph on $K$ is complete. The maximum size of a clique in $G$, denoted by $\omega(G)$, is called the {\it clique number} of $G$. A {\it vertex coloring} of $G$ is an assignment of colors to the vertices of $G$ such that two adjacent vertices receive different colors. The {\it chromatic number} of $G$, denoted by $\chi(G)$, is the minimum number of colors required for a vertex coloring of $G$. If $\chi(H)=\omega(H)$ for every induced subgraph $H$ of $G$, then $G$ is called a {\it perfect graph}.
An {\it edge coloring} of $G$ is an assignment of colors to the edges of $G$ such that two adjacent edges receive different colors. The {\it chromatic index} of $G$, denoted by $\chi'(G)$, is the minimum number of colors required for an edge coloring of $G$.

An {\it independent set} in $G$ is a set of vertices such that no two of them are adjacent. The maximum size of an independent set in $G$, denoted by $\alpha(G)$, is called the {\it independence number} of $G$. A {\it matching} in $G$ is a set of edges such that no two of them are adjacent. The maximum size of a matching in $G$, denoted by $\alpha'(G)$, is called the {\it matching number} of $G$. A matching $M$ in $G$ is called a {\it perfect matching} if each vertex of $G$ is incident with some edge contained in $M$. A {\it vertex cover} in $G$ is a set of vertices that contains at least one endpoint of every edge. The minimum size of a vertex cover in $G$, denoted by $\beta(G)$, is called the {\it vertex covering number} of $G$.  An {\it edge cover} in $G$ is a set of edges such that every vertex of $G$ is incident with some edge contained in it. The minimum size of an edge cover in $G$, denoted by $\beta'(G)$, is called the {\it edge covering number} of $G$. A {\it dominating set} in $G$ is set $X$ of vertices such that every vertex of $V(G)\setminus X$ is adjacent with some vertex in $X$. The minimum size of a dominating set in $G$, denoted by $\gamma(G)$, is called the {\it domination number} of $G$.

\subsection{The proper divisor graph $\Upsilon_n$}\label{div-graph}

Let $n$ be a positive integer. An integer $d$ is called a {\it proper divisor} of $n$ if $1<d<n$ and $d$ divides $n$. The {\it proper divisor graph} of $n$, denoted by $\Upsilon_n$, is the graph with vertices the proper divisors of $n$, and two distinct vertices $u$ and $v$ are adjacent if and only if $n$ divides the product $uv$.

The graph $\Upsilon_n$ was recently introduced in the paper \cite{cps}. Note that $\Upsilon_n$ is the empty graph if and only if $n=1$ or $n$ is a prime. If $n$ is composite, then $\Upsilon_n$ is a connected graph by \cite[Lemma 2.6]{cps}.

\subsubsection*{Use of the graph $\Upsilon_n$:}

Let $G$ be a graph on $m$ vertices with $V(G)=\{v_1,v_2,\ldots ,v_m\}$ and $H_{1}, H_{2}, \ldots , H_{m}$ be $m$ pairwise vertex disjoint graphs. The {\it $G$-generalized join graph} of $H_{1}, H_2\ldots , H_{m}$ is the graph obtained from $G$ by replacing each vertex $v_i$ with the graph $H_{i}$ and then adding new edges from each vertex of $H_{i}$ to every vertex of $H_{j}$, $1\leq i\neq j\leq m$, whenever $v_i$ and $v_j$ are adjacent in $G$ (such graphs are called generalized composition graphs in \cite{schwenk}). Note that if $m=2$ and $G=K_2$, then the $G$-generalized join graph of $H_{1}$ and $H_{2}$ coincides with the usual join graph $H_{1}\vee H_{2}$ of $H_{1}$ and $H_{2}$.

The notion of zero divisor graph of a commutative ring was first introduced by I. Beck in \cite{beck} by taking all elements of the ring as vertices of the graph. It was later modified by Anderson and Livingston in \cite{anderson} as the following. The {\it zero divisor graph} $\Gamma(R)$ of a commutative ring $R$ with unity is the graph with vertex set consisting of the zero divisors of $R$, and  two distinct vertices $a$ and $b$ are adjacent if and only if $ab=0$ in $R$. Note that $\Gamma(R)$ is the empty graph if $R$ is an integral domain.

Let $n$ be composite. Since every proper divisor of $n$ is a zero divisor of the ring $\mathbb{Z}_n$ of integers modulo $n$, $\Upsilon_n$ is an induced subgraph of the zero divisor graph $\Gamma(\mathbb{Z}_{n})$. The graph $\Upsilon_n$ plays an important role in \cite{cps} while studying the spectrum of the Laplacian matrix of $\Gamma(\mathbb{Z}_{n})$. By \cite[Lemma 2.7]{cps}, $\Gamma(\mathbb{Z}_{n})$  is the $\Upsilon_n$-generalized join graph of certain complete graphs and null graphs corresponding to the proper divisors of $n$. It was proved in \cite[Proposition 4.1]{cps} that $\Gamma(\mathbb{Z}_{n})$ is Laplacian integral if and only if all the eigenvalues of the $m\times m$ vertex weighted Laplacian matrix $\mathbb{L}(\Upsilon_n)$ (defined in \cite[p.275]{cps}) of $\Upsilon_n$ are integers, where $m$ is the number of proper divisors of $n$. Further, by \cite[Theorem 5.8]{cps}, the algebraic connectivity of $\Gamma(\mathbb{Z}_{n})$ coincides with the second smallest eigenvalue of $\mathbb{L}(\Upsilon_n)$ if $n$ is not a prime power nor a product of two distinct primes, and the Laplacian spectral radius of $\Gamma(\mathbb{Z}_{n})$ coincides with the largest eigenvalue of $\mathbb{L}(\Upsilon_n)$ if $n$ is not a prime power. It is also known that $\Gamma(\mathbb{Z}_{n})$ is perfect if and only if $\Upsilon_n$ is prefect (see Section \ref{chromatic}).

One can refer to the book \cite{bapat} for different kinds of matrices associated with graphs and the papers \cite{akbari, anderson, ju, mathew} for more on the zero divisor graph of $\mathbb{Z}_{n}$.

\subsection{Aim of this paper}

It is clear from the discussion in Section \ref{div-graph} that the structure of the zero divisor graph $\Gamma(\mathbb{Z}_{n})$ is completely dependent on that of the proper divisor graph $\Upsilon_n$. In this paper, we shall mainly be interested in studying the automorphism group and different parameters of $\Upsilon_n$.

In Section \ref{basic}, we discuss some basic properties of $\Upsilon_n$ such as vertex degrees, pendant vertices, diameter and cut vertices. Similarity of two positive integers is defined in Section \ref{similarity}. For two composites $m$ and $n$, we prove that the proper divisor graphs $\Upsilon_m$ and $\Upsilon_n$ are isomorphic if and only if $m$ and $n$ are similar (except for distinct $m,n\in\{p_1^3, q_1q_2\}$, where $p_1,q_1,q_2$ are primes with $q_1\neq q_2$). We then determine the automorphism group of $\Upsilon_n$ in Section \ref{auto-group}. In Section \ref{parameters}, the graph parameters clique number, chromatic number, chromatic index, independence number, matching number, domination number, vertex and edge covering numbers of $\Upsilon_n$ are determined. We also provide algorithms for coloring the edges of $\Upsilon_n$ when $n$ is a prime power or a product of distinct primes.

\section{Basic properties of $\Upsilon_n$}\label{basic}

Let $n>1$ be an integer. The number of proper divisors of $n$ is denoted by $\pi(n)$. Let $k$ denote the number of distinct prime divisors of $n$ and
$$n=p_1^{\alpha_1}p_2^{\alpha_2}\cdots p_k^{\alpha_k}$$
be the prime power factorization of $n$, where $p_1,p_2,\ldots, p_k$ are distinct primes and $\alpha_1, \alpha_2,\ldots,\alpha_k$ are positive integers. We have
\begin{equation}\label{pi-n}
\pi(n)=(\alpha_1+1)(\alpha_2+1)\cdots(\alpha_k+1)-2,
\end{equation}
which follows from the paragraph before Lemma 2.6 in \cite{cps}.

\begin{lemma}\label{lem:elem}
Let $a,b$ be two positive integers. If $a$ is a proper divisor of $b$ with $\pi(a)+1=\pi(b)$, then $b=p^t$ and $a=p^{t-1}$ for some prime $p$ and some integer $t\geq 2$.
\end{lemma}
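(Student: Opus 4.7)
The plan is to reformulate the condition in terms of the number-of-divisors function and then exploit the divisor lattice structure. Note that by equation~(\ref{pi-n}), if we let $\tau(n)$ denote the number of divisors of $n$, then $\pi(n)=\tau(n)-2$, so the hypothesis $\pi(a)+1=\pi(b)$ is equivalent to $\tau(b)=\tau(a)+1$.

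Since $a\mid b$, every divisor of $a$ is a divisor of $b$, giving an injection from the divisor set of $a$ into that of $b$. Moreover $b$ itself is a divisor of $b$ which is not a divisor of $a$ (because $a<b$). Hence $\tau(b)\geq \tau(a)+1$, and equality holds if and only if the \emph{only} divisor of $b$ that fails to divide $a$ is $b$ itself; equivalently, every divisor $d$ of $b$ with $d\neq b$ satisfies $d\mid a$.

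The main step is to use this characterization to force $b$ to be a prime power. Write $b=p_1^{\alpha_1}\cdots p_k^{\alpha_k}$ and $a=p_1^{\beta_1}\cdots p_k^{\beta_k}$ with $0\leq\beta_i\leq\alpha_i$. Suppose for contradiction that $k\geq 2$. Then for each index $j$, the divisor $d_j:=p_j^{\alpha_j}$ of $b$ is strictly smaller than $b$ (because some $\alpha_i$ with $i\neq j$ is positive), so by the characterization we must have $d_j\mid a$, forcing $\beta_j=\alpha_j$. Applying this for every $j$ yields $a=b$, contradicting that $a$ is a \emph{proper} divisor of $b$. Therefore $k=1$, i.e., $b=p^t$ for some prime $p$ and some $t\geq 2$ (the bound $t\geq 2$ comes from $\pi(b)\geq \pi(a)+1\geq 1$, so $b$ is not prime).

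Once $b=p^t$ is established, the conclusion $a=p^{t-1}$ is immediate: $a$ must be of the form $p^s$ with $1\leq s\leq t-1$, and the divisor count equation $s+1=\tau(a)=\tau(b)-1=t$ gives $s=t-1$. I do not anticipate any serious obstacle; the only subtle point is making sure the case analysis of ``which divisor of $b$ is missing from the divisors of $a$'' is set up cleanly, which is handled by the observation that this missing divisor must be $b$ itself.
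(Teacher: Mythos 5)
Your proof is correct, but it takes a different route from the paper's. The paper argues directly from the explicit formula (\ref{pi-n}): if $b$ had at least two distinct prime divisors, then lowering any exponent in passing from $b$ to a divisor $a$ drops the product $(\alpha_1+1)\cdots(\alpha_k+1)$ by at least $\prod_{i\neq j}(\alpha_i+1)\geq 2$, so $\pi(b)-\pi(a)\geq 2$, contradicting $\pi(b)=\pi(a)+1$; hence $b=p^t$, and the same formula then forces $a=p^{t-1}$. You instead translate the hypothesis into $\tau(b)=\tau(a)+1$ and exploit the inclusion of divisor sets: since $a\mid b$ and $a<b$, equality forces every divisor of $b$ other than $b$ itself to divide $a$, and then the divisors $p_j^{\alpha_j}$ (which are proper when $k\geq 2$) force $a=b$, a contradiction. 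Your key step is an order-theoretic/counting observation about the divisor lattice rather than an inequality extracted from the product formula; it makes transparent exactly which divisor of $b$ is ``missing'' from $a$, at the cost of being slightly longer, while the paper's computation is a one-line consequence of (\ref{pi-n}). All the details you give (the equality characterization, the contradiction $a=b$ for $k\geq 2$, the bound $t\geq 2$ from $b$ having a proper divisor, and the final step $s=t-1$ from $\tau(p^s)+1=\tau(p^t)$) check out.
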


\begin{proof}
If $b$ has at least two distinct prime divisors, then the formula (\ref{pi-n}) implies that $\pi(b)-\pi(a)\geq 2$, which is not possible. So $b=p^t$ for some prime $p$ and some positive integer $t$. Since $b$ has a proper divisor $a$, we must have $t\geq 2$. Then, using (\ref{pi-n}), $\pi(a)+1=\pi(b)$ implies that $a=p^{t-1}$.
\end{proof}

In the rest of the paper, for obvious reason, we shall consider proper divisor graphs $\Upsilon_n$ only when $n$ is composite. If $n=p_1^2$, then $\Upsilon_n\cong K_1$. To avoid this triviality, if $n=p_1^{\alpha_1}$, then we shall also assume that $\alpha_1\geq 3$. We thus have $|V(\Upsilon_n)|=\pi(n)\geq 2$. Then connectedness of $\Upsilon_n$ implies that the degree of every vertex is at least one. For a positive integer $m$, we denote by $[m]:=\{1,2,\ldots,m\}$.

\subsection{Vertex degrees}

In the following proposition, we determine the degree of a vertex of $\Upsilon_n$ based on a divisibility condition involving $n$ and the square of that vertex.

\begin{proposition}\label{prop:deg}
Let $u$ be a vertex of $\Upsilon_n$. Then the degree of $u$ is given by:
$$\deg(u)=\begin{cases}
\pi(u) & \text{ if } n|u^2;\\
\pi(u)+1 & \text{ if } n\nmid u^2.
\end{cases}$$
\end{proposition}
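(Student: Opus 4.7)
The plan is to translate the adjacency condition defining $\Upsilon_n$ into a single divisibility condition on neighbours, and then count the relevant divisors by a bijection with the divisors of $u$.

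First I would set $w = n/u$ (which is a positive integer since $u$ is a proper divisor of $n$). For any proper divisor $v$ of $n$, the condition $n \mid uv$ is equivalent to $w \mid v$, because $n = uw$ and $u$ automatically divides $uv$. Thus the neighbours of $u$ in $\Upsilon_n$ are exactly the proper divisors $v$ of $n$ with $v \neq u$ and $w \mid v$.

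Next I would count the set $S = \{v : v \mid n,\ w \mid v\}$. Writing $v = wd$, the condition $v \mid n = uw$ becomes $d \mid u$, so the map $d \mapsto wd$ gives a bijection between the divisors of $u$ and $S$. Hence $|S| = \pi(u) + 2$ (the divisors of $u$ being the $\pi(u)$ proper ones together with $1$ and $u$). To get $\deg(u)$, I must remove from $S$ any element that is not a proper divisor of $n$, and also remove $u$ itself. The element $n = wu$ always lies in $S$ and must be excluded; the element $1$ is never in $S$ since $w > 1$. Whether $u \in S$ is the crux: $u \in S$ iff $w \mid u$ iff $n \mid u^2$.

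Splitting into cases then finishes the count: if $n \mid u^2$, both $n$ and $u$ are distinct elements of $S$ to be discarded, giving $\deg(u) = (\pi(u)+2) - 2 = \pi(u)$; if $n \nmid u^2$, only $n$ is discarded, giving $\deg(u) = (\pi(u)+2) - 1 = \pi(u) + 1$. The only subtle step is verifying that $u \in S$ is equivalent to $n \mid u^2$, and making sure we are not double-subtracting (i.e., that $u \neq n$, which holds because $u$ is a proper divisor). Everything else is bookkeeping on the divisor lattice.
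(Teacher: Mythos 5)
Your proof is correct and follows essentially the same route as the paper: both arguments identify the neighbours of $u$ with multiples of $n/u$ dividing $n$ (equivalently, with divisors $r$ of $u$ via $v=r\frac{n}{u}$), and then exclude $n$ and, exactly when $n\mid u^2$, the vertex $u$ itself. The only difference is bookkeeping — you count all $\pi(u)+2$ divisors and subtract one or two, while the paper excludes $r=u$ from the start — which does not change the argument.
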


\begin{proof}
If $v\sim u$ in $\Upsilon_n$, then $uv=rn$ for some integer $r\geq 1$ with $r\neq u$ and so $v=r\frac{n}{u}$. Since $v$ is a proper divisor of $n$, we get that $r$ divides $u$.
Conversely, if $r$ is a divisor of $u$ with $r\neq u$, then $w=r\frac{n}{u}$ is a proper divisor of $n$ and so is a vertex of $\Upsilon_n$. Also, $w\sim u$ if $w\neq u$.

Thus, the number of neighbours of $u$ is equal to the number of positive divisors $r$ of $u$ satisfying $r\neq u$ and $u\neq r\frac{n}{u}$. It follows that $\deg(u)=\pi(u)$ if $u=r\frac{n}{u}$ for some positive divisor $r$ of $u$ with $r\neq u$, otherwise $\deg(u)=\pi(u)+1$. Then the fact that $u=r\frac{n}{u}$ if and only if $n$ divides $u^2$ completes the proof.
\end{proof}

\begin{corollary}\label{coro:deg}
Let $n=p_1^{\alpha_1}p_2^{\alpha_2}\cdots p_k^{\alpha_k}$. If $\alpha_i=\alpha_j$ for some $i,j\in [k]$, then $\deg\left(\frac{n}{p_i}\right)= \deg\left(\frac{n}{p_j}\right)$.
\end{corollary}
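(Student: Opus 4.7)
The plan is to deduce the corollary directly from Proposition~\ref{prop:deg}, by checking that the two quantities entering the degree formula, namely $\pi(n/p_i)$ and the truth value of the condition $n \mid (n/p_i)^2$, depend only on the multiset of exponents $\{\alpha_1,\ldots,\alpha_k\}$ together with the position of the ``distinguished'' exponent $\alpha_i$. Since $\alpha_i=\alpha_j$, swapping the roles of $i$ and $j$ leaves both quantities unchanged, so the degrees must coincide.

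More concretely, I would first compute $\pi(n/p_i)$. Since $n/p_i = p_1^{\alpha_1}\cdots p_i^{\alpha_i-1}\cdots p_k^{\alpha_k}$, formula~(\ref{pi-n}) gives
\[
\pi(n/p_i)=(\alpha_1+1)\cdots\alpha_i\cdots(\alpha_k+1)-2,
\]
where the $i$-th factor $(\alpha_i+1)$ in the product for $\pi(n)+2$ has been replaced by $\alpha_i$. The analogous expression holds for $\pi(n/p_j)$, with the $j$-th factor $(\alpha_j+1)$ replaced by $\alpha_j$. Since $\alpha_i=\alpha_j$, these two products coincide term by term (the two expressions differ only by a transposition of equal factors), giving $\pi(n/p_i)=\pi(n/p_j)$.

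Next I would check the divisibility condition. Observe that $(n/p_i)^2 = n^2/p_i^2$, so
\[
n \mid (n/p_i)^2 \iff p_i^2 \mid n \iff \alpha_i \geq 2.
\]
The identical reasoning gives $n \mid (n/p_j)^2 \iff \alpha_j \geq 2$. Because $\alpha_i=\alpha_j$, these two conditions are simultaneously true or simultaneously false. Applying Proposition~\ref{prop:deg} to each of $n/p_i$ and $n/p_j$, the same case of the piecewise formula applies and the value added to $\pi(\cdot)$ is the same, hence $\deg(n/p_i)=\deg(n/p_j)$.

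There is essentially no obstacle here: the result is a direct bookkeeping consequence of the degree formula in Proposition~\ref{prop:deg} together with the explicit product formula~(\ref{pi-n}) for $\pi$. The only thing worth being careful about is to keep the notation clear when indicating that one factor in the product for $\pi(n)+2$ has been reduced by $1$, so that it is visible that interchanging $i$ and $j$ (with $\alpha_i=\alpha_j$) leaves the product invariant.
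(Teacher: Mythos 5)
Your proposal is correct and follows essentially the same route as the paper: the paper's proof likewise notes that $\alpha_i=\alpha_j$ forces $\pi(n/p_i)=\pi(n/p_j)$ via formula~(\ref{pi-n}) and makes the conditions $n\mid (n/p_i)^2$ and $n\mid (n/p_j)^2$ equivalent, then invokes Proposition~\ref{prop:deg}. Your write-up simply spells out these two checks (including the clean reformulation $n\mid (n/p_i)^2\iff\alpha_i\geq 2$) in more detail than the paper does.
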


\begin{proof}
Write $u_i=\frac{n}{p_i}$ and $u_j=\frac{n}{p_j}$. Since $\alpha_i=\alpha_j$, we have that $n$ divides $u_i^2$ if and only if $n$ divides $u_j^2$, and it follows from (\ref{pi-n}) that $\pi(u_i)=\pi(u_j)$. Then the corollary follows from Proposition \ref{prop:deg}.
\end{proof}

As an application of Proposition \ref{prop:deg}, all vertices of degree one and two in $\Upsilon_n$ are determined in the following.

\begin{proposition} \label{prop:pendant}
Let $n=p_1^{\alpha_1}p_2^{\alpha_2}\cdots p_k^{\alpha_k}$. Then the following hold:
\begin{enumerate}
\item[(i)] If $k=1$ and $\alpha_1\in\{3,4\}$, then $p_1$ and $p_1^2$ are the pendant vertices of $\Upsilon_n$.
\item[(ii)] If $k=1$ and $\alpha_1\geq 5$, then $p_1$ is the only pendant vertex of $\Upsilon_n$.
\item[(iii)] If $k\geq 2$, then $p_1,p_2,\ldots,p_k$ are precisely the pendant vertices of $\Upsilon_n$.
\end{enumerate}
\end{proposition}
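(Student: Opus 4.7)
The plan is to apply Proposition \ref{prop:deg}, which says $\deg(u)\in\{\pi(u),\pi(u)+1\}$ for every vertex $u$ of $\Upsilon_n$. Consequently $\deg(u)=1$ forces $\pi(u)\in\{0,1\}$, so I would organise the argument around these two sub-cases and then sort the outcomes by the values of $k$ and $\alpha_1$ to extract the three parts (i)--(iii).

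First I would handle $\pi(u)=0$. Such a $u$ has no proper divisor, and being itself a proper divisor of $n$ forces $u=p_i$ for some $i\in[k]$. A short check shows $n\nmid p_i^2$ under our standing hypotheses: $n$ is composite with $\pi(n)\geq 2$, so $n\notin\{p_i,p_i^2\}$, yet $n\mid p_i^2$ would pin $n$ into that two-element set. Proposition \ref{prop:deg} then gives $\deg(p_i)=\pi(p_i)+1=1$ in every case, so every $p_i$ is pendant; this already supplies the ``$\supseteq$'' direction of all three parts.

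Next I would handle $\pi(u)=1$. A divisor count forces $u$ to be the square of a prime, hence $u=p_i^2$ for some $i$ with $\alpha_i\geq 2$ (and $p_i^2$ is genuinely a vertex of $\Upsilon_n$, since $p_i^2<n$ in both the prime-power and the multi-prime regimes). By Proposition \ref{prop:deg}, $p_i^2$ is pendant precisely when $n\mid p_i^4$, which forces $k=1$ and $\alpha_1\leq 4$. Combined with our standing assumption $\alpha_1\geq 3$ for the prime-power case, this extra pendant arises exactly as $u=p_1^2$ with $\alpha_1\in\{3,4\}$, delivering (i). In every other configuration $p_i^2$ has degree $\pi(p_i^2)+1=2$ and is not pendant, which handles (iii); part (ii) follows after the one-line check that for $k=1,\ \alpha_1\geq 5$ any higher prime power $p_1^j$ with $j\geq 2$ satisfies $\pi(p_1^j)=j-1\geq 1$, so Proposition \ref{prop:deg} yields $\deg(p_1^j)\geq j-1\geq 1$ and in fact $\deg(p_1^j)\geq 2$ whenever $j\geq 2$ (using $n\nmid p_1^{2j}$ when $j=2$ and $\pi(p_1^j)\geq 2$ when $j\geq 3$). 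I do not anticipate any serious obstacle: the statement is essentially a direct case analysis of Proposition \ref{prop:deg} restricted to $\pi(u)\leq 1$, and the only step deserving care is correctly extracting the range $\alpha_1\in\{3,4\}$ from the divisibility $n\mid p_1^4$.
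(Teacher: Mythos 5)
Your proposal is correct and follows essentially the same route as the paper: both reduce $\deg(u)=1$ via Proposition \ref{prop:deg} to the cases $\pi(u)=0$ (so $u$ is a prime) and $\pi(u)=1$ with $n\mid u^2$ (so $u=p_1^2$ and $n\in\{p_1^3,p_1^4\}$). The only cosmetic difference is that the paper gets the forward direction by noting directly that $\frac{n}{p_i}$ is the unique neighbour of $p_i$, whereas you deduce $\deg(p_i)=\pi(p_i)+1=1$ from Proposition \ref{prop:deg} after checking $n\nmid p_i^2$.
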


\begin{proof}
The vertex $p_i$, $i\in[k]$, has exactly one neighbour in $\Upsilon_n$, namely $\frac{n}{p_i}$. So each of $p_1,p_2,\ldots,p_k$ is a pendant vertex of $\Upsilon_n$.

Now let $u$ be a vertex of $\Upsilon_n$ with $\deg(u)= 1$. By Proposition \ref{prop:deg}, we have $\pi(u)=0$ or $1$, and $n|u^2$ if the latter holds. If $\pi(u)=0$, then $u$ must be a prime. If $\pi(u)=1$, then $u$ must be the square of a prime. In that case, $n|u^2$ implies $n\in \{p_1^3,p_1^4\}$. It can be seen that $p_1^2$ is also a pendant vertex of $\Upsilon_n$ for $n\in \{p_1^3,p_1^4\}$.
\end{proof}

\begin{corollary}\label{coro:pendant}
Let $n=p_1^{\alpha_1}p_2^{\alpha_2}\cdots p_k^{\alpha_k}$ and $l$ be the number of pendant vertices of $\Upsilon_n$. Then
$$l=\begin{cases}
1 & \text{ if } n=p_1^{\alpha_1} \text{ with } \alpha_1 \geq 5;\\
2 & \text{ if } n\in\{p_1^3, p_1^4\};\\
k & \text{ if } k\geq 2.
\end{cases}$$
\end{corollary}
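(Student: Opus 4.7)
The corollary is a direct numerical consequence of Proposition \ref{prop:pendant}, which has already classified every pendant vertex of $\Upsilon_n$ according to the prime factorization of $n$. My plan is simply to split into the same three cases and count.

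First I would dispose of the case $k \geq 2$. By part (iii) of Proposition \ref{prop:pendant}, the pendant vertices are exactly $p_1, p_2, \ldots, p_k$. Since these are distinct primes, we get $l = k$.

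Next I would treat $k = 1$, writing $n = p_1^{\alpha_1}$ with $\alpha_1 \geq 3$ (the standing assumption from the text just before the vertex-degree subsection). If $\alpha_1 \in \{3,4\}$, part (i) gives that the pendant vertices are precisely $p_1$ and $p_1^2$, which are two distinct vertices, hence $l = 2$. If $\alpha_1 \geq 5$, part (ii) gives that $p_1$ is the only pendant vertex, hence $l = 1$.

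There is no real obstacle here: the corollary is purely a restatement of Proposition \ref{prop:pendant} in the language of counts, so the only task is to verify that in each case the listed pendant vertices are pairwise distinct (which is immediate since distinct primes and the square of a prime are all distinct integers). Assembling the three counts into the piecewise formula completes the proof.
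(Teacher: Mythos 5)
Your proposal is correct and matches the paper's (implicit) reasoning: the corollary is stated in the paper without proof precisely because it is just the count of the pendant vertices classified in Proposition \ref{prop:pendant}, case by case. Your explicit verification that the listed vertices are distinct is a harmless extra detail.
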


\begin{proposition}
Let $n=p_1^{\alpha_1}p_2^{\alpha_2}\cdots p_k^{\alpha_k}$ with $\alpha_1\geq \alpha_2\geq \cdots \geq \alpha_k$ and $V_2(n)$ be the set of all degree two vertices of $\Upsilon_n$. Then the following hold:
\begin{enumerate}
\item[(i)] $V_2(p_1^3)=\Phi$; $V_2(p_1^4)=\{p_1^3\}$; $V_2(n)=\{p_1^2, p_1^3\}$ if $n\in\{p_1^5,p_1^6\}$;
\item[(ii)] $V_2(p_1p_2)=\Phi$; $V_2(p_1^2p_2)=\{p_1^2, p_1p_2\}$; $V_2(p_1^2p_2^2)=\{p_1^2, p_2^2, p_1p_2\}$;
\item[(iii)] $V_2(n)=\{p_i^2: 1\leq i\leq k, \alpha_i\geq 2\}$ if $n\notin\{p_1^3,p_1^4,p_1^5,p_1^6, p_1p_2, p_1^2p_2,p_1^2p_2^2\}$.
\end{enumerate}
\end{proposition}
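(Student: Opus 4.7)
The plan is to feed Proposition \ref{prop:deg} directly: $\deg(u)=2$ forces either (A) $\pi(u)=1$ and $n\nmid u^2$, or (B) $\pi(u)=2$ and $n\mid u^2$. By formula (\ref{pi-n}), $\pi(u)=1$ makes $u$ the square of a prime, while $\pi(u)=2$ leaves $u=p^3$ or $u=pq$ (distinct primes). So every element of $V_2(n)$ is of one of the shapes $p_i^2$, $p_i^3$, or $p_ip_j$, and the task reduces to deciding, for each shape, which $n$ satisfy the required divisibility side-condition.

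For $u=p_i^2$ (requiring $\alpha_i\geq 2$): the condition $n\nmid p_i^4$ fails exactly when $n$ is a composite divisor of $p_i^4$ strictly greater than $u$, i.e., $n\in\{p_i^3,p_i^4\}$; hence $p_i^2\in V_2(n)$ iff $\alpha_i\geq 2$ and $n\notin\{p_i^3,p_i^4\}$. For $u=p_i^3$ (requiring $\alpha_i\geq 3$): $n\mid p_i^6$ forces $n$ to be a $p_i$-power, and combined with $p_i^3<n\leq p_i^6$ gives $n\in\{p_i^4,p_i^5,p_i^6\}$. For $u=p_ip_j$ with $i\neq j$: $n\mid p_i^2p_j^2$ forces $k=2$ and $n\in\{p_i^2p_j,\, p_ip_j^2,\, p_i^2p_j^2\}$, which under the convention $\alpha_1\geq\alpha_2$ collapses to $n\in\{p_1^2p_2,\, p_1^2p_2^2\}$.

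Assembling, when $k=1$ only the $p_1^2$ and $p_1^3$ shapes can arise, and reading off the respective ranges $\alpha_1\geq 5$ and $\alpha_1\in\{4,5,6\}$ yields (i) together with the $k=1$ entry $V_2(p_1^{\alpha_1})=\{p_1^2\}$ for $\alpha_1\geq 7$ that lives inside (iii). When $k\geq 2$, $n$ is not a prime power, so every $p_i^2$ with $\alpha_i\geq 2$ automatically contributes, while a $p_ip_j$ contribution occurs precisely when $n\in\{p_1^2p_2,\, p_1^2p_2^2\}$; then directly checking $n=p_1p_2$, $n=p_1^2p_2$, and $n=p_1^2p_2^2$ one by one gives (ii), and every other multi-prime $n$ satisfies (iii). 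There is no substantive obstacle here; the only care needed is to use the ordering $\alpha_1\geq\alpha_2\geq\cdots\geq\alpha_k$ to avoid double-listing exceptions, and to remember that $\pi(u)=0$ (i.e., $u$ prime) falls under Proposition \ref{prop:pendant} and hence never contributes to $V_2$.
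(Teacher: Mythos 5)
Your proposal is correct and takes essentially the same route as the paper: apply Proposition \ref{prop:deg} to split $\deg(u)=2$ into the cases $\pi(u)=1$ with $n\nmid u^2$ and $\pi(u)=2$ with $n\mid u^2$, classify $u$ as $p_i^2$, $p_i^3$ or $p_ip_j$, and read off the exceptional values of $n$ each shape forces. The only difference is that you also obtain parts (i) and (ii) from the same uniform case analysis, which the paper dismisses as easily verified.
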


\begin{proof}
We shall prove (iii) only as the statements made in (i) and (ii) can easily be verified. So assume that $n\notin\{p_1^3,p_1^4,p_1^5,p_1^6, p_1p_2, p_1^2p_2,p_1^2p_2^2\}$. If $\alpha_i\geq 2$ for some $i\in [k]$, then $p_i^2$ is adjacent with the two vertices $\frac{n}{p_i},\; \frac{n}{p_i^2}$ only, and so $\deg(p_i^2)=2$.

Conversely, let $u$ be a vertex of $\Upsilon_n$ with $\deg(u)= 2$. If $n$ divides $u^2$, then Proposition \ref{prop:deg} gives that $\pi(u)=\deg(u)=2$, which is possible if and only if $u$ is the cube of a prime or the product of two distinct primes. This forces $n\in\{p_1^4,p_1^5,p_1^6, p_1^2p_2,p_1^2p_2^2\}$ (as $u$ is a proper divisor of $n$), a contradiction. So $n\nmid u^2$. Then $\pi(u)=1$ by Proposition \ref{prop:deg} again, which is possible if and only if $u$ is the square of a prime. Therefore, $V_2(n)=\{p_i^2: 1\leq i\leq k, \alpha_i\geq 2\}$.
\end{proof}

\begin{proposition}\label{prop:degcomp}
Let $n=p_1^{\alpha_1}p_2^{\alpha_2}\cdots p_k^{\alpha_k}$ and $u, v$ be two distinct vertices of $\Upsilon_n$. If $u$ divides $v$, then $\deg(u)\leq \deg(v)$, and equality holds if and only if $k=1,$ $u=p_1^{\lceil \alpha_1/2 \rceil-1}$ and $v=p_1^{\lceil \alpha_1/2 \rceil}.$
\end{proposition}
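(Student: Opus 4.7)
My plan is to combine the degree formula in Proposition \ref{prop:deg} with a simple counting inequality for $\pi$. Since $u\mid v$ and $u\neq v$, every divisor of $u$ is a divisor of $v$, but $v$ is a divisor of $v$ that is not a divisor of $u$ (as $v>u$). Hence $\tau(u)\leq \tau(v)-1$, and subtracting $2$ from both sides yields $\pi(u)+1\leq \pi(v)$. Together with Proposition \ref{prop:deg}, which gives $\deg(u)\leq \pi(u)+1$ and $\pi(v)\leq \deg(v)$, this chains into
\[
\deg(u)\;\leq\;\pi(u)+1\;\leq\;\pi(v)\;\leq\;\deg(v),
\]
proving the inequality.

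For the equality case, I would force all three inequalities to be tight. First, $\deg(u)=\pi(u)+1$ forces $n\nmid u^2$; third, $\deg(v)=\pi(v)$ forces $n\mid v^2$. The middle equality $\pi(u)+1=\pi(v)$, combined with the fact that $u$ is a proper divisor of $v$ (in the $1<u<v$ sense), lets me invoke Lemma \ref{lem:elem} to conclude that $v=p^t$ and $u=p^{t-1}$ for some prime $p$ and integer $t\geq 2$.

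Now $n\mid v^2=p^{2t}$ forces $n$ to be a power of $p$, so $k=1$ and $n=p_1^{\alpha_1}$ with $p=p_1$. Writing out the divisibility conditions $n\mid v^2$ and $n\nmid u^2$ becomes $\alpha_1\leq 2t$ and $\alpha_1>2(t-1)$, i.e. $2t-1\leq \alpha_1\leq 2t$, which is equivalent to $t=\lceil \alpha_1/2\rceil$. This gives exactly $u=p_1^{\lceil\alpha_1/2\rceil-1}$ and $v=p_1^{\lceil\alpha_1/2\rceil}$, as claimed.

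For the converse direction, I would simply verify that when $n=p_1^{\alpha_1}$ with $u$ and $v$ as above, the three equalities indeed hold: $\pi(u)=t-2$, $\pi(v)=t-1$, $n\nmid p_1^{2(t-1)}$ (since $\alpha_1\geq 2t-1$), and $n\mid p_1^{2t}$ (since $\alpha_1\leq 2t$), so Proposition \ref{prop:deg} gives $\deg(u)=t-1=\deg(v)$. The only subtlety I foresee is the clean application of Lemma \ref{lem:elem}, which requires checking that $u$ really is a proper divisor of $v$ in the sense $1<u<v$; this is immediate since $u>1$ as a vertex of $\Upsilon_n$ and $u\neq v$ with $u\mid v$.
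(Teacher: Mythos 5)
Your proposal is correct and follows essentially the same route as the paper: the chain $\deg(u)\leq\pi(u)+1\leq\pi(v)\leq\deg(v)$ via Proposition \ref{prop:deg}, then Lemma \ref{lem:elem} plus the divisibility conditions $n\nmid u^2$, $n\mid v^2$ to force $n=p_1^{\alpha_1}$ and $t=\lceil\alpha_1/2\rceil$, with the same direct verification of the converse. The only differences (your explicit divisor-counting justification of $\pi(u)+1\leq\pi(v)$ and phrasing the conclusion as $2t-1\leq\alpha_1\leq 2t$ instead of comparing with $\lceil\alpha_1/2\rceil$ directly) are cosmetic.
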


\begin{proof}
Since $u$ is a proper divisor of $v$, we have $\pi(u)+1\leq \pi(v)$. Then, using Proposition \ref{prop:deg}, we get
\begin{equation}\label{eqn-1}
\deg(u)\leq \pi(u)+1\leq \pi(v)\leq \deg(v).
\end{equation}

If $k=1$, $u=p_1^{\lceil \alpha_1/2 \rceil-1}$ and $v=p_1^{\lceil \alpha_1/2 \rceil}$, then observe that $n\nmid u^2$ and $n|v^2$. Applying Proposition \ref{prop:deg}, we get $\deg(u)=\pi(u)+1=\lceil \alpha_1/2 \rceil-1=\pi(v)=\deg(v)$. Conversely, suppose that $\deg(u)= \deg(v)$. Then (\ref{eqn-1}) gives
$$\deg(u)=\pi(u)+1= \pi(v)=\deg(v).$$
It follows that both $u$ and $v$ are powers of a prime by Lemma \ref{lem:elem}, and that $n\nmid u^2$ and $n|v^2$ by Proposition \ref{prop:deg}. Thus we have the following:
\begin{enumerate}
\item[(i)] $n$ itself is a prime power and so $n=p_1^{\alpha_1}$.
\item[(ii)] $u=p_1^{t-1}$ and $v=p_1^{t}$ for some integer $t$ with $2\leq t\leq \alpha_1-1$ (Lemma \ref{lem:elem}).
\item[(iii)] $p_1^{\lceil \alpha_1/2 \rceil}\nmid u$ as $n\nmid u^2$, and $p_1^{\lceil \alpha_1/2 \rceil}| v$ as $n\mid v^2$.
\end{enumerate}
It follows from (ii) and (iii) that $t-1\leq \lceil \alpha_1/2 \rceil -1$ and $t\geq \lceil \alpha_1/2 \rceil$. This gives $t=\lceil \alpha_1/2 \rceil$, thus completing the proof.
\end{proof}

\begin{remark}\label{remark}
Recall that a graph is called nearly irregular if it contains exactly one pair of vertices with equal degree. Let $G$ be a connected nearly irregular graph on $m\geq 2$ vertices. If $u$ and $v$ are the two vertices of $G$ with equal degree, then $\deg(u)=\deg(v)=\lceil\frac{m+1}{2}\rceil -1$. This follows from Proposition \ref{prop:degcomp} with $n=p_1^{m+1}$ and the fact that there is precisely one connected nearly irregular graph on $m$ vertices up to isomorphism (\cite[Theorem 1.12]{CZ}).
\end{remark}

\subsection{Diameter}

The following proposition determines the diameter of $\Upsilon_n$ which is denoted by $\diam(\Upsilon_n)$.

\begin{proposition}
Let $n=p_1^{\alpha_1}p_2^{\alpha_2}\cdots p_k^{\alpha_k}$. Then
$$\diam(\Upsilon_n)=
\begin{cases}
1 & \text{ if } n\in\{p_1^3, p_1p_2\};\\
2 & \text{ if } n=p_1^{\alpha_1} \text{ with } \alpha_1\geq 4;\\
3 & \text{otherwise.}
\end{cases}$$
\end{proposition}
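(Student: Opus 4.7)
The plan is to handle the three cases of the formula separately. The diameter-$1$ cases are immediate: for $n=p_1^3$ the only vertices are $p_1,p_1^2$ with $p_1^3\mid p_1\cdot p_1^2$, and for $n=p_1p_2$ the only vertices are $p_1,p_2$ with $p_1p_2\mid p_1p_2$. For $n=p_1^{\alpha_1}$ with $\alpha_1\geq 4$, I would observe that $n/p_1=p_1^{\alpha_1-1}$ is adjacent to every other proper divisor $p_1^i$ (since $(\alpha_1-1)+i\geq\alpha_1$ whenever $i\geq 1$), so $\diam(\Upsilon_n)\leq 2$; equality follows because $p_1\not\sim p_1^2$ when $\alpha_1\geq 4$.

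The heart of the proof is the remaining case, in which $k\geq 2$ and $n\neq p_1p_2$. For the upper bound $\diam(\Upsilon_n)\leq 3$, I would introduce the set $C=\{n/p_1,\ldots,n/p_k\}$. First, $C$ is a clique in $\Upsilon_n$: for distinct $i,j\in[k]$ we have $p_ip_j\mid n$, and so $n\mid (n/p_i)(n/p_j)$. Second, every vertex of $\Upsilon_n$ either lies in $C$ or is adjacent to some vertex of $C$: if $u\notin C$ and $p_i$ is any prime dividing $u$, then $p_i\mid u$ gives $n\mid u\cdot(n/p_i)$, and $u\neq n/p_i$, so $u\sim n/p_i$. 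Combining these two observations, any two vertices $u,v$ can be joined by a path of length at most $3$, namely $u\to c_u\to c_v\to v$ where $c_u,c_v\in C$ (with the convention that $c_u=u$ whenever $u\in C$, and similarly for $v$), using the clique edges inside $C$ to bridge $c_u$ and $c_v$.

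For the matching lower bound $\diam(\Upsilon_n)\geq 3$, I would invoke Proposition \ref{prop:pendant}: since $k\geq 2$, the vertices $p_1$ and $p_2$ are pendant in $\Upsilon_n$, with unique neighbors $n/p_1$ and $n/p_2$, which are distinct. The exclusion $n\neq p_1p_2$ gives $n\nmid p_1p_2$, hence $p_1\not\sim p_2$; and since their unique neighbors differ, they share no common neighbor either. Therefore $d(p_1,p_2)\geq 3$, which combined with the previous paragraph yields $\diam(\Upsilon_n)=3$.

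The only delicate point I anticipate is the bookkeeping in the bridge argument, especially the sub-case where one (or both) of $u,v$ already belongs to $C$ or where the chosen $c_u,c_v$ coincide; otherwise the proof reduces to routinely unpacking the divisibility condition $n\mid uv$ defining the edges of $\Upsilon_n$.
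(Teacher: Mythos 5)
Your proof is correct and takes essentially the same route as the paper: $K_2$ for the two small cases, the vertex $\frac{n}{p_1}=p_1^{\alpha_1-1}$ adjacent to everything when $n=p_1^{\alpha_1}$ with $\alpha_1\geq 4$, and the pendant vertices $p_1,p_2$ with distinct unique neighbours $\frac{n}{p_1},\frac{n}{p_2}$ forcing distance $3$ in the remaining case. The only (harmless) difference is that you establish $\diam(\Upsilon_n)\leq 3$ directly via the dominating clique $\left\{\frac{n}{p_1},\ldots,\frac{n}{p_k}\right\}$, with the bookkeeping handled correctly, whereas the paper simply cites the connectedness argument of \cite{cps} for this bound.
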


\begin{proof}
It follows from the proof of connectedness of $\Upsilon_n$ in \cite[Lemma 2.6]{cps} that $\diam(\Upsilon_n)\leq 3$. If $n\in\{p_1^3, p_1p_2\}$, then $\Upsilon_n\cong K_2$ and so $\diam(\Upsilon_n)=1$. Suppose that $n=p_1^{\alpha_1}$ with $\alpha_1\geq 4$. The vertex $p_1^{\alpha_1-1}$ is adjacent with all other vertices of $\Upsilon_n$. Further, $\alpha_1\geq 4$ implies that $p_1\sim p_1^{\alpha_1-1}\sim p_1^2$ is the shortest path between $p_1$ and $p_1^2$. Hence $\diam(\Upsilon_n)=2$.

Now consider $k\geq 2$ with $n\neq p_1p_2$. By Proposition \ref{prop:pendant}(iii), $p_1$ and $p_2$ are pendant vertices of $\Upsilon_n$. We have $p_1\sim \frac{n}{p_1}$ and $p_2\sim \frac{n}{p_2}$. The vertices $p_1,p_2, \frac{n}{p_1}, \frac{n}{p_2}$ are pairwise distinct and $p_1\sim \frac{n}{p_1}\sim \frac{n}{p_2}\sim p_2$ is the shortest path between $p_1$ and $p_2$.  Therefore, $\diam(\Upsilon_n)=3$.
\end{proof}

\subsection{Minimum and maximum degrees}

By Corollary \ref{coro:pendant}, $\Upsilon_n$ has at least one pendant vertex and hence the minimum degree of $\Upsilon_n$ is one. The vertices of maximum degree in $\Upsilon_n$ are determined in Proposition \ref{prop:maxdeg} below. We need the following lemma.

\begin{lemma}\label{lem:maxdeg}
Let $n=p_1^{\alpha_1}p_2^{\alpha_2}\cdots p_k^{\alpha_k}$ with $k\geq 2$ and $\alpha_1\geq \alpha_2\geq \cdots \geq \alpha_k$. Set $u_i:=\frac{n}{p_i}$ for $i\in [k]$. Then $\deg(u_1)\geq \deg(u_j)$ for $2\leq j \leq k$, and equality holds if and only if $\alpha_1=\alpha_j$ or $n=p_1^2p_2$.
\end{lemma}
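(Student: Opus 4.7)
The plan is to combine Proposition \ref{prop:deg} with a uniform formula for $\pi(u_i)$ and then carry out a short case analysis on the size of $\alpha_j$. Since $u_i^2 = n^2/p_i^2$, we have $n\mid u_i^2$ if and only if $p_i^2\mid n$, i.e.\ $\alpha_i\geq 2$. Hence Proposition \ref{prop:deg} yields
\begin{equation*}
\deg(u_i)=\begin{cases}\pi(u_i) & \text{if }\alpha_i\geq 2,\\ \pi(u_i)+1 & \text{if }\alpha_i=1.\end{cases}
\end{equation*}
A direct divisor count of $u_i=n/p_i$, valid both when $\alpha_i\geq 2$ (so $u_i$ still uses all $k$ primes) and when $\alpha_i=1$ (so the prime $p_i$ drops out), gives the unified identity
\begin{equation*}
\pi(u_i)+2=\frac{\alpha_i}{\alpha_i+1}\prod_{s=1}^{k}(\alpha_s+1),
\end{equation*}
so $\pi(u_i)$ is a strictly increasing function of $\alpha_i$ when the remaining exponents are fixed.

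Now I would split on $\alpha_j$. If $\alpha_j\geq 2$, then $\alpha_1\geq 2$ as well, both degrees reduce to the corresponding $\pi$-values, and the monotonicity above gives $\deg(u_1)\geq \deg(u_j)$ with equality iff $\alpha_1=\alpha_j$. If $\alpha_j=1$ and $\alpha_1=1$, then every $\alpha_i=1$ and both degrees are trivially equal, which is again the $\alpha_1=\alpha_j$ case. The remaining case $\alpha_j=1$, $\alpha_1\geq 2$ is the decisive one: substituting the values $\alpha_j+1=2$ and $\alpha_1+1$ into the formula for $\pi(u_i)$ and simplifying yields
\begin{equation*}
\deg(u_1)-\deg(u_j)=\pi(u_1)-\pi(u_j)-1=(\alpha_1-1)\!\!\prod_{s\in[k]\setminus\{1,j\}}\!\!(\alpha_s+1)\;-\;1.
\end{equation*}
Setting $Q:=\prod_{s\in[k]\setminus\{1,j\}}(\alpha_s+1)$ (interpreted as $1$ when the index set is empty), the inequality $(\alpha_1-1)Q\geq 1$ holds because $\alpha_1-1\geq 1$ and $Q\geq 1$, so $\deg(u_1)\geq\deg(u_j)$ in all cases.

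The only delicate point is pinning down equality exactly. From the display, $(\alpha_1-1)Q=1$ forces both $\alpha_1=2$ and $Q=1$; since every factor $\alpha_s+1$ contributing to $Q$ is at least $2$, the condition $Q=1$ forces $[k]\setminus\{1,j\}=\emptyset$, i.e.\ $k=2$. Combined with $\alpha_j=1$, $j=2$, and $\alpha_1=2$, this gives exactly $n=p_1^2p_2$. Thus the extra equality case in the statement corresponds precisely to $n=p_1^2p_2$, and no hidden coincidences can occur. The main obstacle is really just keeping the equality bookkeeping clean in this final case; once the explicit formula for $\pi(u_i)$ is in hand, the rest is mechanical.
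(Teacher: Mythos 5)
Your proof is correct and takes essentially the same route as the paper's: both arguments rest on Proposition \ref{prop:deg}, the observation that $n\mid u_i^2$ exactly when $\alpha_i\geq 2$, and the divisor-count formula for $\pi(u_i)$, reducing everything to the comparison of $\frac{\alpha_1}{\alpha_1+1}$ with $\frac{\alpha_j}{\alpha_j+1}$. The only difference is organizational: where the paper runs a chain of inequalities and then analyzes when all of them are tight (forcing $k=2$, $\alpha_2=1$, $\alpha_1=2$), you compute the exact difference $\deg(u_1)-\deg(u_j)=(\alpha_1-1)Q-1$ in the lone nontrivial case, which makes the equality bookkeeping a bit more transparent but is the same underlying argument.
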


\begin{proof}
We have $\alpha_1\geq \alpha_j$. If $\alpha_1=\alpha_j$, then $\deg(u_1)=\deg(u_j)$ by Corollary \ref{coro:deg}. Assume that $\alpha_1>\alpha_j$. Then using Proposition \ref{prop:deg} and the inequality $(\alpha_1+1)\alpha_j < \alpha_1(\alpha_j+1)$, we get that
\begin{align}
\deg(u_j)\leq \pi(u_j)+1&=(\alpha_1+1)\cdots(\alpha_{j-1}+1)\alpha_j(\alpha_{j+1}+1)\cdots(\alpha_k+1)-1\nonumber \\
&\leq \alpha_1(\alpha_2+1)\cdots(\alpha_j+1)\cdots(\alpha_k+1)-2\label{inequ-1}\\
&=\pi(u_1)\leq \deg(u_1).\nonumber
\end{align}
If $k\geq 3$, then the inequality (\ref{inequ-1}) is strict and this gives $\deg(u_1)> \deg(u_j)$. Suppose that $\deg(u_1)= \deg(u_j)$ with $\alpha_1 >\alpha_j$. Then $k=2$, $j=2$ and we must have
$$\deg(u_2)=\pi(u_2)+1=(\alpha_1+1)\alpha_2 -1 =  \alpha_1(\alpha_2+1) -2 =\pi(u_1)=\deg(u_1).$$
It follows that $n\nmid u_2^2$ by Proposition \ref{prop:deg} and that $\alpha_1=\alpha_2 +1$. The former implies that $\alpha_2=1$ and so $\alpha_1=2$. This given $n=p_1^2p_2$. If $n=p_1^2p_2$, then it can be seen that $\deg(u_1)=2= \deg(u_2)$.
\end{proof}

\begin{proposition}\label{prop:maxdeg}
Let $n=p_1^{\alpha_1}p_2^{\alpha_2}\cdots p_k^{\alpha_k}$ with $\alpha_1\geq \alpha_2\geq \cdots \geq \alpha_k$. Then the following hold:
\begin{enumerate}
\item[(i)] If $n=p_1^3$, then both the vertices $p_1$ and $p_1^2$ of $\Upsilon_n$ are of maximum degree.
\item[(ii)] If $n=p_1^2p_2$, then $\frac{n}{p_1}=p_1p_2$ and $\frac{n}{p_2}=p_1^2$ are the vertices of $\Upsilon_n$ of maximum degree.
\item[(iii)] If $n\notin\{p_1^3,p_1^2p_2\}$, then $\frac{n}{p_1},\ldots, \frac{n}{p_t}$ are precisely the vertices of $\Upsilon_n$ of maximum degree, where $t\in [k]$ is the largest integer such that $\alpha_t=\alpha_1$.
\end{enumerate}
\end{proposition}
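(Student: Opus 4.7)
\medskip

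The plan is to handle the three cases separately, treating (i) and (ii) as small direct verifications and reducing (iii) to the two earlier results on vertex degrees.

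For (i), the graph $\Upsilon_{p_1^3}$ has only the two vertices $p_1,p_1^2$ and they are adjacent, so both trivially have maximum degree. For (ii), I would just list the four proper divisors of $p_1^2p_2$ and apply Proposition \ref{prop:deg}: the vertices $p_1,p_2$ are pendant while $\frac{n}{p_1}=p_1p_2$ and $\frac{n}{p_2}=p_1^2$ both have degree two, so they are the vertices of maximum degree.

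The substance is in (iii), where I would argue in two halves. First, to show that $\frac{n}{p_1},\ldots,\frac{n}{p_t}$ all share the same degree and that this degree strictly dominates $\deg\bigl(\frac{n}{p_j}\bigr)$ for $j>t$: Corollary \ref{coro:deg} gives $\deg\bigl(\frac{n}{p_1}\bigr)=\deg\bigl(\frac{n}{p_i}\bigr)$ for every $i\in[t]$ since $\alpha_i=\alpha_1$, and for $j>t$ we have $\alpha_1>\alpha_j$, so Lemma \ref{lem:maxdeg} together with the hypothesis $n\neq p_1^2p_2$ (and the hypothesis that we are in the case $k\geq 2$ of part (iii)) yields the strict inequality $\deg\bigl(\frac{n}{p_1}\bigr)>\deg\bigl(\frac{n}{p_j}\bigr)$.

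Second, I need to rule out vertices $v$ not of the form $\frac{n}{p_i}$. The key observation is that if $v\in V(\Upsilon_n)$ and $v\neq \frac{n}{p_i}$ for all $i$, then $n/v$ is a composite proper divisor of $n$, hence $n/v$ has some prime factor $p_i$ with $n/v\neq p_i$; equivalently, $v$ is a proper divisor of $\frac{n}{p_i}$. Applying Proposition \ref{prop:degcomp} to the strict divisibility $v\mid \frac{n}{p_i}$ gives $\deg(v)\leq \deg\bigl(\frac{n}{p_i}\bigr)\leq \deg\bigl(\frac{n}{p_1}\bigr)$, and the equality case of that proposition would force $k=1$ with $v=p_1^{\lceil\alpha_1/2\rceil-1}$ and $\frac{n}{p_i}=p_1^{\lceil\alpha_1/2\rceil}$, i.e.\ $\alpha_1-1=\lceil\alpha_1/2\rceil$, which only happens when $\alpha_1=3$; this is excluded in (iii), so the inequality is strict. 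This also covers $k=1$ with $\alpha_1\geq 4$, giving $t=1$ and $\frac{n}{p_1}=p_1^{\alpha_1-1}$ as the unique vertex of maximum degree.

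The main obstacle is really bookkeeping: checking that the equality case of Proposition \ref{prop:degcomp} is genuinely excluded by the hypothesis of (iii) (so that $v$ being a proper divisor of some $\frac{n}{p_i}$ forces a strict degree drop), and making sure the case $k=1$, $\alpha_1\geq 4$ is handled uniformly with the case $k\geq 2$, $n\neq p_1^2p_2$. Once that is pinned down, combining the two halves gives the precise list $\bigl\{\tfrac{n}{p_1},\ldots,\tfrac{n}{p_t}\bigr\}$ of maximum-degree vertices.
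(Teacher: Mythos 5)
Your proof is correct and follows essentially the same route as the paper: it rests on the same ingredients (Corollary \ref{coro:deg}, Lemma \ref{lem:maxdeg}, and the equality case of Proposition \ref{prop:degcomp}) to confine the maximum-degree vertices to $\bigl\{\frac{n}{p_1},\ldots,\frac{n}{p_t}\bigr\}$. The only cosmetic difference is that you treat $k=1$, $\alpha_1\geq 4$ uniformly through the equality analysis $\alpha_1-1=\lceil\alpha_1/2\rceil$, whereas the paper disposes of that case by observing directly that $\frac{n}{p_1}$ is the unique vertex adjacent to all others.
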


\begin{proof}
The statements made in (i) and (ii) can easily be verified. We shall prove (iii). If $n=p_1^{\alpha_1}$ with $\alpha_1\geq 4$, then $\frac{n}{p_1}$ is the only vertex that is adjacent with all other vertices of $\Upsilon_n$ and so it is the only vertex of $\Upsilon_n$ of maximum degree.

Now consider $k\geq 2$. Let $u\in V(\Upsilon_n)$ be of maximum degree. Set $u_i:=\frac{n}{p_i}$ for $i\in [k]$. Since $u$ is a divisor of $u_j$ for some $j\in [k]$, we have $\deg(u)\leq \deg(u_j)$ by Proposition \ref{prop:degcomp}. The maximality of $\deg(u)$ then gives that $\deg(u)= \deg(u_j)$. Since $k\geq 2$, Proposition \ref{prop:degcomp} again implies that $u=u_j$.
Thus, the vertices of $\Upsilon_n$ of maximum degree are contained in $\{u_1,u_2,\ldots,u_k\}$. By the definition of $t$, we have $\alpha_1=\alpha_2=\cdots=\alpha_t$ and $\alpha_1>\alpha_j$ for $t+1\leq j\leq k$. Since $n\neq p_1^2p_2$, Lemma \ref{lem:maxdeg} gives that $u_1,u_2,\ldots,u_t$ are precisely the vertices of maximum degree in $\Upsilon_n$.
\end{proof}

The number of vertices of $\Upsilon_n$ with minimum degree one (that is, pendant vertices) is given in Corollary \ref{coro:pendant}. As a consequence of Proposition \ref{prop:maxdeg}, we have the following result on the number of vertices of $\Upsilon_n$ having maximum degree.

\begin{corollary}\label{cor:maxdegcount}
Let $n=p_1^{\alpha_1}p_2^{\alpha_2}\cdots p_k^{\alpha_k}$ with $\alpha_1\geq \alpha_2\geq \cdots \geq \alpha_k$ and $L$ be the number of vertices of $\Upsilon_n$ with maximum degree. If $t\in [k]$ is the largest integer with $\alpha_t=\alpha_1$, then
$$L=\begin{cases}
2 & \text{ if }n\in\{p_1^3, p_1^2p_2\};\\
t & \text{ otherwise. }
\end{cases}$$
\end{corollary}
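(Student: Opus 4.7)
The plan is simply to read off the count of maximum-degree vertices from the three cases of Proposition \ref{prop:maxdeg}, noting that the listed vertices are pairwise distinct.

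First I would handle the two exceptional small cases directly. For $n = p_1^3$, Proposition \ref{prop:maxdeg}(i) identifies the two distinct vertices $p_1$ and $p_1^2$ as the vertices of maximum degree, so $L = 2$. For $n = p_1^2 p_2$, Proposition \ref{prop:maxdeg}(ii) identifies $p_1 p_2$ and $p_1^2$ as the maximum-degree vertices; these are distinct because $p_1 \neq p_2$, so $L = 2$.

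Next I would treat the generic case $n \notin \{p_1^3, p_1^2 p_2\}$. By Proposition \ref{prop:maxdeg}(iii), the vertices of maximum degree are exactly $\frac{n}{p_1}, \frac{n}{p_2}, \ldots, \frac{n}{p_t}$, where $t \in [k]$ is the largest integer with $\alpha_t = \alpha_1$. Since $p_1, \ldots, p_t$ are distinct primes, the quotients $\frac{n}{p_i}$ are pairwise distinct (if $\frac{n}{p_i} = \frac{n}{p_j}$ then $p_i = p_j$). Hence $L = t$. This also covers the sub-case $k = 1$ with $\alpha_1 \geq 4$, for which $t = 1$ and the unique maximum-degree vertex is $\frac{n}{p_1}$, consistent with the formula.

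There is no real obstacle; the content of the corollary is essentially bookkeeping on top of Proposition \ref{prop:maxdeg}, the only small point being to observe that the vertices $\frac{n}{p_1}, \ldots, \frac{n}{p_t}$ are distinct and that the formula $L = t$ degenerates correctly to $L = 1$ in the prime-power case $n = p_1^{\alpha_1}$ with $\alpha_1 \geq 4$.
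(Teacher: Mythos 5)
Your proposal is correct and matches the paper, which states this corollary as an immediate consequence of Proposition \ref{prop:maxdeg} with exactly the same case-by-case bookkeeping (including the observation that $\frac{n}{p_1},\ldots,\frac{n}{p_t}$ are distinct and that the prime-power case gives $t=1$).
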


\begin{corollary}\label{coro:maxdeg}
Let $n=p_1^{\alpha_1}p_2^{\alpha_2}\cdots p_k^{\alpha_k}$ with $\alpha_1\geq \alpha_2\geq \cdots \geq \alpha_k$. Then the maximum degree $\Delta\left(\Upsilon_n\right)$ of $\Upsilon_n$ is given by:
$$\Delta\left(\Upsilon_n\right)=\deg\left(\frac{n}{p_1}\right)=\begin{cases}
\pi\left(\frac{n}{p_1}\right) & \text{ if } k=1, \text{ or } k\geq 2 \text{ with }  \alpha_1 \geq 2;\\
\pi\left(\frac{n}{p_1}\right)+1 & \text{ otherwise. }
\end{cases}$$
\end{corollary}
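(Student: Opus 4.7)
The plan is to combine Proposition \ref{prop:maxdeg} with Proposition \ref{prop:deg}. The first proposition already tells us \emph{which} vertices realize the maximum degree, so the main task is to verify that $\frac{n}{p_1}$ is always among them, and then to compute its degree explicitly.

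First I would argue that $\frac{n}{p_1}$ is a vertex of maximum degree in every case. If $n=p_1^3$, then $\frac{n}{p_1}=p_1^2$ is one of the two maximum-degree vertices listed in Proposition \ref{prop:maxdeg}(i). If $n=p_1^2 p_2$, then $\frac{n}{p_1}=p_1p_2$ is one of the two maximum-degree vertices listed in part (ii). In all remaining cases, part (iii) applies (with $t\geq 1$) and $\frac{n}{p_1}$ is among the listed maximum-degree vertices. This establishes $\Delta(\Upsilon_n)=\deg\left(\frac{n}{p_1}\right)$ across the board.

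Next I would apply Proposition \ref{prop:deg} to the vertex $u=\frac{n}{p_1}$. The divisibility condition to check is whether $n\mid u^2$, i.e.\ whether $n\mid \frac{n^2}{p_1^2}$, which is equivalent to $p_1^2\mid n$, i.e.\ $\alpha_1\geq 2$. Splitting into cases: if $k=1$, then $\alpha_1\geq 3$ by the standing assumption, so $\alpha_1\geq 2$ and $n\mid u^2$, giving $\deg(u)=\pi(u)$; if $k\geq 2$ and $\alpha_1\geq 2$, the same conclusion $\deg(u)=\pi(u)$ follows; finally, if $k\geq 2$ and $\alpha_1=1$ (so every $\alpha_i=1$ and $n$ is squarefree), then $n\nmid u^2$ and Proposition \ref{prop:deg} gives $\deg(u)=\pi(u)+1$. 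This matches the piecewise formula in the statement.

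There is no real obstacle here: the work was already done in Proposition \ref{prop:maxdeg} (to identify the maximum-degree vertex) and Proposition \ref{prop:deg} (to evaluate its degree). The only thing to be a little careful about is not to forget the degenerate small cases $n\in\{p_1^3,p_1^2p_2\}$ when invoking Proposition \ref{prop:maxdeg}, and to notice that in both of these $\frac{n}{p_1}$ is still one of the maximum-degree vertices, so the unified formula $\Delta(\Upsilon_n)=\deg\!\left(\frac{n}{p_1}\right)$ is valid without any exceptions.
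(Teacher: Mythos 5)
Your proof is correct and follows exactly the route the paper takes: the paper's own proof is just the one-line remark that the corollary follows from Propositions \ref{prop:deg} and \ref{prop:maxdeg}, and your write-up simply spells out those two steps (checking that $\frac{n}{p_1}$ is always among the maximum-degree vertices, including the cases $n\in\{p_1^3,p_1^2p_2\}$, and that $n\mid\left(\frac{n}{p_1}\right)^2$ if and only if $\alpha_1\geq 2$) in full detail.
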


\begin{proof}
This follows from Propositions \ref{prop:deg} and \ref{prop:maxdeg}.
\end{proof}

\subsection{Cut vertices}

Let $v$ be a vertex of $\Upsilon_n$. For two disjoint nonempty subsets $A$ and $B$ of $V(\Upsilon_n -v)=V(\Upsilon_n)\setminus\{v\}$, we say that $(A,B)$ is a {\it separation} of $\Upsilon_n -v$ if $V(\Upsilon_n -v) =A\cup B$ and there is no edge of $\Upsilon_n -v$ with one endpoint in $A$ and the other in $B$. Since $\Upsilon_n$ is connected, we have that $v$ is a cut vertex of $\Upsilon_n$ if and only if there exists a separation of $\Upsilon_n -v$.

If $n\in \{p_1^3, p_1p_2\}$, then $\Upsilon_n\cong K_2$ has no cut vertex. We shall find the cut vertices of $\Upsilon_n$ for the remaining values of $n$.

\begin{proposition}
Let $n=p_1^{\alpha_1}p_2^{\alpha_2}\cdots p_k^{\alpha_k}$ with $n\notin\{p_1^3, p_1p_2\}$. Then $\frac{n}{p_1}, \frac{n}{p_2},\ldots, \frac{n}{p_k}$ are precisely the cut vertices of $\Upsilon_n$.
\end{proposition}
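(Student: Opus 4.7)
The plan is to prove both inclusions separately. For the easy direction, I would show that each $\frac{n}{p_i}$ is a cut vertex: the prime $p_i$ is a pendant vertex of $\Upsilon_n$ whose unique neighbour is $\frac{n}{p_i}$ (by Proposition~\ref{prop:pendant}, together with the trivial observation $p_1 \sim p_1^{\alpha_1-1}$ when $k=1$). So removing $\frac{n}{p_i}$ isolates $p_i$, and since the excluded cases $n \in \{p_1^3, p_1p_2\}$ are precisely those with $\pi(n)=2$, we have $\pi(n)\geq 3$ and hence $\Upsilon_n - \frac{n}{p_i}$ contains at least one further vertex, giving a separation.

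For the converse, the strategy is to exhibit an obviously connected spanning subgraph of $\Upsilon_n - v$ whenever $v\notin\{\frac{n}{p_1},\ldots,\frac{n}{p_k}\}$. The structural fact I would record first is that for distinct $i,j\in[k]$ the vertices $\frac{n}{p_i}$ and $\frac{n}{p_j}$ are adjacent in $\Upsilon_n$, because $n\mid \frac{n^2}{p_ip_j}$ holds as $p_ip_j\mid n$. Equally useful is: a proper divisor $u\neq \frac{n}{p_i}$ is adjacent to $\frac{n}{p_i}$ precisely when $p_i\mid u$ (direct from the definition, cf.\ the reasoning in Proposition~\ref{prop:deg}).

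Using these, when $k\geq 2$ the set $\{\frac{n}{p_1},\ldots,\frac{n}{p_k}\}$ forms a clique contained in $V(\Upsilon_n - v)$. Any other vertex $u$ is divisible by some prime $p_i$ dividing $n$; if $u$ is itself of the form $\frac{n}{p_j}$ it lies in this clique, and otherwise it is adjacent to $\frac{n}{p_i}$ for any $i$ with $p_i\mid u$. Hence every vertex of $\Upsilon_n - v$ is joined to the clique, and $\Upsilon_n - v$ is connected. When $k=1$, the hypothesis $n\neq p_1^3$ forces $\alpha_1\geq 4$, and $\frac{n}{p_1}=p_1^{\alpha_1-1}$ is a universal vertex of $\Upsilon_n$ (every proper divisor $p_1^a$ satisfies $a+(\alpha_1-1)\geq \alpha_1$); removing any $v\neq \frac{n}{p_1}$ therefore leaves a vertex adjacent to all others, so $\Upsilon_n - v$ is again connected.

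The main obstacle is really only the bookkeeping between the regimes $k=1$ and $k\geq 2$: no single case is difficult, but the clique-based argument in the $k\geq 2$ case collapses to the trivial universal-vertex argument when $k=1$, so that subcase must be handled separately, and the two excluded values of $n$ must be tracked carefully to ensure $\Upsilon_n - \frac{n}{p_i}$ really does have more than one vertex.
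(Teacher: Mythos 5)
Your proposal is correct and follows essentially the same route as the paper: the forward direction uses the pendant vertices $p_i$ together with $\pi(n)\geq 3$, and the converse rests on the same two facts the paper uses, namely that $\left\{\tfrac{n}{p_1},\ldots,\tfrac{n}{p_k}\right\}$ is a clique and that every vertex divisible by $p_i$ is adjacent to $\tfrac{n}{p_i}$ (with the universal vertex $\tfrac{n}{p_1}$ handling $k=1$). The only difference is presentational: you argue connectivity of $\Upsilon_n-v$ directly via the dominating clique, whereas the paper phrases the same argument as a contradiction with a separation $(A,B)$.
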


\begin{proof}
Each $p_i$, $i\in [k]$, is a pendant vertex of $\Upsilon_n$ by Proposition \ref{prop:pendant} (with $p_i\sim \frac{n}{p_i}$). Since $n\notin\{p_1^3, p_1p_2\}$, we have $\pi(n)=|V(\Upsilon_n)|\geq 3$ and then connectedness of $\Upsilon_n$ implies that each $\frac{n}{p_i}$ is a cut vertex. We claim that any cut vertex of $\Upsilon_n$ is one of $\frac{n}{p_1}, \frac{n}{p_2},\ldots, \frac{n}{p_k}$.

If $k=1$, then $\frac{n}{p_1}$ is adjacent with all other vertices of $\Upsilon_n$ and so it must be the only cut vertex of $\Upsilon_n$.
Now consider $k\geq 2$. Suppose that there exists a cut vertex $v$ of $\Upsilon_n$ different from $\frac{n}{p_1}, \frac{n}{p_2},\ldots, \frac{n}{p_k}$. Let $(A,B)$ be a separation of $\Upsilon_n-v$. Since $\frac{n}{p_i}\sim\frac{n}{p_j}$ for distinct $i,j\in [k]$, all the vertices $\frac{n}{p_i}$, $i\in [k]$, are either in $A$ or in $B$. Without loss of generality, we may assume that $\frac{n}{p_i}\in A$ for all $i\in [k]$. Since $B$ is nonempty, there is a vertex $u\in B$. Since $u$ is divisible by $p_j$ for some $j\in [k]$, we must have $u\sim \frac{n}{p_j}$, contradicting that $(A,B)$ is a separation of $\Upsilon_n-v$.
\end{proof}

\section{Similarity and isomorphisms} \label{similarity}

Let $n$ and $m$ be two positive integers with their prime power factorisations:
$$n=p_1^{\alpha_1}p_2^{\alpha_2}\cdots p_k^{\alpha_k}\quad \text{ and } \quad m=q_1^{\beta_1}q_2^{\beta_2}\cdots q_l^{\beta_l},$$
where $p_i, q_j$ are primes and $\alpha_i,\beta_j$ are positive integers with $\alpha_1\geq \alpha_2\geq \cdots \geq \alpha_k$ and $\beta_1\geq \beta_2\geq \cdots \geq \beta_l$. We say that $n$ and $m$ are {\it similar} if $k=l$ and $\alpha_i=\beta_i$ for every $i\in [k]$.

If two composites $n$ and $m$ are similar, then the proper divisor graphs $\Upsilon_n$ and $\Upsilon_m$ are isomorphic. This can be seen from the fact that the construction of the graph $\Upsilon_n$ does not depend on the actual primes involved as divisors of $n$. What about the converse statement? If $n=p_1^3$ and $m=q_1q_2$, then $\Upsilon_n \cong K_2\cong \Upsilon_m$, but $n$ and $m$ are not similar. We prove that the converse statement is also true with this particular example as the only exception. More precisely, we have the following:

\begin{theorem}\label{thm:sim}
Let $m$ and $n$ be composite integers. Then $\Upsilon_m$ and $\Upsilon_n$ are isomorphic if and only if $m$ and $n$ are similar, except for $m,n\in\{p_1^3, q_1q_2\}$ with $m\neq n$, where $p_1,q_1,q_2$ are primes with $q_1\neq q_2$.
\end{theorem}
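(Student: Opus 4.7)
The forward direction and the exception $\Upsilon_{p_1^3}\cong K_2\cong \Upsilon_{q_1q_2}$ are already established in the paragraph preceding the theorem, so I focus on the converse: assuming $\Upsilon_m\cong \Upsilon_n$ with $(m,n)$ outside the excluded pair, I show $m$ and $n$ are similar. Any isomorphism preserves the vertex count (equivalently, $N:=\pi(n)+2=\prod_i(\alpha_i+1)=\prod_j(\beta_j+1)$) and the number of pendant vertices, which by Corollary \ref{coro:pendant} is $1$ when $n=p_1^{\alpha_1}$ with $\alpha_1\ge 5$, $2$ when $n\in\{p_1^3,p_1^4\}$, and $k$ when $n$ has $k\ge 2$ distinct primes. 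These two invariants already dispose of every case in which at least one of $m,n$ is a prime power: matching the pendant count and $\pi$-value forces $m$ and $n$ to be prime powers of equal exponent, the sole escape being $\alpha_1=3$, $\pi(n)=2$, where $m=q_1q_2$ produces the excluded pair (the case $\alpha_1=4$ admits no non-prime-power realization of $m$, since $(\beta_1+1)(\beta_2+1)=5$ is insoluble). The substantive case is therefore that both $n$ and $m$ have at least two distinct prime divisors, and equal pendant counts give $k=l\ge 2$.

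Here Proposition \ref{prop:pendant}(iii) identifies the pendants of $\Upsilon_n$ as $p_1,\ldots,p_k$, each with the unique neighbor $n/p_i$, and these neighbors are pairwise distinct. An isomorphism $\phi:\Upsilon_m\to \Upsilon_n$ thus induces a degree-preserving bijection between $\{m/q_j:j\in[k]\}$ and $\{n/p_i:i\in[k]\}$, so the multisets $\{\deg(n/p_i)\}$ and $\{\deg(m/q_j)\}$ coincide. Using Proposition \ref{prop:deg} together with the fact that $n\mid (n/p_i)^2$ exactly when $\alpha_i\ge 2$, I plan to verify
\[
\deg(n/p_i)=f(\alpha_i):=\begin{cases} \dfrac{\alpha_i}{\alpha_i+1}\,N-2 & \text{if } \alpha_i\ge 2,\\[3pt] \dfrac{N}{2}-1 & \text{if } \alpha_i=1, \end{cases}
\]
and similarly for $m$. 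The remaining task is to recover the multiset $\{\alpha_i\}$ from the multiset $\{f(\alpha_i)\}$ together with $N$.

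A short calculation gives $f(\alpha+1)-f(\alpha)=N/((\alpha+1)(\alpha+2))>0$ for $\alpha\ge 2$, while $f(2)-f(1)=N/6-1$, which is positive precisely when $N>6$. Hence $f$ is strictly increasing on $\mathbb{Z}_{\ge 1}$ whenever $N>6$, so the multiset of values recovers the multiset of arguments, and $\{\alpha_i\}=\{\beta_j\}$ follows at once. The main obstacle is the collision $f(1)=f(2)=2$ at $N=6$, but combined with $k\ge 2$ the equation $\prod(\alpha_i+1)=6$ forces $(\alpha_1,\alpha_2)=(2,1)$ uniquely; the only other small value, $N=4$ with $k\ge 2$, similarly forces $(1,1)$. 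Thus in every subcase $\{\alpha_i\}=\{\beta_j\}$, so $m$ and $n$ are similar, which completes the proof.
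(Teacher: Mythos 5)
Your proposal is correct, and its overall skeleton coincides with the paper's: the theorem is split into the prime-power case (settled by comparing $\pi$-values and pendant counts, exactly as in Proposition \ref{lem:l2}) and the case $k=l\ge 2$ (settled by noting that an isomorphism carries each pendant $q_j$ to some pendant $p_i$ and hence the unique neighbour $m/q_j$ to $n/p_i$, so the degrees of these neighbours must match, as in Proposition \ref{prop:sim}). Where you genuinely diverge is in the final step of recovering the exponents from those degrees. The paper fixes the specific bijection $\phi(p_i)=q_s$ and argues elementwise, which forces a four-way case split on whether $\alpha_i$ and $\beta_s$ are $1$ or $\ge 2$ and a somewhat delicate computation (equations (\ref{eq:deg5})--(\ref{eq:deg7})) showing that a mismatch would force $m=q_1^2q_2$ --- a case that must therefore be excluded separately at the outset. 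You instead package Proposition \ref{prop:deg} into the single function $f(\alpha)$ of the exponent and the common value $N=\pi(n)+2$, observe that $f$ is strictly increasing on $\mathbb{Z}_{\ge 1}$ once $N>6$ (the only possible collision being $f(1)=f(2)$ at $N=6$), and conclude that the degree multiset determines the exponent multiset; the residual values $N\in\{4,6\}$ are pinned down by $N$ alone. This buys a cleaner, more uniform argument that absorbs the paper's special treatment of $p_1^2p_2$ into the generic mechanism; the paper's elementwise version, in exchange, yields the slightly stronger Corollary \ref{coro:sim} ($\phi(p_i)=p_j$ implies $\alpha_i=\alpha_j$ for the \emph{given} automorphism), which is what Section \ref{auto-group} actually needs, and which your multiset formulation does not immediately provide.
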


The proof of Theorem \ref{thm:sim} follows from Propositions \ref{lem:l2} and \ref{prop:sim} below.

\begin{proposition}\label{lem:l2}
Let $n=p_1^{\alpha_1}$ with $\alpha_1 \geq 4$. If $\Upsilon_n\cong \Upsilon_m$ for some composite $m$, then $n$ and $m$ are similar.
\end{proposition}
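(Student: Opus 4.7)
The plan is to use two graph invariants preserved by isomorphism, namely the number of vertices and the number of cut vertices, to force $m$ into the same prime-power shape as $n$. Write $m=q_1^{\beta_1}\cdots q_l^{\beta_l}$ with $\beta_1\geq\cdots\geq\beta_l\geq 1$.

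First I would dispose of the ``exceptional'' values of $m$. Since $\alpha_1\geq 4$, equation (\ref{pi-n}) gives $|V(\Upsilon_n)|=\pi(n)=\alpha_1-1\geq 3$, whereas $\Upsilon_{q_1^3}\cong K_2\cong \Upsilon_{q_1q_2}$ have only two vertices. Hence $\Upsilon_n\cong \Upsilon_m$ forces $m\notin\{q_1^3,q_1q_2\}$, which lets me apply the cut-vertex proposition to both $n$ and $m$ without worrying about the excluded cases.

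Next I would apply the proposition on cut vertices to the prime-power $n=p_1^{\alpha_1}$ (with $k=1$): it yields that $\Upsilon_n$ has exactly one cut vertex, namely $n/p_1=p_1^{\alpha_1-1}$. The same proposition applied to $m$ tells me that $\Upsilon_m$ has exactly $l$ cut vertices, namely $m/q_1,\dots,m/q_l$. Since an isomorphism must preserve the count of cut vertices, $l=1$, and thus $m=q_1^{\beta_1}$ is itself a prime power.

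Finally, comparing vertex counts once more gives $\beta_1-1=\pi(m)=\pi(n)=\alpha_1-1$, hence $\beta_1=\alpha_1$, and $n$ and $m$ are similar as required. There is no real obstacle: the cut-vertex count is a clean invariant that immediately detects the number of distinct prime divisors (outside the two trivial $K_2$ cases), after which the vertex count pins down the exponent. The only care needed is to rule out $m\in\{q_1^3,q_1q_2\}$ before invoking the cut-vertex proposition, which the hypothesis $\alpha_1\geq 4$ supplies at once.
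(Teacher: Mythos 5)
Your proof is correct, and it takes a genuinely different route from the paper's. The paper shows $m$ must be a prime power by splitting into cases: for $\alpha_1\geq 5$ it counts pendant vertices (a prime power with exponent at least $5$ has exactly one pendant vertex, by Proposition \ref{prop:pendant}, while any $m$ with $l\geq 2$ prime divisors has $l\geq 2$ of them), and for $\alpha_1=4$ it falls back on a vertex-count argument, since $\Upsilon_{p_1^4}$ has two pendant vertices ($p_1$ and $p_1^2$) and the pendant count alone no longer separates the prime-power case; here one checks that no $m$ with at least two distinct prime divisors satisfies $\pi(m)=3$. You instead count cut vertices, invoking the proposition at the end of Section \ref{basic}: for any composite $m\notin\{q_1^3,q_1q_2\}$ the cut vertices of $\Upsilon_m$ are exactly $m/q_1,\dots,m/q_l$, so their number equals the number of distinct prime divisors, and your preliminary observation that $|V(\Upsilon_n)|=\alpha_1-1\geq 3$ legitimately excludes the two $K_2$ exceptions before applying that proposition to $m$ (it applies to $n$ since $\alpha_1\geq 4$). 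This gives $l=1$ uniformly for all $\alpha_1\geq 4$, after which the vertex count forces $\beta_1=\alpha_1$, exactly as in the paper. The benefit of your invariant is that it avoids the paper's case split, since the cut-vertex count of $\Upsilon_{p_1^{\alpha_1}}$ is $1$ already at $\alpha_1=4$; the cost is that you lean on the (slightly later-in-section, but logically prior and independent) cut-vertex classification rather than only on the pendant-vertex results the paper reuses throughout Section \ref{similarity}. Both arguments are sound.
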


\begin{proof}
We have $|V(\Upsilon_n)|=|V(\Upsilon_m)|$ as $\Upsilon_n\cong \Upsilon_m$. If $m=q_1^{\beta_1}$ for some prime $q_1$ and positive integer $\beta_1$, then $\alpha_1-1=\pi(n)=|V(\Upsilon_n)|=|V(\Upsilon_m)|=\pi(m)=\beta_1-1$ gives $\beta_1=\alpha_1$.

Therefore, it is enough to prove that $m$ is a prime power. This is true if $\alpha_1=4$, as there is no integer $m$ with at least two distinct prime divisors for which $\pi(m)=|V(\Upsilon_m)|=|V(\Upsilon_n)|=\alpha_1 -1=3$. If $\alpha_1\geq 5$, then the claim follows from Proposition \ref{prop:pendant} and the fact that $\Upsilon_n$ and $\Upsilon_m$ must have the same number of pendant vertices.
\end{proof}

\begin{proposition}\label{prop:sim}
Let $n=p_1^{\alpha_1}p_2^{\alpha_2}\cdots p_k^{\alpha_k}$ and $m=q_1^{\beta_1}q_2^{\beta_2}\cdots q_l^{\beta_l}$ with $k\geq 2$, $l\geq 2$, $\alpha_1\geq \alpha_2\geq \cdots \geq \alpha_k$ and $\beta_1\geq \beta_2\geq \cdots \geq \beta_l$. If $\Upsilon_n\cong \Upsilon_m$, then $n$ and $m$ are similar.
\end{proposition}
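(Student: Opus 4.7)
The plan is to exploit the fact, established in Proposition \ref{prop:pendant}(iii), that the pendant vertices of $\Upsilon_n$ are precisely the primes $p_1,\dots,p_k$, each having the unique neighbour $n/p_i$. Any isomorphism $\phi\colon\Upsilon_n\to\Upsilon_m$ maps pendants to pendants, so Corollary \ref{coro:pendant} immediately gives $k=l$, and $\phi$ restricts to a bijection between $\{p_1,\dots,p_k\}$ and $\{q_1,\dots,q_k\}$. Since each pendant has a unique neighbour, $\phi$ further induces a degree-preserving bijection $\sigma\colon[k]\to[k]$ with $\phi(n/p_i)=m/q_{\sigma(i)}$. Together with $|V(\Upsilon_n)|=|V(\Upsilon_m)|$ and (\ref{pi-n}), this yields the common invariant $N:=\pi(n)+2=\prod_{i=1}^{k}(\alpha_i+1)=\prod_{j=1}^{k}(\beta_j+1)$.

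The next step is to translate this data into information about exponents. Since $n\mid (n/p_i)^2$ if and only if $\alpha_i\geq 2$, combining Proposition \ref{prop:deg} with (\ref{pi-n}) expresses $\deg(n/p_i)$ as a function of $N$ and $\alpha_i$ alone:
\[
\deg(n/p_i)=\begin{cases}\dfrac{N\alpha_i}{\alpha_i+1}-2 & \text{ if } \alpha_i\geq 2,\\[4pt] \dfrac{N}{2}-1 & \text{ if } \alpha_i=1,\end{cases}
\]
and an analogous formula holds for $\deg(m/q_j)$ in terms of $\beta_j$. Hence the pair $\bigl(N,\{\deg(n/p_i)\}_{i\in[k]}\bigr)$ is an isomorphism invariant, and it suffices to show that it determines the multiset $\{\alpha_1,\dots,\alpha_k\}$; this would immediately give $\{\alpha_i\}=\{\beta_j\}$ and hence similarity by definition.

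The only point requiring genuine work is this last injectivity claim. For $\alpha\geq 2$ the function $\alpha\mapsto N\alpha/(\alpha+1)-2$ is strictly increasing, so the sole possible collision is $N/2-1=N\alpha/(\alpha+1)-2$, which simplifies to $N=2(\alpha+1)/(\alpha-1)$. A short inspection shows that the only integer solution which is realised with $k\geq 2$ and with the offending exponent $\alpha$ actually appearing in the factorisation is $(N,\alpha)=(6,2)$, corresponding to $n=p_1^2 p_2$; in that case the factorisation $6=2\cdot 3$ already forces the exponent multiset $\{2,1\}$ from $(N,k)=(6,2)$ alone, so similarity is automatic. In every other situation the degree function is injective in $\alpha$, and each $\alpha_i$ can be read off directly from $\deg(n/p_i)$. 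The main obstacle is this exceptional-case bookkeeping; once it is handled, the theorem follows immediately from the structural results of Section \ref{basic}.
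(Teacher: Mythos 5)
Your argument is correct and takes essentially the same route as the paper: pendant vertices give $k=l$, the degree of each pendant's unique neighbour $\frac{n}{p_i}$ combined with the invariant $N=\prod_{i}(\alpha_i+1)=\prod_{j}(\beta_j+1)$ recovers the exponents, and the sole collision $N=\frac{2(\alpha+1)}{\alpha-1}$ is realised (for $k\geq 2$) only at $(N,\alpha)=(6,2)$, i.e. $n=p_1^2p_2$, which is exactly the exceptional case the paper isolates first. Your injectivity-of-the-degree-function packaging is just a reorganisation of the paper's pairwise case analysis culminating in its equation $(\beta_s-1)\prod_{t\neq s}(\beta_t+1)=2$, so the underlying computation is identical.
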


\begin{proof}
By Proposition \ref{prop:pendant}(iii), $p_1,p_2,\ldots,p_k$ are the pendant vertices of $\Upsilon_n$ and $q_1,q_2,\ldots,q_l$ are that of $\Upsilon_m$. Since $\Upsilon_n\cong \Upsilon_m$, they have the same number of pendant vertices and hence $k=l$. The fact that $|V(\Upsilon_n)|=|V(\Upsilon_m)|$ gives
\begin{equation}\label{eq:pinpim}
(\alpha_1+1)(\alpha_2+1)\cdots(\alpha_k+1)=(\beta_1+1)(\beta_2+1)\cdots(\beta_k+1)
\end{equation}
Using (\ref{eq:pinpim}), it can be seen that $n=p_1^2p_2$ if and only if $m=q_1^2q_2$, and so $n,m$ are similar in this case. Therefore, we shall assume that $n\neq p_1^2p_2$ (and hence $m\neq q_1^2q_2$).

Let $\phi:\Upsilon_n\rightarrow \Upsilon_m$ be a graph isomorphism. Since $\phi$ maps pendant vertices of $\Upsilon_n$ to that of $\Upsilon_m$, it induces a bijection from $\{p_1,p_2,\ldots,p_k\}$ to $\{q_1,q_2,\ldots,q_k\}$. In order to prove the proposition, it is enough to show that $\alpha_i=\beta_s$ if $\phi(p_i)=q_s$ for $i,s\in [k]$.

The only neighbour of $p_i$ in $\Upsilon_n$ is $\frac{n}{p_i}$ and that of $q_s$ in $\Upsilon_m$ is $\frac{m}{q_s}$. Since $\phi(p_i)=q_s$, we must have $\phi\left(\frac{n}{p_i}\right)=\frac{m}{q_s}$ and hence
\begin{equation}\label{eq:deg1deg2}
\deg\left(\frac{n}{p_i}\right)=\deg\left(\frac{m}{q_s}\right).
\end{equation}
Note that $n$ divides $\left(\frac{n}{p_i}\right)^2$ if and only if $\alpha_i\geq 2$, and $m$ divides $\left(\frac{m}{q_s}\right)^2$ if and only if $\beta_s\geq 2$. So we have the following by Proposition \ref{prop:deg}:
\begin{align}\label{eq:deg1}
\alpha_i= 1: \quad\quad &\deg\left(\frac{n}{p_i}\right)=(\alpha_1+1)\cdots(\alpha_{i-1}+1)\alpha_i(\alpha_{i+1}+1)\cdots(\alpha_k+1)-1;
\end{align}
\begin{align}\label{eq:deg2}
\alpha_i\geq 2: \quad\quad &\deg\left(\frac{n}{p_i}\right)=(\alpha_1+1)\cdots(\alpha_{i-1}+1)\alpha_i(\alpha_{i+1}+1)\cdots(\alpha_k+1)-2;
\end{align}
\begin{align}\label{eq:deg3}
\beta_s=1: \quad\quad & \deg\left(\frac{m}{q_s}\right)=(\beta_1+1)\cdots(\beta_{s-1}+1)\beta_s(\beta_{s+1}+1)\cdots(\beta_k+1)-1;
\end{align}
\begin{align}\label{eq:deg4}
\beta_s\geq 2: \quad\quad &\deg\left(\frac{m}{q_s}\right)=(\beta_1+1)\cdots(\beta_{s-1}+1)\beta_s(\beta_{s+1}+1)\cdots(\beta_k+1)-2.
\end{align}
If $\alpha_i\geq 2$ and $\beta_s\geq 2$, then equations (\ref{eq:pinpim}), (\ref{eq:deg1deg2}), (\ref{eq:deg2}) and (\ref{eq:deg4}) give $\frac{\alpha_i+1}{\alpha_i}=\frac{\beta_s+1}{\beta_s}$, that is, $\alpha_i=\beta_s$.

Now suppose that $\alpha_i=1$. If $\beta_s=1$, then we are done. Suppose that $\beta_s\geq 2$. We shall get a contradiction by showing that $m=q_1^2q_2$. Putting $\alpha_i=1$ in (\ref{eq:pinpim}), we get
\begin{equation}\label{eq:deg5}
(\alpha_1+1)\cdots (\alpha_{i-1}+1)(\alpha_{i+1}+1)\cdots(\alpha_k+1)=\frac{1}{2}(\beta_1+1)\cdots(\beta_s+1)\cdots(\beta_k+1).
\end{equation}
From (\ref{eq:deg1deg2}), (\ref{eq:deg1}) and (\ref{eq:deg4}), we get
\begin{align}\label{eq:deg6}
(\alpha_1+1)\cdots (\alpha_{i-1}+1)(\alpha_{i+1}+1)\cdots(\alpha_k+1)&= (\beta_1+1)\cdots(\beta_{s-1}+1)\beta_s \nonumber\\
 &\quad\quad\quad (\beta_{s+1}+1)\cdots(\beta_k+1)-1.
\end{align}
An easy calculation using the equations (\ref{eq:deg5}) and (\ref{eq:deg6}) gives that
\begin{equation}\label{eq:deg7}
(\beta_s-1)\prod^{k}_{\substack{t=1\\ t\neq s}}(\beta_t+1)=2.
\end{equation}
Since $k\geq 2$, $\beta_s\geq 2$ and $\beta_t\geq 1$ for $t\neq s$, it follows from (\ref{eq:deg7}) that $k=2$, $\beta_s=2$ and $\beta_t=1$, where $\{s,t\}=\{1,2\}$. Since $\beta_s>\beta_t$, we must have $s=1, t=2$ and hence $m=q_1^2q_2$.

Similarly, if $\alpha_i\geq 2$ and $\beta_s=1$, then we shall get a contradiction by showing that $n= p_1^2p_2$ (using the equations (\ref{eq:pinpim}), (\ref{eq:deg1deg2}), (\ref{eq:deg2}) and (\ref{eq:deg3})).
\end{proof}

From the proof of Proposition \ref{prop:sim}, we have the following result which is useful in determining the automorphism group of $\Upsilon_n$ in the next section.

\begin{corollary}\label{coro:sim}
Let $n=p_1^{\alpha_1}p_2^{\alpha_2}\cdots p_k^{\alpha_k}$ with $k\geq 2$ and $\alpha_1\geq \alpha_2\geq \cdots \geq \alpha_k$. If $n\neq p_1^2p_2$ and $\phi:\Upsilon_n\rightarrow \Upsilon_n$ is a graph automorphism, then $\phi$ permutes the pendant vertices $p_1,p_2,\ldots,p_k$ of $\Upsilon_n$ such that $\phi(p_i)= p_j$ implies $\alpha_i=\alpha_j$ for $1\leq i,j\leq k$.
\end{corollary}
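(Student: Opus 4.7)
The plan is to reuse the argument from the proof of Proposition \ref{prop:sim} with $m=n$, noting that the construction there used the hypothesis $\Upsilon_n\cong \Upsilon_m$ only through the fact that a bijection between vertex sets preserves pendant vertices and their neighbours' degrees.

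First I would observe that any automorphism $\phi$ of $\Upsilon_n$ preserves vertex degrees and therefore restricts to a permutation of the set of pendant vertices. By Proposition \ref{prop:pendant}(iii), since $k\geq 2$, this set is precisely $\{p_1,p_2,\ldots,p_k\}$. So $\phi$ permutes $p_1,\ldots,p_k$, and I may suppose $\phi(p_i)=p_j$ for some $i,j\in [k]$.

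Next, since $p_i$ has the unique neighbour $\frac{n}{p_i}$ and $p_j$ has the unique neighbour $\frac{n}{p_j}$, the automorphism $\phi$ must send $\frac{n}{p_i}$ to $\frac{n}{p_j}$; in particular $\deg\!\left(\frac{n}{p_i}\right)=\deg\!\left(\frac{n}{p_j}\right)$. Now I would apply Proposition \ref{prop:deg} to both of these degrees, using that $n\mid \left(\frac{n}{p_r}\right)^2$ if and only if $\alpha_r\geq 2$. This gives the same four case distinctions (on whether $\alpha_i$ and $\alpha_j$ are $1$ or $\geq 2$) that appeared in the proof of Proposition \ref{prop:sim}, where now the global equation $\pi(n)=\pi(m)$ becomes a trivial identity and can be dropped.

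In the two "matching" cases (both exponents equal to $1$, or both $\geq 2$), equating the degree formulas yields $\alpha_i=\alpha_j$ directly: when both are $\geq 2$ the identity $\frac{\alpha_i+1}{\alpha_i}=\frac{\alpha_j+1}{\alpha_j}$ forces equality. The only real obstacle is the mixed case where, say, $\alpha_i=1$ and $\alpha_j\geq 2$. Here I would repeat the computation carried out in the proof of Proposition \ref{prop:sim} leading to equation (\ref{eq:deg7}) with $\beta$ replaced by $\alpha$: it would force $k=2$, $\alpha_j=2$, $\alpha_i=1$, and hence $n=p_1^2p_2$, contradicting our assumption $n\neq p_1^2p_2$. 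The symmetric case $\alpha_i\geq 2$, $\alpha_j=1$ is ruled out identically. Therefore $\alpha_i=\alpha_j$ in all surviving cases, completing the proof.
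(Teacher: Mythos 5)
Your proposal is correct and follows essentially the same route as the paper, which obtains the corollary precisely by specializing the proof of Proposition \ref{prop:sim} to the case $m=n$: the degree equality $\deg\left(\frac{n}{p_i}\right)=\deg\left(\frac{n}{p_j}\right)$ from the unique-neighbour argument, the case analysis via Proposition \ref{prop:deg}, and the elimination of the mixed case through the computation yielding (\ref{eq:deg7}), which forces $n=p_1^2p_2$ and contradicts the hypothesis. The only minor caveat is that the relation coming from (\ref{eq:pinpim}) is not so much ``dropped'' as automatically satisfied when $m=n$ (it is still used, in the guise of (\ref{eq:deg5}), within that computation), but this does not affect correctness.
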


\section{The automorphism group $\Aut(\Upsilon_n)$}\label{auto-group}

Let $n=p_1^{\alpha_1}p_2^{\alpha_2}\cdots p_k^{\alpha_k}$ with $\alpha_1\geq \alpha_2\geq \cdots \geq \alpha_k$. We study the automorphism group $\Aut(\Upsilon_n)$ of $\Upsilon_n$ in this section. We have the following result when $k=1$.

\begin{proposition}
If $n=p_1^{\alpha_1}$, then $|\Aut(\Upsilon_n)|=2$.
\end{proposition}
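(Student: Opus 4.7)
The plan is to combine Proposition \ref{prop:degcomp} (which gives a very restrictive degree comparison on the chain of divisors $p_1 \mid p_1^2 \mid \cdots \mid p_1^{\alpha_1-1}$) with an explicit transposition. Write $V(\Upsilon_n) = \{p_1, p_1^2, \ldots, p_1^{\alpha_1-1}\}$ and recall the adjacency rule $p_1^i \sim p_1^j \iff i \neq j$ and $i + j \geq \alpha_1$. Set $s := \lceil \alpha_1/2\rceil$.

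First I would establish the upper bound $|\Aut(\Upsilon_n)| \leq 2$. Applying Proposition \ref{prop:degcomp} successively along the divisibility chain $p_1 \mid p_1^2 \mid \cdots \mid p_1^{\alpha_1-1}$, the degrees $\deg(p_1^i)$ form a non-decreasing sequence in $i$, and the only pair of consecutive terms that can coincide is $(\deg(p_1^{s-1}), \deg(p_1^s))$. Consequently all vertex degrees are pairwise distinct except that $\deg(p_1^{s-1}) = \deg(p_1^s)$. Since every graph automorphism preserves degrees, any $\phi \in \Aut(\Upsilon_n)$ must fix each vertex whose degree is unique and may at most swap $p_1^{s-1}$ and $p_1^s$, giving $|\Aut(\Upsilon_n)| \leq 2$.

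For the lower bound I would exhibit the transposition $\tau$ that swaps $p_1^{s-1}$ with $p_1^s$ and fixes every other vertex, and verify it is indeed an automorphism. The only non-trivial check is that for each $i \in \{1, \ldots, \alpha_1-1\} \setminus \{s-1, s\}$ one has $p_1^i \sim p_1^{s-1} \iff p_1^i \sim p_1^s$. Using the adjacency rule, these two conditions become $i \geq \alpha_1 - s + 1$ and $i \geq \alpha_1 - s$ respectively, so they differ only at $i = \alpha_1 - s$. A short parity case split handles this: if $\alpha_1$ is even then $\alpha_1 - s = s$, while if $\alpha_1$ is odd then $\alpha_1 - s = s-1$; either way the exceptional index $\alpha_1 - s$ is already one of the two swapped vertices and therefore excluded. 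So the conditions coincide on all remaining $i$, proving $\tau \in \Aut(\Upsilon_n)$.

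The case $\alpha_1 = 3$ is covered automatically since then $\Upsilon_n \cong K_2$, giving $|\Aut(\Upsilon_n)| = 2$; the argument above applies uniformly. I do not expect a serious obstacle: the degree-uniqueness step is a direct corollary of Proposition \ref{prop:degcomp}, and the verification that the candidate swap is an automorphism reduces to the small parity computation above.
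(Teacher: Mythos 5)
Your proof is correct and follows essentially the same route as the paper: Proposition \ref{prop:degcomp} forces every automorphism to fix all vertices except possibly $p_1^{\lceil \alpha_1/2\rceil-1}$ and $p_1^{\lceil \alpha_1/2\rceil}$, and the swap of these two is verified to be an automorphism. Your explicit check via the adjacency rule $i+j\geq \alpha_1$ with the parity case split is just a more detailed version of the paper's observation that every other vertex is adjacent to both or neither of the two swapped vertices.
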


\begin{proof}
Recall that $\alpha_1\geq 3$ by our assumption. By Proposition \ref{prop:degcomp}, the degrees of the vertices of $\Upsilon_n$ are pairwise distinct, with the exception of two vertices $u:=p_1^{\lceil \frac{\alpha_1}{2}\rceil -1}$ and $v:=p_1^{\lceil \frac{\alpha_1}{2}\rceil}$ for which the degrees are the same. Therefore, every automorphism of $\Upsilon_n$ must fix each of the vertices contained in $V(\Upsilon_n)\setminus\{u,v\}$.

The map $\phi:\Upsilon_n\rightarrow \Upsilon_n$ with $\phi(u)=v$, $\phi(v)=u$ and which is identity on $V(\Upsilon_n)\setminus\{u,v\}$ is an automorphism of $\Upsilon_n$. This follows from the fact that a vertex of $V(\Upsilon_n)\setminus\{u,v\}$ is adjacent with either both $u$ and $v$, or none of them (note that $u\sim v$ if and only if $\alpha_1$ is odd). Therefore, $|\Aut(\Upsilon_n)|=2$.
\end{proof}

If $k\geq 2$, then $p_1,p_2,\ldots,p_k$ are precisely the pendant vertices of $\Upsilon_n$ (Proposition \ref{prop:pendant}(iii)). Therefore, for $\phi\in \Aut(\Upsilon_n)$, $\phi(p_i)$ is also a prime for every $i\in [k]$.

\begin{lemma}\label{lem:aut-1}
Let $k\geq 2$ and $\phi$ be an automorphism of $\Upsilon_n$. Then the following hold:
\begin{enumerate}
\item[(i)] $\phi\left(\frac{n}{p_i}\right)=\frac{n}{\phi(p_i)}$ for $1\leq i\leq k$.
\item[(ii)] If $n\neq p_1^2p_2$ and $w=p_1^{s_1}p_2^{s_2}\cdots p_k^{s_k}\in V(\Upsilon_n)$, then
$\phi(w)=\phi(p_1)^{t_1}\phi(p_2)^{t_2}\cdots \phi(p_k)^{t_k},$
where $t_i\geq 1$ if and only $s_i\geq 1$ for $1\leq i\leq k$.
\end{enumerate}
\end{lemma}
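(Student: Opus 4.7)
The plan is to handle (i) first, since (ii) will use it, and then to prove (ii) by translating the divisibility relation $p_j \mid w$ into a purely graph-theoretic condition.

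For (i), I would appeal to Proposition \ref{prop:pendant}(iii), which identifies the pendant vertices of $\Upsilon_n$ as precisely $p_1,p_2,\ldots,p_k$, the unique neighbor of $p_i$ being $\frac{n}{p_i}$. An automorphism preserves vertex degrees and so permutes the pendants; in particular, $\phi(p_i)$ is itself a pendant, with a unique neighbor equal to $\frac{n}{\phi(p_i)}$. Since adjacency is preserved, $\phi$ must map the unique neighbor of $p_i$ to the unique neighbor of $\phi(p_i)$, giving $\phi\!\left(\tfrac{n}{p_i}\right)=\tfrac{n}{\phi(p_i)}$.

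For (ii), the key observation is the following characterization of divisibility: for every vertex $w$ of $\Upsilon_n$ and every $j\in[k]$, the relation $n\mid w\cdot\tfrac{n}{p_j}$ holds if and only if $p_j\mid w$. Therefore $w\sim \tfrac{n}{p_j}$ in $\Upsilon_n$ iff $p_j\mid w$ and $w\neq \tfrac{n}{p_j}$. I would use this to show, for each $j$, that $p_j\mid w$ iff $\phi(p_j)\mid \phi(w)$, treating two cases. If $w\neq \tfrac{n}{p_j}$, then $\phi$ being a bijection forces $\phi(w)\neq \phi\!\left(\tfrac{n}{p_j}\right)=\tfrac{n}{\phi(p_j)}$ (using part (i)); adjacency preservation then yields $w\sim \tfrac{n}{p_j}$ iff $\phi(w)\sim \tfrac{n}{\phi(p_j)}$, and the two-way characterization gives the desired equivalence. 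If $w=\tfrac{n}{p_j}$, then $\phi(w)=\tfrac{n}{\phi(p_j)}$ by (i), and the equivalence reduces to $\alpha_j\geq 2$ iff the exponent of $\phi(p_j)$ in $n$ is at least $2$. This is where I invoke Corollary \ref{coro:sim} (valid by the hypothesis $n\neq p_1^2p_2$): it guarantees $\phi(p_j)=p_r$ with $\alpha_r=\alpha_j$, so the two exponents are in fact equal.

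Once this equivalence is established, the set of prime divisors of $\phi(w)$ is exactly $\{\phi(p_j):p_j\mid w\}$, and writing $\phi(w)=\phi(p_1)^{t_1}\phi(p_2)^{t_2}\cdots\phi(p_k)^{t_k}$ with this prescribed support yields precisely the condition that $t_i\geq 1$ iff $s_i\geq 1$. The main obstacle is the degenerate case $w=\tfrac{n}{p_j}$, where adjacency alone cannot detect whether $p_j\mid w$; it is for this single case that the exponent-preservation from Corollary \ref{coro:sim}, and hence the exclusion of $n=p_1^2p_2$, is essential.
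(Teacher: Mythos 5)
Your proposal is correct and follows essentially the same route as the paper: part (i) via the pendant-vertex structure, and part (ii) via the observation that, for distinct vertices, $w\sim \frac{n}{p_j}$ exactly when $p_j\mid w$, with the degenerate case $w=\frac{n}{p_j}$ handled through Corollary \ref{coro:sim} (which is precisely where $n\neq p_1^2p_2$ enters). The only cosmetic difference is that you argue the divisibility equivalence as a single biconditional, while the paper proves the forward implication and then applies the same argument to $\phi^{-1}$.
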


\begin{proof}
(i) Since $p_i$ and $\phi(p_i)$ both are pendant vertices with $p_i\sim\frac{n}{p_i}$ and $\phi(p_i)\sim\frac{n}{\phi(p_i)}$, we must have $\phi\left(\frac{n}{p_i}\right)=\frac{n}{\phi(p_i)}$ for $1\leq i\leq k$.

(ii) Let $u,v$ be vertices of $\Upsilon_n$ such that $\phi(u)=v$. We claim that if $p_i$ divides $u$, then $p_j=\phi(p_i)$ divides $v$. Assume that $p_i|u$. If $u\neq \frac{n}{p_i}$, then $u \sim \frac{n}{p_i}$ implies that $v=\phi(u)\sim \phi\left(\frac{n}{p_i}\right)=\frac{n}{\phi(p_i)}=\frac{n}{p_j}$ and hence $p_j$ divides $v$. If $u=\frac{n}{p_i}$, then $p_i^2$ divides $n$ and so $\alpha_i\geq 2$. Since $n\neq p_1^2p_2$, we have $\alpha_j=\alpha_i\geq 2$ by Corollary \ref{coro:sim}. Then $v=\phi\left(\frac{n}{p_i}\right)=\frac{n}{\phi(p_i)}=\frac{n}{p_j}$ is divisible by $p_j$. Applying similar argument to the automorphism $\phi^{-1}$ of $\Upsilon_n$ and using the fact that $\phi^{-1}(v)=u$, we get that if $p_l$ divides $v$, then the prime $\phi^{-1}(p_l)$ divides $u$.

Taking $u=w=p_1^{s_1}p_2^{s_2}\cdots p_k^{s_k}$, it follows from the previous paragraph that $\phi(w)=\phi(p_1)^{t_1}\phi(p_2)^{t_2}\cdots \phi(p_k)^{t_k}$, where $t_i\geq 1$ if and only $s_i\geq 1$ for $1\leq i\leq k$.
\end{proof}

\begin{lemma}\label{lem:aut-2}
Let $k\geq 2$ with $n\neq p_1^2p_2$ and $\phi$ be an automorphism of $\Upsilon_n$. Then $\phi\left(p_i^{s_i}\right)=\phi(p_i)^{s_i}$ and $\phi\left(\frac{n}{p_i^{s_i}}\right)=\frac{n}{\phi(p_i)^{s_i}}$ for $1\leq i\leq k$ and $1\leq s_i\leq \alpha_i$.
\end{lemma}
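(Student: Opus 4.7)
The plan is to prove the two identities in turn. For the first, $\phi(p_i^{s_i})=\phi(p_i)^{s_i}$, I would apply Lemma~\ref{lem:aut-1}(ii) with $w=p_i^{s_i}$: since the only nonzero exponent in this factorisation sits in slot $i$, the lemma forces $\phi(p_i^{s_i})=\phi(p_i)^{t}$ for some integer $t\geq 1$. The task then reduces to identifying $t=s_i$, which I would do by a degree computation. Because $k\geq 2$, the integer $n$ is not a prime power, hence $n\nmid p_i^{2s}$ for every $1\leq s\leq \alpha_i$, and Proposition~\ref{prop:deg} gives $\deg(p_i^{s})=\pi(p_i^{s})+1=s$. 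Thus degree strictly increases along the chain $p_i,p_i^{2},\ldots,p_i^{\alpha_i}$, and similarly along the chain generated by $\phi(p_i)=p_j$ (which lives inside $\Upsilon_n$ up to exponent $\alpha_j=\alpha_i$ by Corollary~\ref{coro:sim}). Since $\phi$ preserves degrees, the equality $\deg(p_i^{s_i})=\deg(\phi(p_i)^{t})$ collapses to $s_i=t$.

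For the second identity, $\phi(n/p_i^{s_i})=n/\phi(p_i)^{s_i}$, I would induct on $s_i$. The key preparatory step is to compute the neighbourhood of $p_i^{s_i}$ explicitly. A routine divisibility check (using that the other primes already appear with full exponent in any common multiple of $p_i^{s_i}$ and $n$) shows that
\[
N\!\left(p_i^{s_i}\right)=\left\{\tfrac{n}{p_i^{s}}\ :\ 1\leq s\leq s_i\right\},
\]
and analogously $N(\phi(p_i)^{s_i})=\{n/\phi(p_i)^{s}:1\leq s\leq s_i\}$. The base case $s_i=1$ is immediate from Lemma~\ref{lem:aut-1}(i). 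For the inductive step, the first identity of the lemma (just established) tells me that $\phi$ carries $N(p_i^{s_i})$ bijectively onto $N(\phi(p_i)^{s_i})$; by the inductive hypothesis $\phi(n/p_i^{s})=n/\phi(p_i)^{s}$ for $1\leq s\leq s_i-1$, and the unique leftover element $n/p_i^{s_i}$ in the domain must map to the unique leftover element $n/\phi(p_i)^{s_i}$ in the codomain.

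I do not anticipate a serious obstacle. The one piece of bookkeeping that needs care is verifying that $\phi(p_i)^{s_i}$ and $n/\phi(p_i)^{s_i}$ are legitimate vertices of $\Upsilon_n$ for every $1\leq s_i\leq \alpha_i$; this is exactly what Corollary~\ref{coro:sim} delivers via $\alpha_j=\alpha_i$, and is precisely where the hypothesis $n\neq p_1^{2}p_2$ is used. The hypothesis $k\geq 2$ is used twice, once to secure the clean degree formula $\deg(p_i^{s})=s$, and once to ensure the neighbourhood of $p_i^{s_i}$ has the simple description above.
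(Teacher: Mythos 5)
Your proposal is correct and follows essentially the same route as the paper: Lemma \ref{lem:aut-1}(ii) plus the degree computation $\deg(p_i^{s})=\pi(p_i^{s})+1=s$ (valid since $k\geq 2$) to pin down the exponent, and then induction on the exponent using the explicit neighbourhood $N(p_i^{s})=\{n/p_i^{m}:1\leq m\leq s\}$ together with injectivity of $\phi$ to identify the one remaining image, with Corollary \ref{coro:sim} supplying $\alpha_j=\alpha_i$ exactly as in the paper. No gaps.
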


\begin{proof}
Let $u=p_i^{s_i}$. By Lemma \ref{lem:aut-1}(ii), we have $\phi(u)=\phi(p_i)^{t_i}$ for some positive integer $t_i$. Since $k\geq 2$, Proposition \ref{prop:deg} gives that $\deg(u)=\pi(u)+1=s_i$ and $\deg(\phi(u))=\pi(\phi(u))+1=t_i$. Then $\deg(u)=\deg(\phi(u))$ gives that $s_i=t_i$ and hence $\phi(u)=\phi(p_i)^{s_i}$. The second part that $\phi\left(\frac{n}{p_i^{s_i}}\right)=\frac{n}{\phi(p_i)^{s_i}}$ can be obtained from the following.

Let $p_j=\phi(p_i)$. Then $\alpha_i=\alpha_j$ by Corollary \ref{coro:sim}. We claim that $\phi\left(\frac{n}{p_i^{l}}\right)=\frac{n}{p_j^{l}}$ for $1\leq l\leq \alpha_i$. The proof is by induction on $l$. If $l=1$, then the claim follows from Lemma \ref{lem:aut-1}(i). Assume that $\phi\left(\frac{n}{p_i^{l}}\right)=\frac{n}{p_j^{l}}$ for $1\leq l\leq m<\alpha_i$. The neighbours of $p_i^{m+1}$ are precisely $\frac{n}{p_i}, \frac{n}{p_i^2},\ldots, \frac{n}{p_i^{m+1}}$ and that of $\phi\left(p_i^{m+1}\right)=\phi(p_i)^{m+1}=p_j^{m+1}$ are $\frac{n}{p_j}, \frac{n}{p_j^2},\ldots, \frac{n}{p_j^{m+1}}$. Since $\phi$ is one-one and the neighbours of $p_i^{m+1}$ are mapped to the neighbours of $\phi\left(p_i^{m+1}\right)$, the induction hypothesis then implies that $\phi\left(\frac{n}{p_i^{m+1}}\right)=\frac{n}{p_j^{m+1}}$. This proves the claim.
\end{proof}

\begin{proposition}\label{prop:aut}
Let $k\geq 2$ with $n\neq p_1^2p_2$ and $\phi$ be an automorphism of $\Upsilon_n$. Then for any vertex $p_1^{s_1}p_2^{s_2}\cdots p_k^{s_k}$ of $\Upsilon_n$, we have
$$\phi\left(p_1^{s_1}p_2^{s_2}\cdots p_k^{s_k}\right)=\phi(p_1)^{s_1}\phi(p_2)^{s_2}\cdots \phi(p_k)^{s_k}.$$
\end{proposition}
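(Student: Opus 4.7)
The plan is to reduce the statement to identifying each exponent $t_i$ in the factorization $\phi(w) = \phi(p_1)^{t_1} \phi(p_2)^{t_2} \cdots \phi(p_k)^{t_k}$ (provided by Lemma~\ref{lem:aut-1}(ii)) with the original exponent $s_i$. By Corollary~\ref{coro:sim}, $\phi$ permutes $\{p_1, \ldots, p_k\}$, so I write $\phi(p_i) = p_{\sigma(i)}$ for some permutation $\sigma$ of $[k]$ satisfying $\alpha_{\sigma(i)} = \alpha_i$; in particular $t_i$ is the exponent of $p_{\sigma(i)}$ in $\phi(w)$, and both $s_i$ and $t_i$ lie in $\{0, 1, \ldots, \alpha_i\}$. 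For indices with $s_i = 0$, Lemma~\ref{lem:aut-1}(ii) already forces $t_i = 0$, so I may restrict attention to indices $i$ with $s_i \geq 1$.

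The key idea is to detect $s_i$ graph-theoretically by probing $w$ against the vertices $n/p_i^t$ for $t \in \{1, \ldots, \alpha_i\}$; these all belong to $V(\Upsilon_n)$ since $k \geq 2$. A direct unpacking of the adjacency rule gives that $n \mid w \cdot (n/p_i^t)$ if and only if $p_i^t \mid w$, that is, $t \leq s_i$. Combined with the distinctness requirement for an edge, this yields the characterization
$$t \leq s_i \iff \bigl(w = n/p_i^t \text{ or } w \sim n/p_i^t\bigr), \qquad t \in \{1, \ldots, \alpha_i\}.$$
The same reasoning applied to $\phi(w)$ and the prime $p_{\sigma(i)}$ yields
$$t \leq t_i \iff \bigl(\phi(w) = n/p_{\sigma(i)}^t \text{ or } \phi(w) \sim n/p_{\sigma(i)}^t\bigr), \qquad t \in \{1, \ldots, \alpha_i\}.$$

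To conclude, I invoke Lemma~\ref{lem:aut-2}, which gives $\phi(n/p_i^t) = n/p_{\sigma(i)}^t$ for every $t \in \{1, \ldots, \alpha_i\}$. Since $\phi$ is an automorphism, both the equality condition and the adjacency condition on the right-hand sides of the two displays transfer faithfully across $\phi$, so $t \leq s_i$ if and only if $t \leq t_i$ for all $t \in \{1, \ldots, \alpha_i\}$. This forces $s_i = t_i$, completing the proof. The one delicate point I expect to be the main technical hurdle is the boundary case where $w$ itself happens to equal $n/p_i^{t_0}$ for some $t_0$; the ``$=$ or $\sim$'' phrasing above handles this cleanly because $w = n/p_i^{t_0}$ if and only if $\phi(w) = n/p_{\sigma(i)}^{t_0}$, so the distinctness exception passes through $\phi$ unchanged.
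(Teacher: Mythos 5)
Your overall strategy --- probing $w$ against the vertices $n/p_i^t$ and transporting the resulting adjacency data through $\phi$ via Lemma \ref{lem:aut-2} --- is exactly the mechanism of the paper's proof, but your displayed characterization
$$t \leq s_i \iff \bigl(w = n/p_i^t \text{ or } w \sim n/p_i^t\bigr)$$
is false, and the failure occurs precisely in the boundary case you flag at the end. The right-hand side is equivalent to ``$t \leq s_i$ \emph{or} $w = n/p_i^t$'', and the extra disjunct is not implied by $t\le s_i$: if $w=n/p_i^t$ then $s_i=\alpha_i-t$, so $t\le s_i$ fails whenever $2t>\alpha_i$. Concretely, take $n=p_1^3p_2$, $w=p_1p_2=n/p_1^2$, $i=1$, $t=2$: the right-hand side holds but $s_1=1<2$. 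Your proposed repair --- that the distinctness exception ``passes through $\phi$ unchanged'' --- amounts to cancelling a common disjunct from both sides of a biconditional, and $(P\vee R)\Leftrightarrow(Q\vee R)$ does not yield $P\Leftrightarrow Q$: at the single value $t_0$ where $w=n/p_i^{t_0}$ (equivalently $\phi(w)=n/p_{\sigma(i)}^{t_0}$) both sides are true for trivial reasons and give no comparison between $s_i$ and $t_i$, so the asserted conclusion ``$t\le s_i$ iff $t\le t_i$ for all $t$'' does not follow as written.

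The gap is repairable, and the repair is essentially the paper's Claims 1 and 2. The cleanest fix is to split into two cases. If $w=n/p_i^{t_0}$ for some $t_0$, then Lemma \ref{lem:aut-2} gives $\phi(w)=n/p_{\sigma(i)}^{t_0}$ outright, whence $t_i=\alpha_{\sigma(i)}-t_0=\alpha_i-t_0=s_i$ with no adjacency argument needed. Otherwise $w\ne n/p_i^t$ for every $t$ (and then $\phi(w)\ne n/p_{\sigma(i)}^t$ for every $t$, by injectivity of $\phi$ together with Lemma \ref{lem:aut-2}), so both of your characterizations are genuinely correct and the chain of equivalences closes. The paper handles the same difficulty by arguing from the contradiction hypothesis $s_i\ne t_i$: its Claims 1 and 2 rule out the two boundary identities $u=n/p_i^{s_i}$ and $\phi(u)=n/\phi(p_i)^{t_i}$, after which it probes only at $t=s_i$ and $t=t_i$ rather than at all $t$. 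Either way the content is the same; you just need to make the boundary case an explicit separate branch rather than absorbing it into the biconditional.
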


\begin{proof}
Let $u=p_1^{s_1}p_2^{s_2}\cdots p_k^{s_k}$. By Lemma \ref{lem:aut-1}(ii), we have $\phi(u)=\phi(p_1)^{t_1}\phi(p_2)^{t_2}\cdots \phi(p_k)^{t_k}$, where $t_i\geq 1$ if and only $s_i\geq 1$ for $1\leq i\leq k$.
Suppose that $s_i\neq t_i$ for some $i$. Then $s_i\geq 1$ and $t_i\geq 1$. Let $\phi(p_i)=p_j$. Then $\alpha_i=\alpha_j$ by Corollary \ref{coro:sim}.

{\bf Claim 1:} $u\neq \frac{n}{p_i^{s_i}}$. If possible, suppose that $u=\frac{n}{p_i^{s_i}}$. Then $\alpha_i=2s_i$. By Lemma \ref{lem:aut-2}, we have $\phi(u)=\phi\left(\frac{n}{p_i^{s_i}}\right)=\frac{n}{p_j^{s_i}}$. This gives $s_i+t_i=\alpha_j$. Then $\alpha_j=\alpha_i=2s_i$ implies that $s_i=t_i$, contradicting our assumption.

{\bf Claim 2:} $\phi(u)\neq \frac{n}{\phi(p_i)^{t_i}}$. If possible, suppose that $\phi(u)= \frac{n}{\phi(p_i)^{t_i}}$. Then $\alpha_j=2t_i$. Since $\phi\left(\frac{n}{p_i^{t_i}}\right)=\frac{n}{\phi(p_i)^{t_i}}$ by Lemma \ref{lem:aut-2}, injectivity of $\phi$ gives $u=\frac{n}{p_i^{t_i}}$. This implies $s_i+t_i=\alpha_i$. Then $\alpha_i=\alpha_j$ gives that $s_i=t_i$, contradicting our assumption.

Since $u \sim \frac{n}{p_i^{s_i}}$, we have $\phi(u)\sim \phi\left(\frac{n}{p_i^{s_i}}\right)=\frac{n}{\phi\left(p_i\right)^{s_i}}$ (Lemma \ref{lem:aut-2}). This implies $s_i< t_i$ (as $s_i\neq t_i$).
Since $\phi(u)\sim \frac{n}{\phi(p_i)^{t_i}}=\phi\left(\frac{n}{p_i^{t_i}}\right)$, we get $u\sim \frac{n}{p_i^{t_i}}$. But this is not possible as $s_i< t_i$. Therefore, $s_i=t_i$ for all $i\in [k]$ and hence $\phi(u)=\phi(p_1)^{s_1}\phi(p_2)^{s_2}\cdots \phi(p_k)^{s_k}$.
\end{proof}

\begin{corollary}
If $k\geq 2$ and  $\alpha_1> \alpha_2> \cdots > \alpha_k\geq 1$, then $|\Aut(\Upsilon_n)|=2$ or $1$ according as $n=p_1^2p_2$ or not.
\end{corollary}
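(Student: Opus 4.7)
The plan is to split on whether $n = p_1^2p_2$ or not, and in each case read off the answer from the results already established in this section.

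First consider the generic case $n \neq p_1^2p_2$. Let $\phi \in \Aut(\Upsilon_n)$. By Corollary \ref{coro:sim}, $\phi$ permutes the pendant vertices $p_1, p_2, \ldots, p_k$, and whenever $\phi(p_i) = p_j$ we must have $\alpha_i = \alpha_j$. The hypothesis $\alpha_1 > \alpha_2 > \cdots > \alpha_k$ makes the exponents pairwise distinct, so this condition forces $j = i$; that is, $\phi(p_i) = p_i$ for every $i \in [k]$. Proposition \ref{prop:aut} (which applies since $n \neq p_1^2 p_2$) then yields, for every vertex $p_1^{s_1}p_2^{s_2}\cdots p_k^{s_k}$ of $\Upsilon_n$,
$$\phi\left(p_1^{s_1}p_2^{s_2}\cdots p_k^{s_k}\right) = \phi(p_1)^{s_1}\phi(p_2)^{s_2}\cdots \phi(p_k)^{s_k} = p_1^{s_1}p_2^{s_2}\cdots p_k^{s_k},$$
so $\phi$ is the identity. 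Hence $|\Aut(\Upsilon_n)| = 1$.

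It remains to handle the exceptional case $n = p_1^2 p_2$, where the hypothesis $\alpha_1 > \alpha_2 \geq 1$ is still satisfied (with $\alpha_1 = 2$, $\alpha_2 = 1$). Here one simply computes $\Upsilon_n$ directly: its four vertices are $p_1, p_2, p_1^2, p_1p_2$, and checking the divisibility condition $n \mid uv$ for each pair shows that the edges are exactly $\{p_1, p_1p_2\}$, $\{p_1p_2, p_1^2\}$, $\{p_1^2, p_2\}$. Thus $\Upsilon_n$ is isomorphic to the path $P_4$, whose automorphism group has order two. This gives $|\Aut(\Upsilon_n)| = 2$.

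There is no real obstacle here: all the structural content has been absorbed into Corollary \ref{coro:sim} and Proposition \ref{prop:aut}, and the only step requiring new work is the four-vertex hand computation that identifies $\Upsilon_{p_1^2p_2}$ with $P_4$.
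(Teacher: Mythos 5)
Your proof is correct and follows essentially the same route as the paper: for $n\neq p_1^2p_2$ it combines Corollary \ref{coro:sim} (distinct exponents force every automorphism to fix each pendant vertex $p_i$) with Proposition \ref{prop:aut} to conclude the automorphism is the identity, and for $n=p_1^2p_2$ it identifies $\Upsilon_n$ as the path on four vertices, whose automorphism group has order two. The only difference is that you write out the four-vertex adjacency check explicitly, which the paper leaves as an observation.
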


\begin{proof}
If $n=p_1^2p_2$, then $\Upsilon_n$ is a path of length three and hence $|\Aut(\Upsilon_n)|=2$. So assume that $n\neq p_1^2p_2$. Let $\phi$ be an automorphism of $\Upsilon_n$. Since $\alpha_1> \alpha_2> \cdots > \alpha_k$, $\phi$ fixes each of the pendant vertices $p_1,p_2,\ldots,p_k$ by Corollary \ref{coro:sim}. Then Proposition \ref{prop:aut} implies that each vertex of $\Upsilon_n$ is fixed by $\phi$ and so $\phi$ is the identity map.
\end{proof}

\begin{corollary}\label{coro-aut-1}
If $k\geq 2$ and two automorphisms of $\Upsilon_n$ agree on the pendant vertices, then they are equal.
\end{corollary}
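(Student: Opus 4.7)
The plan is to reduce the statement directly to Proposition \ref{prop:aut}, with a separate inspection of the exceptional case $n = p_1^2 p_2$. Let $\phi, \psi \in \Aut(\Upsilon_n)$ agree on the pendant vertices. By Proposition \ref{prop:pendant}(iii) the pendants are precisely $p_1, p_2, \ldots, p_k$, so the hypothesis reads $\phi(p_i) = \psi(p_i)$ for every $i \in [k]$, and the goal is to conclude $\phi(u) = \psi(u)$ for every vertex $u$ of $\Upsilon_n$.

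Assume first that $n \neq p_1^2 p_2$. For any vertex $u = p_1^{s_1} p_2^{s_2}\cdots p_k^{s_k}$ of $\Upsilon_n$, Proposition \ref{prop:aut} applied to both $\phi$ and $\psi$ gives
$$\phi(u) = \phi(p_1)^{s_1}\phi(p_2)^{s_2}\cdots \phi(p_k)^{s_k} = \psi(p_1)^{s_1}\psi(p_2)^{s_2}\cdots \psi(p_k)^{s_k} = \psi(u),$$
where the middle equality uses only that $\phi$ and $\psi$ coincide on each pendant $p_i$. Hence $\phi = \psi$ in this case.

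For the exceptional case $n = p_1^2 p_2$, the graph $\Upsilon_n$ is the path $p_1 \sim p_1 p_2 \sim p_1^2 \sim p_2$ on four vertices, as noted in the proof of the previous corollary. Since $p_1 p_2$ is the unique neighbour of the pendant $p_1$ and $p_1^2$ is the unique neighbour of the pendant $p_2$, the values of $\phi$ and $\psi$ on the two pendants determine their values on the two internal vertices, so again $\phi = \psi$. I do not foresee any genuine obstacle: the substance of the corollary is already encoded in Proposition \ref{prop:aut}, and the only task is to observe that its one excluded configuration is handled by a one-line inspection.
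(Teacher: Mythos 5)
Your proof is correct and follows essentially the route the paper intends: the corollary is stated there without proof as an immediate consequence of Proposition \ref{prop:aut}, exactly as in your first paragraph. Your explicit treatment of the excluded case $n=p_1^2p_2$ (where $\Upsilon_n$ is the path $p_1\sim p_1p_2\sim p_1^2\sim p_2$ and the images of the internal vertices are forced as unique neighbours of the images of the pendants) is a welcome bit of care that the paper leaves implicit.
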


\begin{lemma}\label{lem:aut-3}
Let $k\geq 2$ and $\alpha_{i_1}=\alpha_{i_2}=\cdots =\alpha_{i_a}$ for some subset $A:=\{i_1,i_2,\ldots,i_a\}$ of $[k]$. Given a permutation $\tau$ of $\{p_{i_1},p_{i_2},\ldots,p_{i_a}\}$, define the map $\overline{\tau}:\Upsilon_n\rightarrow \Upsilon_n$ by
$$\overline{\tau}\left(p_1^{s_1}p_2^{s_2}\cdots p_k^{s_k}\right)=\left(\underset{i_j\in A}\prod \tau(p_{i_j})^{s_{i_j}}\right)\left(\underset{l\in [k]\setminus A}\prod p_{l}^{s_{l}}\right)$$
for $p_1^{s_1}p_2^{s_2}\cdots p_k^{s_k}\in V(\Upsilon_n)$. Then $\overline{\tau}$ is an automorphism of $\Upsilon_n$.
\end{lemma}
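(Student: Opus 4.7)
The plan is to verify three things in turn: that $\overline{\tau}$ sends $V(\Upsilon_n)$ into itself, that it is a bijection, and that it preserves adjacency. Throughout, I will exploit the crucial hypothesis that the exponents $\alpha_{i_1},\ldots,\alpha_{i_a}$ are all equal; write $\alpha$ for their common value, and let $\sigma$ be the permutation of $A$ determined by $\tau(p_{i_j})=p_{\sigma(i_j)}$. Then $\alpha_i=\alpha_{\sigma(i)}=\alpha$ for every $i\in A$.

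First I would check well-definedness. Given a vertex $w=p_1^{s_1}\cdots p_k^{s_k}$ of $\Upsilon_n$ (so $0\le s_i\le\alpha_i$ with $w\notin\{1,n\}$), the image $\overline{\tau}(w)$ has exponent $s_i$ at $p_i$ for $i\notin A$, and exponent $s_{\sigma^{-1}(i)}$ at $p_i$ for $i\in A$. In each case the exponent is at most $\alpha_i$, because for $i\in A$ it is bounded by $\alpha_{\sigma^{-1}(i)}=\alpha=\alpha_i$, so $\overline{\tau}(w)$ divides $n$. The multiset of exponents appearing across the indices of $A$ is merely permuted by $\sigma$, so $\overline{\tau}(w)=1$ iff $w=1$ and $\overline{\tau}(w)=n$ iff $w=n$. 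Hence $\overline{\tau}(w)\in V(\Upsilon_n)$.

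Next, bijectivity is automatic once we notice that $\overline{\tau^{-1}}\circ\overline{\tau}$ fixes every vertex: applying the formula twice, the exponent at $p_i$ for $i\in A$ is sent $s_i\mapsto s_{\sigma^{-1}(i)}\mapsto s_{\sigma^{-1}(\sigma(i))}=s_i$, and exponents at indices outside $A$ are untouched. So $\overline{\tau^{-1}}=\overline{\tau}^{-1}$.

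The main point is preservation of adjacency, and here the key reformulation is that for $u=p_1^{s_1}\cdots p_k^{s_k}$ and $v=p_1^{t_1}\cdots p_k^{t_k}$ in $V(\Upsilon_n)$, one has $u\sim v$ iff $n\mid uv$ iff $s_i+t_i\ge\alpha_i$ for every $i\in[k]$. Applying $\overline{\tau}$, the exponent of $p_i$ in $\overline{\tau}(u)\overline{\tau}(v)$ equals $s_i+t_i$ for $i\notin A$, and $s_{\sigma^{-1}(i)}+t_{\sigma^{-1}(i)}$ for $i\in A$. Since $\alpha_i=\alpha_{\sigma^{-1}(i)}$ for every $i\in A$, the conditions
\[
\{\,s_i+t_i\ge\alpha_i\ \text{for}\ i\notin A\,\}\ \cup\ \{\,s_{\sigma^{-1}(i)}+t_{\sigma^{-1}(i)}\ge\alpha_i\ \text{for}\ i\in A\,\}
\]
reindex (replacing $i$ by $\sigma(j)$ for $j\in A$) to the system $s_j+t_j\ge\alpha_j$ for all $j\in[k]$. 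Therefore $n\mid\overline{\tau}(u)\overline{\tau}(v)$ iff $n\mid uv$, and we further have to observe that $\overline{\tau}(u)\ne\overline{\tau}(v)$ whenever $u\ne v$, which follows from injectivity established above. This closes the verification that $\overline{\tau}\in\Aut(\Upsilon_n)$. No step is really an obstacle; the only care needed is in handling the indices of the permutation $\sigma$ correctly when translating the divisibility condition through $\overline{\tau}$.
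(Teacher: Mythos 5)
Your proof is correct and follows essentially the same route as the paper: check well-definedness via $\alpha_i=\alpha_{\sigma^{-1}(i)}$, establish bijectivity through $\tau^{-1}$ (you package it as the explicit inverse $\overline{\tau^{-1}}$, the paper proves injectivity and surjectivity separately, but it is the same idea), and preserve adjacency by rewriting $u\sim v$ as $s_i+t_i\geq\alpha_i$ for all $i\in[k]$ and noting that $\sigma$ merely reindexes these conditions over $A$.
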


\begin{proof}
If $\tau(p_{i_j})=p_{i_r}$ for $i_j,i_r\in A$, then $s_{i_j}\leq \alpha_{i_j}=\alpha_{i_r}$ and so $\overline{\tau}$ is well-defined.

Let $u=p_1^{s_1}p_2^{s_2}\cdots p_k^{s_k}$ and $v= p_1^{t_1}p_2^{t_2}\cdots p_k^{t_k}$ be two vertices of $\Upsilon_n$. If $\overline{\tau}\left(u\right)=\overline{\tau}\left(v\right)$, then comparing the prime powers on both sides we get $s_{i_j}=t_{i_j}$ for $i_j\in A$ and $s_l=t_l$ for $l\in [k]\setminus A$. So $s_r=t_r$ for $r\in[k]$ and hence $\overline{\tau}$ is injective. Write $v=\left(\underset{i_j\in A}\prod p_{i_j}^{t_{i_j}}\right)\left(\underset{l\in [k]\setminus A}\prod p_{l}^{t_{l}}\right)$and define
$$w:=\left(\underset{i_j\in A}\prod \tau^{-1}(p_{i_j})^{t_{i_j}}\right)\left(\underset{l\in [k]\setminus A}\prod p_{l}^{t_{l}}\right).$$
If $\tau^{-1}(p_{i_j})=p_{i_r}$ for some $i_r\in A$, then $t_{i_j}\leq \alpha_{i_j}=\alpha_{i_r}$ and so $w$ is a vertex of $\Upsilon_n$. Now it is clear that $\overline{\tau}(w)=v$, implying $\overline{\tau}$ is surjective.

Since $\alpha_{i_1}=\cdots =\alpha_{i_a}$, it can be observed that $\overline{\tau}(u)\sim \overline{\tau}(v)$ if and only if $s_r+t_r\geq \alpha_r$ for all $r\in[k]$. The later holds if and only if $u\sim v$. Hence $u\sim v$ if and only if $\overline{\tau}(u)\sim \overline{\tau}(v)$. Thus $\overline{\tau}$ is an automorphism of $\Upsilon_n$.
\end{proof}

In the following proposition, we determine the full automorphism group of $\Upsilon_n$ when $k\geq 2$ with $n\neq p_1^2p_2$.
Let $\alpha_{r_1},\alpha_{r_2},\ldots,\alpha_{r_b}$ be the distinct integers in the list $\alpha_1, \alpha_2, \ldots, \alpha_k$. For $1\leq i\leq b$, define
$$A_{r_i}:=\{j\in [k]: \alpha_j=\alpha_{r_i}\}$$
and set $|A_{r_i}|=k_i$. Then $A_{r_1}\cup A_{r_2}\cup\cdots\cup A_{r_b}$ is a partition of $[k]$ and so $k_1+k_2+\cdots +k_b=k$. For a given positive integer $m$, $S_m$ denotes the symmetric group defined on the set $[m]$.

\begin{proposition}\label{prop:aut-full}
Let $k\geq 2$ with $n\neq p_1^2p_2$. Then $\Aut(\Upsilon_n)\cong S_{k_1}\times S_{k_2}\times \cdots \times S_{k_b}$, where the integers $k_1,k_2,\ldots,k_b$ are as defined above.
\end{proposition}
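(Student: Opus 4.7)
The plan is to produce an explicit isomorphism by restricting automorphisms to the set of pendant vertices. First I would define the map $\Phi:\Aut(\Upsilon_n)\to \mathrm{Sym}(\{p_1,\ldots,p_k\})$ sending each $\phi$ to $\phi|_{\{p_1,\ldots,p_k\}}$, which is well-defined since $\phi$ permutes the pendant vertices (Proposition \ref{prop:pendant}(iii)). Clearly $\Phi$ is a group homomorphism, and it is injective by Corollary \ref{coro-aut-1}. Moreover, by Corollary \ref{coro:sim}, every permutation in the image sends $p_i$ to some $p_j$ with $\alpha_i=\alpha_j$, and so preserves the partition $A_{r_1} \sqcup \cdots \sqcup A_{r_b}$ of $[k]$. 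Therefore $\mathrm{Im}(\Phi)$ is contained in the subgroup of $\mathrm{Sym}(\{p_1,\ldots,p_k\})$ that stabilizes the set partition into blocks indexed by $A_{r_1}, \ldots, A_{r_b}$, which is naturally isomorphic to $S_{k_1} \times S_{k_2} \times \cdots \times S_{k_b}$.

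The nontrivial content is to prove the reverse inclusion, i.e., that $\Phi$ surjects onto this product group. Given any block-preserving permutation $\tau$ of $\{p_1, \ldots, p_k\}$, I would decompose $\tau = \tau_1 \tau_2 \cdots \tau_b$, where each $\tau_i$ permutes $\{p_j : j \in A_{r_i}\}$ and fixes the remaining pendant vertices. Since $\alpha_j = \alpha_{r_i}$ for all $j \in A_{r_i}$, Lemma \ref{lem:aut-3} applies to $\tau_i$ and yields an automorphism $\overline{\tau_i}\in \Aut(\Upsilon_n)$ whose restriction to the pendants is $\tau_i$. The composition $\overline{\tau_1}\,\overline{\tau_2} \cdots \overline{\tau_b}$ is then an automorphism whose restriction to pendant vertices equals $\tau_1\tau_2\cdots\tau_b = \tau$, establishing surjectivity. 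Combining with injectivity and the identification of the image, $\Phi$ is an isomorphism $\Aut(\Upsilon_n) \to S_{k_1} \times \cdots \times S_{k_b}$.

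The main potential obstacle is pinning down the image of $\Phi$ precisely: the upper bound on the image requires Corollary \ref{coro:sim}, while the lower bound requires an explicit construction of automorphisms, both of which are already available in the excerpt. A subtle point worth verifying carefully is that the composition $\overline{\tau_1}\cdots\overline{\tau_b}$ indeed restricts to $\tau$ on the pendants; this holds because for each $i$ the map $\overline{\tau_i}$ fixes every prime $p_j$ with $j \notin A_{r_i}$, so on pendant vertices the $\overline{\tau_i}$'s have pairwise disjoint support and their compositional order is immaterial. With these observations the argument becomes essentially bookkeeping on top of Lemmas \ref{lem:aut-1}--\ref{lem:aut-3} and Corollaries \ref{coro:sim}--\ref{coro-aut-1}.
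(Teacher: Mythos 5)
Your proposal is correct and follows essentially the same route as the paper: the paper also restricts automorphisms to the pendant vertices, uses Corollary \ref{coro:sim} to confine the image to block-preserving permutations, Corollary \ref{coro-aut-1} for injectivity, and Lemma \ref{lem:aut-3} together with composition for surjectivity. The only cosmetic difference is that the paper maps directly into $\mathrm{Sym}(X_{r_1})\times\cdots\times\mathrm{Sym}(X_{r_b})$ rather than into $\mathrm{Sym}(\{p_1,\ldots,p_k\})$ with the image identified afterwards.
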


\begin{proof}
For $1\leq i\leq b$, consider the sets $A_{r_i}$ as defined above and let $X_{r_i}:=\{p_j: j\in A_{r_i}\}$. Then $X_{r_1}\cup X_{r_2}\cup\cdots\cup X_{r_b}$ is a partition of the set $X=\{p_1,p_2,\ldots, p_k\}$. Given $\phi\in \Aut(\Upsilon_n)$, let $\phi_{r_i}$ denote the restriction of $\phi$ to $X_{r_i}$. Then $\phi_{r_i}$ is a permutation of $X_{r_i}$ by Corollary \ref{coro:sim}. This gives $(\phi_{r_1},\ldots,\phi_{r_b})\in Sym(X_{r_1})\times \cdots \times Sym(X_{r_b})$, where $Sym(X_{r_i})$ is the symmetric group defined on $X_{r_i}$. Thus the map $f:\Aut(\Upsilon_n)\rightarrow Sym(X_{r_1})\times \cdots \times Sym(X_{r_b})$ taking $\phi$ to $(\phi_{r_1},\phi_{r_2},\ldots,\phi_{r_b})$ is well defined. We prove that $f$ is a group isomorphism.

Let $\phi,\psi\in \Aut(\Upsilon_n)$. We claim that $f(\phi\psi)=f(\phi)f(\psi)$. It is enough to show that $(\phi\psi)_{r_i}=\phi_{r_i}\psi_{r_i}$ for $1\leq i\leq b$. Indeed, for $p_j\in X_{r_i}$, we have $$(\phi\psi)_{r_i}(p_j)=(\phi\psi)(p_j)=\phi(\psi(p_j))=\phi(\psi_{r_i}(p_j))=\phi_{r_i}(\psi_{r_i}(p_j))=(\phi_{r_i}\psi_{r_i})(p_j).$$
Thus $f$ is a group homomorphism. Corollary \ref{coro-aut-1} implies that $f$ is injective. Consider $(\tau_1,\tau_2,\ldots,\tau_b)\in Sym(X_{r_1})\times \cdots \times Sym(X_{r_b})$. For $1\leq i\leq b$, let $\overline{\tau}_i$ be the automorphism of $\Upsilon_n$ as obtained in Lemma \ref{lem:aut-3}. Define $\overline{\tau}:=\overline{\tau}_1 \overline{\tau}_2\cdots\overline{\tau}_b$, the composition of $\overline{\tau}_1, \overline{\tau}_2,\ldots,\overline{\tau}_b$. Then $\overline{\tau}\in \Aut(\Upsilon_n)$ and observe that $\overline{\tau}_{r_i}=\tau_i$ for each $i\in [b]$. This gives $f(\overline{\tau})=(\tau_1,\tau_2,\ldots,\tau_b)$ and hence $f$ is surjective.

Thus $f$ is a group isomorphism and so $\Aut(\Upsilon_n)\cong Sym(X_{r_1})\times \cdots \times Sym(X_{r_b})$. Since $|X_{r_i}|=|A_{r_i}|=k_i$, we have $Sym(X_{r_i})\cong S_{k_i}$ for every $i\in [b]$ and hence the result follows.
\end{proof}

\section{Graph parameters of $\Upsilon_n$}\label{parameters}

In this section, we shall determine the graph parameters clique number, chromatic number, chromatic index, domination number,  independence number, matching number, vertex and edge covering numbers of $\Upsilon_n$.

\subsection{Clique number $\omega(\Upsilon_n)$}\label{subsec:clique}

Let $n=p_1^{\alpha_1}p_2^{\alpha_2}\cdots p_k^{\alpha_k}$ be the prime power factorization of $n$ such that the integers $\alpha_1,\ldots,\alpha_l$ are odd and $\alpha_{l+1},\ldots, \alpha_k$ are even for some $l\in\{0,1,2,\ldots,k\}$. Consider the subsets $A$ and $B$ of $V(\Upsilon_n)$ as defined below:
\begin{align*}
A:=&\left\{p_1^{r_1}p_2^{r_2}\cdots p_k^{r_k}: \lceil \alpha_i/2\rceil \leq r_i \leq \alpha_i, 1\leq i \leq k \right\}\setminus\{n\},\\
B:= & \left\{\frac{n}{p_j^{\lceil \alpha_j/2\rceil}}: 1\leq j \leq l\right\},
\end{align*}
where $B=\Phi$ if $l=0$. Let $\mathcal{K}$ denote the induced subgraph of $\Upsilon_n$ with vertex set $A\cup B$. Observe that $A$ and $B$ are disjoint, and that any two distinct vertices in $A\cup B$ are adjacent. Thus $\mathcal{K}$ is a clique in $\Upsilon_n$ with
\begin{equation}\label{eq:clique1}
|V(\mathcal{K})|=|A|+|B|=\left\lceil \frac{\alpha_1}{2}\right\rceil \left\lceil \frac{\alpha_2}{2}\right\rceil \cdots \left\lceil \frac{\alpha_l}{2}\right\rceil \left(\frac{\alpha_{l+1}}{2}+1\right) \cdots \left(\frac{\alpha_k}{2}+1\right) +l-1.
\end{equation}

\begin{proposition}\label{prop:clique}
Let $n=p_1^{\alpha_1}p_2^{\alpha_2}\cdots p_k^{\alpha_k}$ be the prime power factorization of $n$ such that the integers $\alpha_1,\ldots,\alpha_l$ are odd and $\alpha_{l+1},\ldots, \alpha_k$ are even for some $l\in\{0,1,2,\ldots,k\}$. Then
$$\omega(\Upsilon_n)= \left\lceil \frac{\alpha_1}{2}\right\rceil \left\lceil \frac{\alpha_2}{2}\right\rceil\cdots \left\lceil \frac{\alpha_l}{2}\right\rceil \left(\frac{\alpha_{l+1}}{2}+1\right) \cdots \left(\frac{\alpha_k}{2}+1\right) +l-1.$$
\end{proposition}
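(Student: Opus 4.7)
The lower bound $\omega(\Upsilon_n) \geq |V(\mathcal{K})|$ is already established by the construction of $\mathcal{K}$ immediately preceding the statement, so my task is only to prove the matching upper bound $|\mathcal{C}| \leq |V(\mathcal{K})|$ for every clique $\mathcal{C}$ in $\Upsilon_n$. The plan is a counting argument on the valuations $v_{p_i}(u)$ of the vertices $u \in \mathcal{C}$; recall that adjacency $u \sim v$ is equivalent to $v_{p_i}(u) + v_{p_i}(v) \geq \alpha_i$ for every $i$. Throughout I write $a_i := \lceil \alpha_i/2 \rceil$.

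The first step is a structural lemma. For each $i \in [k]$ set $M_i := \{u \in \mathcal{C} : v_{p_i}(u) < a_i\}$. Then $|M_i| \leq 1$, since two such vertices would give valuations summing to at most $2a_i - 2 < \alpha_i$, breaking adjacency. Moreover, when $\alpha_i$ is even and $M_i \neq \emptyset$, the adjacency condition forces every $v \in \mathcal{C} \setminus M_i$ to satisfy the stricter bound $v_{p_i}(v) \geq a_i + 1$ (because $v_{p_i}(u) \leq a_i - 1$ for the unique $u \in M_i$). This parity asymmetry is the heart of the argument and mirrors the shape of the formula.

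Next, set $\mathcal{C}_0 := \{u \in \mathcal{C} : v_{p_i}(u) \geq a_i \text{ for every } i\}$, $E := \{i : \alpha_i \text{ even}, M_i \neq \emptyset\}$ and $O := \{i : \alpha_i \text{ odd}, M_i \neq \emptyset\}$. By the lemma, each $u \in \mathcal{C}_0$ in fact satisfies $v_{p_i}(u) \geq a_i + 1$ for every $i \in E$, while $|\mathcal{C} \setminus \mathcal{C}_0| \leq |E| + |O|$. Counting proper divisors of $n$ meeting these constraints yields
$$|\mathcal{C}_0| \;\leq\; P \cdot \prod_{i \in E} \frac{\alpha_i}{2} \;-\; 1, \qquad P := \prod_{i : \alpha_i \text{ odd}} \left\lceil \frac{\alpha_i}{2} \right\rceil \cdot \prod_{\substack{i : \alpha_i \text{ even} \\ i \notin E}} \left( \frac{\alpha_i}{2} + 1 \right),$$
where the $-1$ excludes $n$ itself.

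Since $|V(\mathcal{K})| = P \cdot \prod_{i \in E}(\alpha_i/2 + 1) + l - 1$ and $|O| \leq l$, the required inequality $|\mathcal{C}| \leq |V(\mathcal{K})|$ reduces to the elementary estimate $P \cdot \bigl[\prod_{i \in E}(m_i + 1) - \prod_{i \in E} m_i\bigr] \geq |E|$ where $m_i := \alpha_i/2 \geq 1$. The expansion $\prod_{i \in E}(m_i+1) - \prod_{i \in E} m_i = \sum_{\emptyset \neq T \subseteq E} \prod_{i \in E \setminus T} m_i$ already contributes $|E|$ from the $|E|$ singleton terms alone, since each factor $m_i$ is at least $1$ and $P \geq 1$. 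The step I expect to be the main obstacle is the second half of the structural lemma, namely the strict gain $v_{p_i} \geq a_i + 1$ for $i \in E$: this is precisely what compensates for the $|E|$ vertices sitting outside $\mathcal{C}_0$ and forces the combinatorics to match the claimed formula, with the remaining slack of $l - 1$ accounted for entirely by the odd indices (paralleling the vertices of $B$).
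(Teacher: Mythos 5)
Your proof is correct. The paper establishes the same upper bound by building an explicit injection $\phi\colon V(H)\to A\cup B$ for an arbitrary clique $H$: vertices with $v_{p_i}\geq\lceil\alpha_i/2\rceil$ for all $i$ are mapped to themselves in $A$; the (unique) vertex deficient at an odd index $j$ is sent to $\frac{n}{p_j^{\lceil\alpha_j/2\rceil}}\in B$; and the vertex deficient at an even index $j$ is sent to the spare element $p_1^{\alpha_1}\cdots p_{j-1}^{\alpha_{j-1}}p_j^{\alpha_j/2}p_{j+1}^{\alpha_{j+1}}\cdots p_k^{\alpha_k}$ of $A$, which is available precisely because of the parity observation you isolate (no other clique vertex can have exponent exactly $\alpha_j/2$ at position $j$). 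So both arguments rest on the identical two structural facts---at most one clique vertex lies below the half-threshold at each prime, and for even $\alpha_i$ such a vertex forces every other clique vertex strictly above $\alpha_i/2$---but you package them as a union bound plus a divisor count ($|\mathcal{C}|\leq|\mathcal{C}_0|+|E|+|O|$ followed by the telescoping estimate $\prod_{i\in E}(m_i+1)-\prod_{i\in E}m_i\geq|E|$), whereas the paper packages them as well-definedness and injectivity of a single map. Your version avoids the bookkeeping needed to verify that the various images under $\phi$ are pairwise distinct, at the cost of an extra (easy) arithmetic inequality; the paper's version gives a little more, namely an explicit embedding of any clique into $\mathcal{K}$. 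One cosmetic remark: in your closing sentence the available slack is $l-|O|$ rather than $l-1$, but this does not affect the argument since you already invoked $|O|\leq l$ where it matters.
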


\begin{proof}
Consider the clique $\mathcal{K}$ in $\Upsilon_n$ defined above with vertex set $A\cup B$. Let $H$ be an arbitrary clique in $\Upsilon_n$. We prove that $|V(H)|\leq |V(\mathcal{K})|$ and then (\ref{eq:clique1}) would complete the proof. It is enough to show that there exists an injective map $\phi: V(H)\rightarrow V(\mathcal{K})$.

Let $y=p_1^{s_1}\cdots p_l^{s_l}p_{l+1}^{s_{l+1}}\cdots p_k^{s_k}$ be a vertex of $H$.
If $\lceil \alpha_i/2\rceil \leq s_i \leq \alpha_i$ for every $i\in [k]$, then $y$ is a vertex of $\mathcal{K}$ that is contained in $A$. In this case, we define $\phi(y):=y\in A$.

Suppose that $s_i < \lceil \frac{\alpha_i}{2} \rceil$ for some $i\in [k]$. Let $j\in [k]$ be the smallest integer such that $s_j < \lceil \frac{\alpha_j}{2} \rceil$.
For every vertex $z=p_1^{t_1}\cdots p_l^{t_l}p_{l+1}^{t_{l+1}}\cdots p_k^{t_k}\in V(H)\setminus\{y\}$, the fact that $y\sim z$ gives
$$t_j\geq
\begin{cases}
\lceil \frac{\alpha_j}{2} \rceil & \text{ if } j\leq l;\\
\lceil \frac{\alpha_j}{2}\rceil +1= \frac{\alpha_j}{2}+1 & \text{ if } j\geq l+1.
\end{cases}$$
Thus, $y$ is the only vertex of $H$ with $s_j < \lceil \frac{\alpha_j}{2} \rceil$. If $j\leq l$, we define
$$\phi(y):=\frac{n}{p_j^{\lceil \alpha_j/2\rceil}}\in B.$$
If $j\geq l+1$, then there is no such vertex $z$ of $H$ with $t_j=\lceil \frac{\alpha_j}{2}\rceil=\frac{\alpha_j}{2}$. In this case, we define
$$\phi(y):=p_1^{\alpha_1}\cdots p_{j-1}^{\alpha_{j-1}}p_j^{\alpha_j/2}p_{j+1}^{\alpha_{j+1}}\cdots p_k^{\alpha_k}\in A.$$
It follows from the construction of the map $\phi: V(H)\rightarrow V(\mathcal{K})$ that $\phi$ is well-defined and it is one-one.
\end{proof}

As a consequence of Proposition \ref{prop:clique}, we have the following:

\begin{corollary}\label{coro:clique}
Let $n=p_1^{\alpha_1}p_2^{\alpha_2}\cdots p_k^{\alpha_k}$. Then $\omega(\Upsilon_n)\geq k$.
\end{corollary}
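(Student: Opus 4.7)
The plan is to derive this directly from the exact formula established in Proposition \ref{prop:clique}, by lower-bounding each factor in the product. Writing
$$\omega(\Upsilon_n)= \prod_{i=1}^{l}\left\lceil \frac{\alpha_i}{2}\right\rceil \cdot \prod_{i=l+1}^{k}\left(\frac{\alpha_i}{2}+1\right) +l-1,$$
I would observe that each $\lceil \alpha_i/2\rceil$ with $i\in [l]$ is at least $1$ (since $\alpha_i\geq 1$), while each $\alpha_i/2+1$ with $l+1\leq i\leq k$ is at least $2$ (since $\alpha_i\geq 2$ when $\alpha_i$ is even and positive). Hence the product term is bounded below by $2^{k-l}$, yielding the inequality
$$\omega(\Upsilon_n)\geq 2^{k-l}+l-1.$$

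The remaining task is the elementary estimate $2^{k-l}+l-1\geq k$, equivalently $2^{m}\geq m+1$ with $m:=k-l\geq 0$. This is immediate by induction on $m$ (or by the binomial expansion $2^m=\sum_j \binom{m}{j}\geq 1+m$). I would state it as a one-line observation rather than a separate lemma.

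A small bookkeeping step is to handle the edge case $l=0$ separately, where the bound reads $\omega(\Upsilon_n)\geq 2^k-1$, which is $\geq k$ for all $k\geq 1$. Likewise the case $k=l=1$ is trivial since there is at least one vertex. There is no genuine obstacle here; the only thing to be a little careful about is remembering that the standing assumption $\alpha_1\geq 3$ when $k=1$ (so that $\Upsilon_n\not\cong K_1$) is already built into the setting of Section~\ref{parameters}, so no additional case splitting is needed.
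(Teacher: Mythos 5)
Your proof is correct and follows the paper's own route: the corollary is stated there precisely as a consequence of Proposition \ref{prop:clique}, and your estimate $2^{k-l}+l-1\geq k$ simply makes the implicit arithmetic explicit (including the edge cases $l=0$ and $l=k$). The paper additionally records a one-line direct argument---the vertices $\frac{n}{p_1},\ldots,\frac{n}{p_k}$ are pairwise adjacent, so they already form a clique of size $k$---which avoids the formula altogether.
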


Corollary \ref{coro:clique} can also be seen directly as follows. Since $\frac{n}{p_i}\sim \frac{n}{p_j}$ for $1\leq i\neq j\leq k$, the induced subgraph of $\Upsilon_n$ with vertex set $\left\{\frac{n}{p_i}: 1\leq i \leq k\right\}$ is a clique in $\Upsilon_n$.

\subsection{Chromatic number $\chi(\Upsilon_n)$}\label{chromatic}

In the following proposition, we prove that the chromatic number and the clique number of $\Upsilon_n$ are equal.

\begin{proposition}\label{prop:chro}
Let $n=p_1^{\alpha_1}p_2^{\alpha_2}\cdots p_k^{\alpha_k}$ be the prime power factorization of $n$ such that the integers $\alpha_1,\ldots,\alpha_l$ are odd and $\alpha_{l+1},\ldots, \alpha_k$ are even for some $l\in\{0,1,2,\ldots,k\}$. Then
$$\chi(\Upsilon_n)=\omega(\Upsilon_n)=\left\lceil \frac{\alpha_1}{2}\right\rceil \left\lceil \frac{\alpha_2}{2}\right\rceil\cdots \left\lceil \frac{\alpha_l}{2}\right\rceil \left(\frac{\alpha_{l+1}}{2}+1\right) \cdots \left(\frac{\alpha_k}{2}+1\right) +l-1.$$
\end{proposition}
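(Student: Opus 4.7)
Because $\chi(G)\ge\omega(G)$ for every graph, Proposition~\ref{prop:clique} already delivers the lower bound $\chi(\Upsilon_n)\ge\omega(\Upsilon_n)$. The entire task is therefore to exhibit a proper vertex coloring of $\Upsilon_n$ that uses exactly $|V(\mathcal K)|$ colors, where $\mathcal K$ is the clique on $A\cup B$ exhibited in Section~\ref{subsec:clique}. My plan is to take the color palette to be $V(\mathcal K)$ itself, and to build a map $c\colon V(\Upsilon_n)\to V(\mathcal K)$ that restricts to the identity on $V(\mathcal K)$ (so each color is genuinely used) and satisfies $c(u)\ne c(v)$ whenever $u\sim v$ in $\Upsilon_n$.

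The rule for $c$ is essentially the injection $\phi$ from the proof of Proposition~\ref{prop:clique}, but now defined on every vertex. Given $u=p_1^{s_1}\cdots p_k^{s_k}$, if $s_i\ge\lceil\alpha_i/2\rceil$ for every $i\in[k]$, then $u\in A$ and I set $c(u):=u$. Otherwise, let $j$ be the smallest index with $s_j<\lceil\alpha_j/2\rceil$, and set
\[
c(u):=\begin{cases}
n/p_j^{\lceil\alpha_j/2\rceil}\in B & \text{if } j\le l,\\[2pt]
p_1^{\alpha_1}\cdots p_{j-1}^{\alpha_{j-1}}p_j^{\alpha_j/2}p_{j+1}^{\alpha_{j+1}}\cdots p_k^{\alpha_k}\in A & \text{if } j\ge l+1.
\end{cases}
\]
A quick check shows that $c$ fixes each $w\in A\cup B$: vertices in $A$ fall into the first clause, while for $w=n/p_j^{\lceil\alpha_j/2\rceil}\in B$ (with $j\le l$ and $\alpha_j$ odd) the exponent of $p_j$ in $w$ is $(\alpha_j-1)/2=\lceil\alpha_j/2\rceil-1$, so $j$ is the smallest bad index and $c(w)=w$.

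Propriety will be checked by a short case analysis on adjacent $u=\prod p_i^{s_i}$ and $v=\prod p_i^{t_i}$, which satisfy $s_i+t_i\ge\alpha_i$ for all $i$. If $u,v\in A$, then $c(u)=u\ne v=c(v)$ since $u\ne v$. If $u\in A$ and $v$ is colored via its smallest bad index $j$, then assuming $c(u)=c(v)$ and reading off the $p_j$-exponent produces a contradiction: in the $j\le l$ case the exponent of $p_j$ in $u$ would be $\lceil\alpha_j/2\rceil-1$, contradicting $u\in A$; in the $j\ge l+1$ case $u$ would have $s_j=\alpha_j/2$, forcing $t_j\ge\alpha_j/2=\lceil\alpha_j/2\rceil$ from $s_j+t_j\ge\alpha_j$, contradicting $t_j<\lceil\alpha_j/2\rceil$. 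The remaining (main) case is when $u,v$ are both colored via the same smallest bad index $j$, which is the only way $c(u)=c(v)$ can occur: then $s_j,t_j\le\lceil\alpha_j/2\rceil-1$, so for odd $\alpha_j$ one gets $s_j+t_j\le\alpha_j-1<\alpha_j$ and for even $\alpha_j$ one gets $s_j+t_j\le\alpha_j-2<\alpha_j$, both contradicting $u\sim v$.

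The main obstacle is not a deep idea but rather the parity bookkeeping in the last case: one must use that for odd $\alpha_j$ the threshold $\lceil\alpha_j/2\rceil$ is strictly larger than $\alpha_j/2$, so that $2\lceil\alpha_j/2\rceil-2\le\alpha_j-1$ still falls short of $\alpha_j$. Once this is pinned down, everything else is routine, and the construction mirrors (and sharpens) the injection already used to compute $\omega(\Upsilon_n)$.
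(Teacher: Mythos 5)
Your proof is correct and takes essentially the same approach as the paper: the paper also extends the injection from the clique-number argument by assigning to each vertex $u$ outside $\mathcal{K}$ the colour of the clique vertex determined by the smallest index $j$ with $p_j^{\lceil \alpha_j/2\rceil}\nmid u$, and verifies propriety by the same exponent count showing that two vertices sharing the colour indexed by $j$ satisfy $p_j^{\alpha_j}\nmid uv$.
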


\begin{proof}
Consider the clique $\mathcal{K}$ in $\Upsilon_n$ (defined in Section \ref{subsec:clique} with $V(\mathcal{K})=A\cup B$). We have $\omega(\Upsilon_n)=|V(\mathcal{K})|$ by (\ref{eq:clique1}) and Proposition \ref{prop:clique}. Assign $\omega(\Upsilon_n)$ distinct colors to the vertices of $\mathcal{K}$. Out of the $\omega(\Upsilon_n)$ colors used so far, we shall choose $k$ of them (possible as $\omega(\Upsilon_n)\geq k$ by Corollary \ref{coro:clique}) and assign these $k$ colors suitably to the remaining vertices of $\Upsilon_n$.

For $1\leq i\leq k$, set $\gamma_i:= \lceil \frac{\alpha_i}{2} \rceil$ and let $c_i$ be the color assigned to the vertex $w_i\in V(\mathcal{K})$, where
$$w_i=\begin{cases}
p_1^{\alpha_1}\cdots p_{i-1}^{\alpha_{i-1}}p_i^{\gamma_i-1}p_{i+1}^{\alpha_{i+1}}\cdots p_l^{\alpha_l} \cdots p_k^{\alpha_k} & \text{ if } 1\leq i\leq l;\\
p_1^{\alpha_1}\cdots p_l^{\alpha_l}\cdots p_{i-1}^{\alpha_{i-1}}p_i^{\gamma_i}p_{i+1}^{\alpha_{i+1}} \cdots p_k^{\alpha_k} & \text{ if } l+1\leq i\leq k.
\end{cases}$$
Let $u$ be a vertex of $\Upsilon_n$ outside $\mathcal{K}$. Then $p_i^{\gamma_i}$ does not divide $u$ for some $i\in [k]$. We assign the color $c_t$ to the vertex $u$, where $t\in[k]$ is the smallest integer such that $p_t^{\gamma_t}\nmid u$. Thus $u$ and $w_t$ receive the same color. In this way, we color all the vertices of $\Upsilon_n$. Note that if $x, y$ are two distinct vertices of $\Upsilon_n$ with the same color $c_t$, then $p_t^{\alpha_t}\nmid xy$ implies $x\nsim y$.
\end{proof}

Since $\chi(\Upsilon_n)=\omega(\Upsilon_n)$, it is natural to ask whether $\Upsilon_n$ is perfect. In \cite{Smith}, the {\it zero-divisor type graph} $\Gamma^T(\mathbb{Z}_n)$ of $\mathbb{Z}_n$ is defined and it is proved in Theorem 4.1 that $\Gamma^T(\mathbb{Z}_n)$ is perfect if and only if the zero divisor graph $\Gamma(\mathbb{Z}_n)$ is perfect. Further, using the Strong Perfect Graph Theorem, the author proved that the graph $\Gamma^T(\mathbb{Z}_n)$ is perfect if and only if $n\in\{p_1^{\alpha_1}, p_1^{\alpha_1}p_2^{\alpha_2}, p_1^{\alpha_1}p_2 p_3, p_1p_2p_3p_4\}$. From the construct of $\Gamma^T(\mathbb{Z}_n)$, it can be seen that the proper divisor graph $\Upsilon_n$ is isomorphic to $\Gamma^T(\mathbb{Z}_n)$. As a consequence, it follows that $\Upsilon_n$ is prefect if and only if $n\in\{p_1^{\alpha_1}, p_1^{\alpha_1}p_2^{\alpha_2}, p_1^{\alpha_1}p_2 p_3, p_1p_2p_3p_4\}$.

\subsection{Matching number $\alpha'(\Upsilon_n)$}

\begin{proposition}\label{prop:match}
Let $n=p_1^{\alpha_1}p_2^{\alpha_2}\cdots p_k^{\alpha_k}$ and $\mathcal{M}$ be the collection of all edges of $\Upsilon_n$ of the form $\left\{x,\frac{n}{x}\right\}$ with $x <\sqrt{n}$. Then $\mathcal{M}$ is a matching in $\Upsilon_n$ of maximum size and
$$\alpha'(\Upsilon_n)=|\mathcal{M}|=\lfloor \pi(n)/ 2\rfloor.$$
\end{proposition}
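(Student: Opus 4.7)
The plan is to establish matching equality of upper and lower bounds, both of which are essentially immediate.

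\textbf{Upper bound.} Since $|V(\Upsilon_n)|=\pi(n)$, any matching in $\Upsilon_n$ has cardinality at most $\lfloor\pi(n)/2\rfloor$, simply because distinct edges in a matching have disjoint endpoints. This gives $\alpha'(\Upsilon_n)\leq \lfloor\pi(n)/2\rfloor$ for free.

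\textbf{Lower bound via $\mathcal{M}$.} First I would check that each pair $\{x,n/x\}$ appearing in $\mathcal{M}$ is a genuine edge of $\Upsilon_n$: if $x$ is a proper divisor of $n$ with $x<\sqrt{n}$, then $n/x$ is also a proper divisor (with $\sqrt{n}<n/x<n$), the two are distinct, and $x\cdot(n/x)=n$ is divisible by $n$. Next, to see that the edges of $\mathcal{M}$ are pairwise non-adjacent, take two distinct edges $\{x_1,n/x_1\}$ and $\{x_2,n/x_2\}$ with $x_1<x_2<\sqrt{n}$; then $n/x_1>n/x_2>\sqrt{n}>x_2>x_1$, so the four endpoints are pairwise distinct. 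Hence $\mathcal{M}$ is a matching.

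\textbf{Counting $|\mathcal{M}|$.} Consider the involution $\sigma(x):=n/x$ on the set of proper divisors of $n$. Its fixed points are the proper divisors satisfying $x^2=n$, and such a divisor exists (and is unique) iff $n$ is a perfect square, in which case $\sqrt{n}$ is automatically a proper divisor because $n$ is composite. Partition the $\pi(n)$ proper divisors into $\sigma$-orbits: each $2$-orbit $\{x,n/x\}$ with $x<\sqrt{n}$ contributes exactly one edge of $\mathcal{M}$, while the single fixed point (if any) contributes none. Therefore
\[
|\mathcal{M}|=\begin{cases}\pi(n)/2 & \text{if $n$ is not a perfect square},\\ (\pi(n)-1)/2 & \text{if $n$ is a perfect square},\end{cases}
\]
which equals $\lfloor\pi(n)/2\rfloor$ in either case. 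Combining with the upper bound yields $\alpha'(\Upsilon_n)=|\mathcal{M}|=\lfloor\pi(n)/2\rfloor$, and $\mathcal{M}$ is a maximum matching. There is no genuine obstacle here; the only point requiring care is the parity split according to whether $n$ is a perfect square, which the floor function neatly absorbs.
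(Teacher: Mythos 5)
Your proof is correct and follows essentially the same route as the paper: bound $\alpha'(\Upsilon_n)$ above by $\lfloor\pi(n)/2\rfloor$ from the vertex count, verify that the pairs $\left\{x,\frac{n}{x}\right\}$ form a matching, and count them by the pairing $x\mapsto \frac{n}{x}$, with $\sqrt{n}$ the only possibly uncovered vertex. Your phrasing via the involution and its orbits is just a slightly more formal version of the paper's observation that every vertex except possibly $\sqrt{n}$ is an endpoint of an edge of $\mathcal{M}$.
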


\begin{proof}
Since $|V(\Upsilon_n)|=\pi(n)$, we have $\alpha'(\Upsilon_n)\leq \lfloor\frac{\pi(n)}{2}\rfloor$. Clearly, two distinct edges contained in $\mathcal{M}$ do not share any common vertex. So $\mathcal{M}$ is a matching in $\Upsilon_n$. Every vertex of $\Upsilon_n$ is an end point of some edge contained in $\mathcal{M}$, with the exception of the vertex $\sqrt{n}$ when $n$ is a perfect square (in which case $|V(\Upsilon_n)|$ is odd). This gives $|\mathcal{M}|=\lfloor\frac{\pi(n)}{2}\rfloor$ and it follows that the matching $\mathcal{M}$ is of maximum size.
\end{proof}

\begin{corollary}
$\Upsilon_n$ has a perfect matching if and only if $n$ is not a perfect square.
\end{corollary}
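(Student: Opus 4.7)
The plan is to reduce the corollary to a parity statement about $\pi(n) = |V(\Upsilon_n)|$ and then invoke Proposition \ref{prop:match} directly. The underlying observation is that, by the formula (\ref{pi-n}), $\pi(n)$ has the same parity as $(\alpha_1+1)(\alpha_2+1)\cdots(\alpha_k+1)$, and this product is odd precisely when every $\alpha_i$ is even, that is, precisely when $n$ is a perfect square. So I would first record the equivalence
$$\pi(n) \text{ is even} \iff n \text{ is not a perfect square}.$$

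For the ``only if'' direction I would argue by contrapositive: if $n$ is a perfect square, then $|V(\Upsilon_n)| = \pi(n)$ is odd, and a graph on an odd number of vertices cannot have a perfect matching. For the ``if'' direction, suppose $n$ is not a perfect square. Then $\sqrt{n}$ is not an integer, so for every proper divisor $x$ of $n$ the complementary divisor $n/x$ is distinct from $x$, and the map $x \mapsto n/x$ is a fixed-point-free involution on $V(\Upsilon_n)$. The matching $\mathcal{M} = \{\{x, n/x\} : x < \sqrt{n}\}$ supplied by Proposition \ref{prop:match} therefore covers every vertex: a proper divisor $x$ with $x < \sqrt{n}$ is matched via $\{x,n/x\}\in\mathcal{M}$, and a proper divisor $x$ with $x > \sqrt{n}$ is matched via $\{n/x,x\}\in\mathcal{M}$. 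Hence $\mathcal{M}$ is a perfect matching, and in fact $\alpha'(\Upsilon_n) = \pi(n)/2 = |V(\Upsilon_n)|/2$ by the same proposition.

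I do not anticipate any real obstacle. The proposition has already done the heavy lifting by constructing the canonical matching $\mathcal{M}$ and computing $\alpha'(\Upsilon_n) = \lfloor \pi(n)/2 \rfloor$; all that remains is the one-line parity check that $\pi(n)$ is even exactly when some exponent $\alpha_i$ is odd.
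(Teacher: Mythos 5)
Your proof is correct and follows essentially the same route as the paper: the corollary is read off from Proposition \ref{prop:match}, whose matching $\mathcal{M}$ covers every vertex exactly when $n$ is not a perfect square, while an odd $\pi(n)$ rules out a perfect matching in the square case. Your explicit parity check of $\pi(n)$ via the formula (\ref{pi-n}) is just a minor variant of the paper's observation that $\sqrt{n}$ is the unique vertex left uncovered when $n$ is a perfect square.
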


\begin{corollary}\label{coro-2:match}
Let $Z=\{x\in V(\Upsilon_n): x <\sqrt{n}\}$. Then $|Z|= \lfloor\frac{\pi(n)}{2}\rfloor$.
\end{corollary}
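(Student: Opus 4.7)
The plan is to read this off directly from the definition of the matching $\mathcal{M}$ in Proposition \ref{prop:match}. By construction, the edges of $\mathcal{M}$ are in bijective correspondence with the set $Z$: each $x \in Z$ gives rise to the unique edge $\{x, n/x\} \in \mathcal{M}$, and conversely the smaller endpoint of any edge in $\mathcal{M}$ lies in $Z$. Hence $|Z| = |\mathcal{M}|$, and Proposition \ref{prop:match} already tells us $|\mathcal{M}| = \lfloor \pi(n)/2 \rfloor$.

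If one prefers an argument independent of the matching, the key step is to consider the involution $\sigma : V(\Upsilon_n) \to V(\Upsilon_n)$ defined by $\sigma(x) = n/x$. This is well-defined because $x$ is a proper divisor of $n$ exactly when $n/x$ is, and it partitions $V(\Upsilon_n)$ into orbits of size one or two. An orbit has size one precisely when $x = n/x$, i.e.\ when $n$ is a perfect square and $x = \sqrt{n}$. I would then split into two cases: if $n$ is not a perfect square, then $\pi(n)$ is even and the $\pi(n)/2$ two-element orbits each contribute exactly one element (the one less than $\sqrt{n}$) to $Z$; if $n$ is a perfect square, then $\sqrt{n} \notin Z$ and the remaining $\pi(n) - 1$ proper divisors split into $(\pi(n)-1)/2$ pairs, again contributing one element each to $Z$. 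In both cases $|Z| = \lfloor \pi(n)/2 \rfloor$.

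There is no real obstacle here; the statement is essentially a restatement of the matching count via the pairing $x \leftrightarrow n/x$. The only mild subtlety is remembering that $\sqrt{n}$ is itself a proper divisor when $n$ is a (non-trivial) perfect square, so that the parity of $\pi(n)$ correctly matches the floor function on the right-hand side.
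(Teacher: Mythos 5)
Your first argument is exactly the paper's proof: the paper also establishes $|Z|=\lfloor \pi(n)/2\rfloor$ by noting that $Z$ is in bijective correspondence with the matching $\mathcal{M}=\left\{\{x,\tfrac{n}{x}\}: x\in V(\Upsilon_n),\ x<\sqrt{n}\right\}$ and invoking Proposition \ref{prop:match}. Your alternative involution argument is just a self-contained rephrasing of the same pairing $x\leftrightarrow n/x$, so there is nothing to add; the proposal is correct.
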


\begin{proof}
This follows from Proposition \ref{prop:match} using the fact that $Z$ is in bijective correspondence with the set $\mathcal{M}:=\left\{\left\{x,\frac{n}{x}\right\}: x\in V(\Upsilon_n),\;x <\sqrt{n}\right\}$.
\end{proof}

\begin{corollary}
The edge covering number $\beta'(\Upsilon_n)$ of $\Upsilon_n$ is given by: $\beta'(\Upsilon_n)=\lceil \pi(n)/2\rceil$.
\end{corollary}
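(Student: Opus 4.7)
The plan is to invoke the classical Gallai identity: for any graph $G$ without isolated vertices, the matching number and the edge covering number satisfy $\alpha'(G)+\beta'(G)=|V(G)|$. I would first observe that this hypothesis applies to $\Upsilon_n$, since $n$ is composite, so $\Upsilon_n$ is connected with $|V(\Upsilon_n)|=\pi(n)\geq 2$, and consequently every vertex has degree at least one (as already noted in Section~\ref{basic}).

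With Gallai's identity in hand, the proof reduces to a one-line calculation: applying Proposition~\ref{prop:match}, which gives $\alpha'(\Upsilon_n)=\lfloor \pi(n)/2\rfloor$, one obtains
\[
\beta'(\Upsilon_n)=\pi(n)-\lfloor \pi(n)/2\rfloor=\lceil \pi(n)/2\rceil.
\]
Thus nothing beyond quoting the earlier proposition and the standard identity is required.

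There is no real obstacle here; the only judgment call is whether to cite Gallai's theorem from the reference \cite{west} or to sketch a direct argument. A direct argument would exhibit an edge cover of size $\lceil \pi(n)/2 \rceil$ by starting from the maximum matching $\mathcal{M}$ of Proposition~\ref{prop:match}, which already covers every vertex except possibly $\sqrt{n}$ when $n$ is a perfect square; in that exceptional case one adds a single edge incident to $\sqrt{n}$ (which exists by connectedness), yielding an edge cover of size $\lfloor \pi(n)/2\rfloor+1=\lceil \pi(n)/2\rceil$. The matching lower bound $\beta'(\Upsilon_n)\geq \alpha'(\Upsilon_n)=\lfloor \pi(n)/2\rfloor$ (true when $\pi(n)$ is even) together with the obvious bound $\beta'(\Upsilon_n)\geq \lceil \pi(n)/2\rceil$ coming from the fact that any edge covers at most two vertices completes the proof.
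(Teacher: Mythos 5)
Your proposal is correct and follows essentially the same route as the paper: both invoke the Gallai identity $\alpha'(\Upsilon_n)+\beta'(\Upsilon_n)=\pi(n)$ for graphs without isolated vertices (cited from \cite{west}) and then substitute $\alpha'(\Upsilon_n)=\lfloor \pi(n)/2\rfloor$ from Proposition~\ref{prop:match}. The auxiliary direct construction you sketch is a fine alternative but is not needed; the main two-line argument already matches the paper's proof.
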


\begin{proof}
Since $\Upsilon_n$ has no isolated vertices, we have $\alpha'(\Upsilon_n)+\beta'(\Upsilon_n)=|V(\Upsilon_n)|=\pi(n)$ by \cite[Theorem 3.1.22]{west}. Then Proposition \ref{prop:match} gives that $\beta'(\Upsilon_n)=\pi(n)-\lfloor \pi(n)/2\rfloor=\lceil \pi(n)/2\rceil$.
\end{proof}

\subsection{Independence number $\alpha(\Upsilon_n)$}

If $n=p_1p_2$, then $\Upsilon_n\cong K_2$ and so $\alpha(\Upsilon_n)=1$. If $n\neq p_1p_2$, then no two vertices among $p_1,p_2,\ldots,p_k$ are adjacent and hence $\alpha(\Upsilon_n)\geq k$. We prove the following:

\begin{proposition}\label{prop:indepent}
Let $n=p_1^{\alpha_1}p_2^{\alpha_2}\cdots p_k^{\alpha_k}$ and $\mathcal{I}:=\{x\in V(\Upsilon_n): x\leq \sqrt{n}\}$. Then $\mathcal{I}$ is an independent set in $\Upsilon_n$ of maximum size and
$$\alpha(\Upsilon_n)=|\mathcal{I}|=\lceil \pi(n)/2\rceil.$$
\end{proposition}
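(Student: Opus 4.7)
The plan is to verify three things in sequence: that $\mathcal{I}$ is indeed an independent set, that its cardinality equals $\lceil \pi(n)/2\rceil$, and that no larger independent set can exist. The first and third items are short once the right observation is made, so the bulk of the write-up is just bookkeeping.

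For independence, I would take distinct $x,y \in \mathcal{I}$. Since $x \leq \sqrt{n}$ and $y \leq \sqrt{n}$, the product satisfies $xy \leq n$, with equality possible only when $x=y=\sqrt{n}$. Since $x \neq y$, we get $xy < n$, and hence $n \nmid xy$; therefore $x$ and $y$ are not adjacent in $\Upsilon_n$. So $\mathcal{I}$ is independent.

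To compute $|\mathcal{I}|$, I would split the set as $\mathcal{I} = Z \cup \{\sqrt{n}\}$ when $n$ is a perfect square and $\mathcal{I}=Z$ otherwise, where $Z=\{x\in V(\Upsilon_n):x<\sqrt{n}\}$ as in Corollary \ref{coro-2:match}. If $n$ is a perfect square, then $n$ has an odd number of divisors, so $\pi(n)$ is odd; using Corollary \ref{coro-2:match} we get $|\mathcal{I}| = \lfloor\pi(n)/2\rfloor + 1 = \lceil \pi(n)/2\rceil$. If $n$ is not a perfect square, then $\pi(n)$ is even and $|\mathcal{I}|=\lfloor\pi(n)/2\rfloor=\lceil\pi(n)/2\rceil$. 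Either way, $|\mathcal{I}|=\lceil\pi(n)/2\rceil$.

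For the upper bound, I would invoke the standard inequality $\alpha(G)\leq |V(G)|-\alpha'(G)$, which holds in every graph: any vertex cover must contain at least one endpoint of each edge of a maximum matching, so $\beta(G)\geq \alpha'(G)$, and combined with Gallai's identity $\alpha(G)+\beta(G)=|V(G)|$ the inequality follows. Applying this to $\Upsilon_n$ together with Proposition \ref{prop:match} gives
$$\alpha(\Upsilon_n)\leq \pi(n)-\alpha'(\Upsilon_n)=\pi(n)-\lfloor \pi(n)/2\rfloor=\lceil \pi(n)/2\rceil=|\mathcal{I}|.$$
Since $\mathcal{I}$ is independent of size $\lceil\pi(n)/2\rceil$, equality holds and $\mathcal{I}$ is a maximum independent set. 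There is no real obstacle here; the only subtle point is remembering to separate the perfect-square case when reading off $|\mathcal{I}|$ from Corollary \ref{coro-2:match}.
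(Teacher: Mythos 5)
Your proof is correct, but the maximality step takes a different route from the paper. The paper argues directly: given any independent set $\mathcal{J}$, the map $y\mapsto \frac{n}{y}$ sends $\mathcal{J}\setminus\mathcal{I}$ injectively into $\mathcal{I}\setminus\mathcal{J}$ (using that $y>\sqrt{n}$ forces $\frac{n}{y}<\sqrt{n}$ and $y\sim\frac{n}{y}$), so $|\mathcal{J}|\leq|\mathcal{I}|$ with no appeal to other graph parameters. You instead combine Gallai's identity $\alpha(G)+\beta(G)=|V(G)|$ with the general bound $\beta(G)\geq\alpha'(G)$ to get $\alpha(\Upsilon_n)\leq \pi(n)-\alpha'(\Upsilon_n)$, and then plug in the matching number from Proposition \ref{prop:match}. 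This is logically sound and not circular: Proposition \ref{prop:match} is established before this result and independently of it, and Gallai's identity and $\beta\geq\alpha'$ are standard facts valid in every graph (the paper itself invokes the Gallai identities from \cite{west} only in the subsequent corollaries, which your argument does not rely on). Your route is shorter and makes transparent why $\alpha$ and $\alpha'$ are complementary here, essentially because the matching $\left\{\left\{x,\frac{n}{x}\right\}: x<\sqrt{n}\right\}$ saturates all vertices except possibly $\sqrt{n}$; the paper's injection is really the same pairing in disguise, but its argument is self-contained and exhibits explicitly how an arbitrary maximum independent set compares with $\mathcal{I}$. Your independence check and the computation of $|\mathcal{I}|$ via Corollary \ref{coro-2:match}, split according to whether $n$ is a perfect square, match the paper's.
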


\begin{proof}
If $x,y\in\mathcal{I}$ with $x\neq y$, then at least one of them is less than $\sqrt{n}$ and so  $xy<n$. This implies that $x$ and $y$ are not adjacent. Thus $\mathcal{I}$ is an independent set in $\Upsilon_n$.

Let $\mathcal{J}$ be an independent set in $\Upsilon_n$ of maximum size. Then $|\mathcal{I}|\leq |\mathcal{J}|$. We claim that $|\mathcal{I}|\geq |\mathcal{J}|$. This follows if $\mathcal{J}\setminus \mathcal{I}=\Phi$. Assume that $\mathcal{J}\setminus \mathcal{I}\neq\Phi$.  Let $y\in \mathcal{J}\setminus \mathcal{I}$. Then $y>\sqrt{n}$. This implies $\frac{n}{y}< \sqrt{n}$ and so $\frac{n}{y}\in \mathcal{I}$. Since $y\sim \frac{n}{y}$ (as $n\neq y^2$) and $\mathcal{J}$ is an independent set, it follows that $\frac{n}{y}\notin \mathcal{J}$ and hence $\frac{n}{y}\in \mathcal{I}\setminus\mathcal{J}$. Thus $y\mapsto \frac{n}{y}$ defines an injective map from $\mathcal{J}\setminus \mathcal{I}$ to $\mathcal{I}\setminus \mathcal{J}$. Then $|\mathcal{J}|=|\mathcal{I}\cap\mathcal{J}|+|\mathcal{J}\setminus \mathcal{I}|\leq |\mathcal{I}\cap\mathcal{J}|+|\mathcal{I}\setminus \mathcal{J}|=|\mathcal{I}|$. Thus $|\mathcal{I}|= |\mathcal{J}|$ and hence the independent set $\mathcal{I}$ is of maximum size.

We have $\sqrt{n}\in\mathcal{I}$ if and only if $n$ is a perfect square if and only if $\pi(n)=|V(\Upsilon_n)|$ is odd. Consider the set $Z$ defined in Corollary \ref{coro-2:match}. When $n$ is a perfect square, we have
$Z=\mathcal{I}\setminus\{\sqrt{n}\}$ and this gives $|\mathcal{I}|=|Z|+1=\lfloor\frac{\pi(n)}{2}\rfloor+1=\lceil\frac{\pi(n)}{2}\rceil$, otherwise $Z=\mathcal{I}$ and we get $|\mathcal{I}|=|Z|=\lfloor\frac{\pi(n)}{2}\rfloor=\lceil\frac{\pi(n)}{2}\rceil$.
\end{proof}

\begin{corollary}
The vertex covering number $\beta(\Upsilon_n)$ of $\Upsilon_n$ is given by: $\beta(\Upsilon_n)=\lfloor \pi(n)/2\rfloor$.
\end{corollary}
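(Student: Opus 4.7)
The plan is to deduce the formula for $\beta(\Upsilon_n)$ directly from the preceding Proposition \ref{prop:indepent} using the standard Gallai identity $\alpha(G) + \beta(G) = |V(G)|$, which holds for every graph $G$ (see \cite[Theorem 3.1.22]{west}, cited just above in the edge covering corollary). This identity is the natural tool here, exactly as the paper used $\alpha'(G) + \beta'(G) = |V(G)|$ to derive $\beta'(\Upsilon_n)$ from $\alpha'(\Upsilon_n)$ a few lines earlier.

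Concretely, I would first recall that $|V(\Upsilon_n)| = \pi(n)$ by definition of the proper divisor graph. Then by Proposition \ref{prop:indepent}, $\alpha(\Upsilon_n) = \lceil \pi(n)/2 \rceil$. Applying the Gallai identity yields
\[
\beta(\Upsilon_n) = |V(\Upsilon_n)| - \alpha(\Upsilon_n) = \pi(n) - \left\lceil \frac{\pi(n)}{2} \right\rceil = \left\lfloor \frac{\pi(n)}{2} \right\rfloor,
\]
using the elementary identity $m - \lceil m/2 \rceil = \lfloor m/2 \rfloor$ for every nonnegative integer $m$.

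There is essentially no obstacle: the corollary is a one-line consequence of the independence number computation and a standard identity from \cite{west}. The only minor point is that the Gallai identity $\alpha(G) + \beta(G) = |V(G)|$ is valid for all graphs (including those with isolated vertices, unlike the edge version $\alpha'(G) + \beta'(G) = |V(G)|$ which requires no isolated vertices); but in our setting $\Upsilon_n$ is connected anyway when $n$ is composite, so either version of the hypothesis is satisfied.
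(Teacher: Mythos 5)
Your proof is correct and matches the paper's argument exactly: both use the Gallai identity $\alpha(\Upsilon_n)+\beta(\Upsilon_n)=|V(\Upsilon_n)|=\pi(n)$ together with Proposition~\ref{prop:indepent} to conclude $\beta(\Upsilon_n)=\pi(n)-\lceil \pi(n)/2\rceil=\lfloor \pi(n)/2\rfloor$. The only trivial difference is the citation: the paper invokes \cite[Lemma 3.1.21]{west} for the vertex-cover/independence identity, whereas you pointed to the edge-version result \cite[Theorem 3.1.22]{west}; the mathematics is unaffected.
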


\begin{proof}
We have $\alpha(\Upsilon_n)+\beta(\Upsilon_n)=|V(\Upsilon_n)|=\pi(n)$ by \cite[Lemma 3.1.21]{west}. Then Proposition \ref{prop:indepent} gives that $\beta(\Upsilon_n)=\pi(n)-\lceil \pi(n)/2\rceil=\lfloor \pi(n)/2\rfloor$.
\end{proof}

\subsection{Domination number $\gamma(\Upsilon_n)$}

\begin{proposition}
Let $n=p_1^{\alpha_1}p_2^{\alpha_2}\cdots p_k^{\alpha_k}$. If $n\neq p_1p_2$, then $Y:=\left\{\frac{n}{p_1},\frac{n}{p_2}\ldots, \frac{n}{p_k}\right\}$ is a dominating set of minimum size and so
$$\gamma(\Upsilon_n)=\begin{cases}
1 & \text{ if } n=p_1p_2;\\
k & \text{ otherwise. }
\end{cases}$$
\end{proposition}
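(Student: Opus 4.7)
The plan is to split into three cases according to the structure of $n$, handling the trivial ones first and then establishing matching upper and lower bounds for $\gamma(\Upsilon_n)$ in the main case.

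If $n = p_1 p_2$, then $\Upsilon_n \cong K_2$ and $\gamma(\Upsilon_n) = 1$. If $k = 1$ (so $n = p_1^{\alpha_1}$ with $\alpha_1 \geq 3$ by the standing assumption), then $\frac{n}{p_1} = p_1^{\alpha_1 - 1}$ is a universal vertex of $\Upsilon_n$, so $\{\frac{n}{p_1}\}$ is already a dominating set and $\gamma(\Upsilon_n) = 1 = k$. Both verifications are immediate.

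For the main case $k \geq 2$ and $n \neq p_1 p_2$, the upper bound $\gamma(\Upsilon_n) \leq k$ comes from showing $Y$ is dominating: any vertex $u \in V(\Upsilon_n) \setminus Y$ is a proper divisor of $n$ distinct from each $\frac{n}{p_i}$, so some $p_i$ divides $u$, whence $n \mid u \cdot \frac{n}{p_i}$ and thus $u \sim \frac{n}{p_i}$. For the lower bound $\gamma(\Upsilon_n) \geq k$, I use the pendant vertices. By Proposition \ref{prop:pendant}(iii), each $p_i$ is a pendant vertex whose unique neighbour is $\frac{n}{p_i}$. Therefore every dominating set $X$ of $\Upsilon_n$ must contain at least one vertex from the pair $P_i := \{p_i, \frac{n}{p_i}\}$ for each $i \in [k]$. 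If I can show the pairs $P_1, \ldots, P_k$ are pairwise disjoint, then $|X| \geq k$ follows immediately.

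The disjointness check is the only delicate step and the place where the hypothesis $n \neq p_1 p_2$ gets used. Clearly $p_i \neq p_j$ and $\frac{n}{p_i} \neq \frac{n}{p_j}$ for $i \neq j$. The only possible coincidence is $p_i = \frac{n}{p_j}$, which forces $n = p_i p_j$; with $k \geq 2$ this requires $i \neq j$, $k = 2$, and $n = p_1 p_2$, contradicting our hypothesis. Therefore the pairs are disjoint, yielding $\gamma(\Upsilon_n) \geq k$ and completing the proof. I expect the main (though very minor) obstacle to be laying out this disjointness argument cleanly so that the role of the exclusion $n \neq p_1 p_2$ is transparent.
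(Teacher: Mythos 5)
Your proof is correct and follows essentially the same route as the paper: show $Y$ is dominating, then use the pendant vertices $p_i$ (whose unique neighbours are $\frac{n}{p_i}$) together with the pairwise disjointness of the pairs $\left\{p_i,\frac{n}{p_i}\right\}$ --- which the paper phrases as these $k$ edges forming a matching --- to force $\gamma(\Upsilon_n)\geq k$. Your explicit disjointness check and the separate treatment of $k=1$ are just slightly more detailed versions of what the paper leaves implicit.
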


\begin{proof}
Every vertex of $\Upsilon_n$ is adjacent or equal to at least one of the vertices in $Y$. This implies that $Y$ is a dominating set in $\Upsilon_n$. Assume that $n\neq p_1p_2$. Then the set consisting of the $k$ distinct edges $\left\{p_i,\frac{n}{p_i}\right\}$, $1 \leq i\leq k$, is a matching in $\Upsilon_n$. The fact that $p_1,p_2,\ldots, p_k$ are pendant vertices of $\Upsilon_n$ then implies that any dominating set in $\Upsilon_n$ must contain $p_i$ or $\frac{n}{p_i}$ for every $i\in [k]$. Thus every dominating set must contain at least $k$ vertices and hence the dominating set $Y$ is of minimum size. The rest is clear.
\end{proof}

\subsection{Chromatic index $\chi'(\Upsilon_n)$}

Clearly, $\chi'(\Upsilon_n)\geq \Delta(\Upsilon_n)$. We shall prove that $\chi'(\Upsilon_n)=\Delta(\Upsilon_n)$. The following result proved in \cite[Remark 1]{akbari} is helpful for us, which was obtained as an application of Vizing's Adjacency Lemma \cite[Corollary 3.6(iii)]{yap}.

\begin{lemma}\cite{akbari}\label{lem:chindex}
Let $G$ be a simple graph and $D:=\{u\in V(G):\deg(u)=\Delta(G)\}$. Suppose that, for every vertex $u\in D$, there is an edge $\{u,w\}$ of $G$ such that $\Delta(G)-\deg(w)+2>|D|$.
Then $\chi'(G)=\Delta(G)$.
\end{lemma}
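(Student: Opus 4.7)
The plan is to argue by contradiction, combining Vizing's theorem with Vizing's Adjacency Lemma (the precise tool \cite[Corollary 3.6(iii)]{yap} the author invokes after the statement). By Vizing's theorem, $\chi'(G)\in\{\Delta(G),\Delta(G)+1\}$, so I would assume toward a contradiction that $G$ is Class~2, i.e.\ $\chi'(G)=\Delta(G)+1$, and produce a specific $u\in D$ violating the hypothesised strict inequality.

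The first real step is to reduce to an edge-chromatic critical subgraph $H$ of $G$, obtained by successively deleting edges whose removal does not decrease the chromatic index. Then $\chi'(H)=\chi'(G)=\Delta(G)+1$, which together with Vizing's bound $\chi'(H)\le\Delta(H)+1$ and $\Delta(H)\le\Delta(G)$ forces $\Delta(H)=\Delta(G)=\Delta$. A vertex $u^{*}$ achieving $\deg_H(u^{*})=\Delta$ then satisfies $\deg_H(u^{*})\le\deg_G(u^{*})\le\Delta$, so both inequalities are equalities; in particular $u^{*}\in D$ and every $G$-edge incident to $u^{*}$ is already present in $H$.

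Now feed $u^{*}$ into the hypothesis: there is an edge $\{u^{*},w\}$ of $G$, and hence of $H$, with $\Delta-\deg_G(w)+2>|D|$. Applying Vizing's Adjacency Lemma to the edge $\{u^{*},w\}$ inside the critical graph $H$ yields at least $\Delta-\deg_H(w)+1$ neighbours of $u^{*}$ in $H$ of $H$-degree $\Delta$. Since $\deg_H(w)\le\deg_G(w)$, this count is at least $\Delta-\deg_G(w)+1$, and the same degree squeeze applied to each such $\Delta$-neighbour shows that it lies in $D$. As these neighbours are distinct from $u^{*}\in D$, they form a subset of $D\setminus\{u^{*}\}$, so
\[
|D|-1 \;\ge\; |N_G(u^{*})\cap D| \;\ge\; \Delta-\deg_G(w)+1,
\]
i.e.\ $\Delta-\deg_G(w)+2\le|D|$, directly contradicting the choice of $w$.

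The main obstacle is the bookkeeping around the reduction to the critical subgraph: one must verify that a maximum-degree vertex of $H$ is automatically in $D$, and that the edge furnished by the hypothesis at that vertex survives into $H$. Both are handled by the elementary degree comparison $\deg_H(u^{*})\le\deg_G(u^{*})\le\Delta$, after which the Adjacency Lemma converts the hypothesised strict inequality into a contradiction in a single line.
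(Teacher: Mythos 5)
Your argument is correct, and it follows exactly the route the paper indicates for this lemma: the paper gives no proof of its own but cites it from \cite{akbari} as an application of Vizing's Adjacency Lemma, which is precisely the tool you use (after the standard reduction to an edge-critical subgraph and the degree squeeze $\deg_H(u^*)\le\deg_G(u^*)\le\Delta$). No gaps.
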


\begin{proposition}\label{prop:chro-inex}
Let $n=p_1^{\alpha_1}p_2^{\alpha_2}\cdots p_k^{\alpha_k}$. Then $\chi'(\Upsilon_n)=\Delta(\Upsilon_n)$.
\end{proposition}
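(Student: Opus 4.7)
The plan is to apply Lemma \ref{lem:chindex} to $G = \Upsilon_n$. Since $\chi'(\Upsilon_n) \geq \Delta(\Upsilon_n)$ is automatic, I only need to exhibit, for each $u \in D := \{v \in V(\Upsilon_n) : \deg(v) = \Delta(\Upsilon_n)\}$, a neighbor $w$ of $u$ with $\Delta(\Upsilon_n) - \deg(w) + 2 > |D|$. The natural candidate is $w = p_i$ when $u = n/p_i$: pendant vertices have degree $1$, so the required inequality collapses to $\Delta(\Upsilon_n) \geq |D|$, which should be easy to verify from the known formulas for $\Delta$ and $|D|$.

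First I would dispose of the degenerate cases. For $n \in \{p_1^3, p_1 p_2\}$, $\Upsilon_n \cong K_2$, so $\chi'(\Upsilon_n) = 1 = \Delta(\Upsilon_n)$. For $n = p_1^2 p_2$, $\Upsilon_n$ is isomorphic to the path $P_4$ and one immediately has $\chi'(\Upsilon_n) = 2 = \Delta(\Upsilon_n)$. For $n = p_1^{\alpha_1}$ with $\alpha_1 \geq 4$, Corollary \ref{cor:maxdegcount} gives $|D| = 1$, so picking any neighbor of the unique maximum-degree vertex $n/p_1$ makes the hypothesis of Lemma \ref{lem:chindex} trivially true.

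For the remaining case, $k \geq 2$ with $n \notin \{p_1 p_2, p_1^2 p_2\}$, Proposition \ref{prop:maxdeg}(iii) gives $D = \{n/p_1, \ldots, n/p_t\}$, where $t \in [k]$ is the largest index with $\alpha_t = \alpha_1$, so $|D| = t$. For each $u = n/p_i$ with $i \in [t]$, take $w = p_i$, which is a neighbor of $u$ with $\deg(w) = 1$. The hypothesis of Lemma \ref{lem:chindex} reduces to $\Delta(\Upsilon_n) \geq t$, which I would verify via Corollary \ref{coro:maxdeg}: if $\alpha_1 \geq 2$, then $\Delta(\Upsilon_n) \geq \alpha_1 \cdot 2^{k-1} - 2 \geq 2^k - 2 \geq k \geq t$; if $\alpha_1 = 1$, then all $\alpha_i = 1$, so $t = k$ and $\Delta(\Upsilon_n) = 2^{k-1} - 1 \geq k$ for $k \geq 3$ (the case $k = 2$ being $n = p_1 p_2$, already excluded). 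Lemma \ref{lem:chindex} then yields $\chi'(\Upsilon_n) = \Delta(\Upsilon_n)$.

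The main obstacle is really just the bookkeeping: one has to isolate exactly which $n$ must be handled by hand (the two $K_2$ instances, the single $P_4$ instance, and the prime-power regime where $|D| = 1$) before the uniform pendant-neighbor argument applies. Once this case split is in place, the core of the proof is the single inequality $\Delta \geq t$, which is easy because $\Delta$ grows essentially like $2^k$ while $t \leq k$.
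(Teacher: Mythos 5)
Your proposal is correct and follows essentially the same route as the paper: both apply Lemma \ref{lem:chindex} to the edges $\{n/p_i,\, p_i\}$ joining each maximum-degree vertex to its pendant prime neighbor, reducing everything to the inequality $\Delta(\Upsilon_n)+1 > |D|$. The only cosmetic differences are that the paper verifies this by noting $\deg(n/p_i)\geq k\geq |D|$ directly from the neighbors $p_i$ and $n/p_j$ ($j\neq i$), which also makes your separate treatment of $n=p_1^2p_2$ and of prime powers unnecessary, whereas you derive it from the explicit degree formula in Corollary \ref{coro:maxdeg}.
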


\begin{proof}
If $n=p_1^3$ or $p_1p_2$, then $\Upsilon_n\cong K_2$ and so $\chi'(\Upsilon_n)=1=\Delta(\Upsilon_n)$. Assume that $n\notin\{p_1^3, p_1p_2\}$. Define $u_i:=\frac{n}{p_i}$ for $1\leq i\leq k$ and $D:=\{v\in V(\Upsilon_n):\deg(v)=\Delta(\Upsilon_n)\}$.  By Proposition \ref{prop:maxdeg}, we have $D\subseteq \{u_1,u_2,\ldots,u_k\}$ and so $1\leq |D|\leq k$.

Let $v\in D$. Then $v= u_i$ for some $i\in [k]$. We have $u_i\sim p_i$ and $u_i\sim u_j$ for $j\in [k]\setminus\{i\}$. Since $n\neq p_1p_2$, the vertex $p_i$ is different from each such $u_j$. Thus $\deg(v)=\deg(u_i)\geq k$. For the edge $\{v,p_i\}=\{u_i,p_i\}$ of $\Upsilon_n$, we have
$\Delta(\Upsilon_n)-\deg(p_i)+2=\deg(v)-1+2 \geq k+1>|D|.$
Hence $\chi'(\Upsilon_n)=\Delta(\Upsilon_n)$ by Lemma \ref{lem:chindex}.
\end{proof}

\subsubsection*{Coloring the edges of $\Upsilon_n$:}

When $n$ is a prime power or a product of distinct primes, we are able to provide algorithms in the following for a proper edge coloring of $\Upsilon_n$ using $\Delta(\Upsilon_n)$ distinct colors. It would be interesting to develop such algorithms for other values of $n$ as well.

\subsubsection*{(i) $n=p_1^{\alpha_1}$:}

We have $\chi'(\Upsilon_n)=\Delta(\Upsilon_n)=\pi\left(\frac{n}{p_1}\right)=\alpha_1-2$ by Proposition \ref{prop:chro-inex} and Corollary \ref{coro:maxdeg}. Consider $\alpha_1-2$ distinct colours, say $c_0,c_1,\ldots,c_{\alpha_1-3}$. For a given edge $\{u,v\}$ of $\Upsilon_n$, there exists a unique $j\in\{0,1,\ldots,\alpha_1-3\}$ such that $\frac{uv}{n}=p_1^j$. Then assign the color $c_j$ to the edge $\{u,v\}$. If two distinct adjacent edges $\{u,v\}$ and $\{v, w\}$ of $\Upsilon_n$ receive the same color $c_t$, then $\frac{uv}{n}=p_1^t=\frac{vw}{n}$ gives $u=w$, a contradiction. Thus, two distinct adjacent edges receive different colors, implying a proper edge coloring of $\Upsilon_n$.

\subsubsection*{(ii) $n=p_1p_2\cdots p_k$, $k\geq 2$:}

Here $\chi'(\Upsilon_n)=\Delta(\Upsilon_n)=\pi\left(\frac{n}{p_1}\right)+1=2^{k-1}-1$ by Proposition \ref{prop:chro-inex} and Corollary \ref{coro:maxdeg}. Consider the colors $c_j$ for $j\in I$, where $I$ is the index set defined by
$$I:=\{j : j|n \text{ and } 1<j< \sqrt{n}\}.$$
We thus have a total of $|I|=\pi(n)/2=2^{k-1}-1$ colors (see Corollary \ref{coro-2:match}). We assign colors to the edges of $\Upsilon_n$ in the following way. Let $\{u,v\}$ be an edge of $\Upsilon_n$. Set $l=\frac{uv}{n}$. Then $1\leq l < n$ and $l|n$. We have $l\neq\sqrt{n}$ as $n$ is not a perfect square. If $l\neq 1$, then $l\in I$ if $l<\sqrt{n}$, and $\frac{n}{l}\in I$ if $l> \sqrt{n}$.
\begin{itemize}
\item If $1< l < \sqrt{n}$, then we call $\{u,v\}$ an edge of Type-I and  assign the color $c_l$ to it.
\item If $\sqrt{n}<l<n$, then we call $\{u,v\}$ an edge of Type-II and assign the color $c_{\frac{n}{l}}$ to it.
\item If $l=1$, then we call $\{u,v\}$ an edge of Type-III.
\end{itemize}
We first prove that there is no conflict of interest in the above assignment of colors to Type-I and Type II edges, and then we shall color Type-III edges suitably.

Let $\{u,v\}$ and $\{v,w\}$ be distinct adjacent edges of $\Upsilon_n$ of Type-I/Type-II. Suppose that they have received the same color.
If both $\{u,v\}$ and $\{v,w\}$ are of the same type, then it follows that $\frac{uv}{n}=\frac{vw}{n}$ which gives $u=w$, a contradiction. So assume that they are of different types. Without loss of generality, we may assume that $\{u,v\}$ is of Type-I and $\{v,w\}$ is of Type-II. Then it follows that
$\frac{uv}{n}=\frac{n^2}{vw}$ and this gives $uv^2w=n^3$. Since $n$ is a product of distinct primes and $v\neq n$, we have that $p_j$ does not divide $v$ for some $j\in [k]$. Then $uv^2w$ is not divisible by $p_j^3$. This implies that there are no such $u,v, w$ with $uv^2w=n^3$. Thus, two distinct adjacent edges of Type-I/Type-II have received different colors.

Now consider an edge $\{x,y\}$ of Type-III. Then $xy=n$. Note that any other edge that is adjacent with $\{x,y\}$ must be of Type-I or Type-II. Let $a=\deg(x)-1$ and $b=\deg(y)-1$. If we prove that $a+b < 2^{k-1}-1$, then we can color the edge $\{x,y\}$ with any of the $2^{k-1}-1-(a+b)$ colors which are not used for the Type-I/Type-II edges adjacent with $\{x,y\}$.

If one of $x$ or $y$, say $x$, is a pendant vertex, then $\deg(y)=\Delta(\Upsilon_n)$ by Propositions \ref{prop:pendant}(iii) and \ref{prop:maxdeg}(iii). This gives $a=0$ and $b=\Delta(\Upsilon_n)-1=2^{k-1}-2$ and so the claim follows. Assume that none of $x,y$ is a pendant vertex. Then none of $x,y$ is a prime. Since $n=xy$ and $n$ is a product of distinct primes, there exist distinct primes $p_i,p_j$ each dividing $x$ but not $y$, and there exist distinct primes $p_s, p_t$ each dividing $y$ but not $x$. Since $\deg(x)=\pi(x)+1\leq 2^{k-2}-1$, we have $a\leq 2^{k-2}-2$. Similarly, $b\leq 2^{k-2}-2$. It follows that $a+b\leq 2^{k-1}-4< 2^{k-1}-1$.

\hskip 1cm

\noindent {\bf Addresses:}\\
\noindent {\bf Hitesh Kumar, Kamal Lochan Patra, Binod Kumar Sahoo}
\begin{enumerate}
\item[1)] School of Mathematical Sciences, National Institute of Science Education and Research (NISER), Bhubaneswar, P.O.- Jatni, District- Khurda, Odisha-752050, India

\item[2)] Homi Bhabha National Institute (HBNI), Training School Complex, Anushakti Nagar, Mumbai-400094, India
\end{enumerate}
\noindent E-mails: hitesh.kumar@niser.ac.in, klpatra@niser.ac.in, bksahoo@niser.ac.in
\end{document}